\newtheoremstyle{newremark}
  {5pt}
  {5pt}
  {\rmfamily}
  {}
  {\rmfamily\bf}
  {.}
  {.5em}
  {}
\newtheorem{theorem}{Theorem}
\newtheorem{lemma}[theorem]{Lemma}
\newtheorem{corollary}[theorem]{Corollary}
\newtheorem{proposition}[theorem]{Proposition}
\theoremstyle{newremark}
\newtheorem{remark}[theorem]{Remark}
\newtheorem*{definition*}{Definition} %no numbering for Theorem*
\newtheorem*{notations*}{Notations}
\numberwithin{theorem}{section}
\numberwithin{equation}{section}
\newcommand{\N}{\mathbb{N}} %natural numbers
\newcommand{\R}{\mathbb{R}} %real numbers
\def\XXint#1#2#3{{%
\setbox0=\hbox{$#1{#2#3}{\int}$}
\vcenter{\hbox{$#2#3$}}\kern-.5\wd0}}
\newcommand{\veps}{\varepsilon}
\renewcommand{\leq}{\leqslant}
\renewcommand{\geq}{\geqslant}
\renewcommand{\subset}{\subseteq}
\renewcommand{\supset}{\supseteq}
\newcommand{\res}{\mathop{\hbox{\vrule height 7pt width .5pt depth 0pt
\vrule height .5pt width 6pt depth 0pt}}\nolimits}
\newcommand{\eps}{\varepsilon}
\begin{document}

%=================
% TITLE AND AUTHOR
%=================

\title[Phase field approximation of the  Steiner problem]{On a phase field approximation of the planar Steiner problem: Existence, regularity, and asymptotic of minimizers}

\author{Matthieu Bonnivard}
\address{Laboratoire Jacques-Louis Lions (CNRS UMR 7598), Universit\'e Paris Diderot, Paris, France}
\email{bonnivard@ljll.univ-paris-diderot.fr}

\author{Antoine Lemenant}
\address{Laboratoire Jacques-Louis Lions (CNRS UMR 7598), Universit\'e Paris Diderot, Paris, France}
\email{lemenant@ljll.univ-paris-diderot.fr}

\author{Vincent Millot}
\address{Laboratoire Jacques-Louis Lions (CNRS UMR 7598), Universit\'e Paris Diderot, Paris, France}
\email{millot@ljll.univ-paris-diderot.fr}

%\date{\today}

%=========
% ABSTRACT
%=========

\begin{abstract}
In this article, we consider and analyse a small variant of a functional originally introduced in \cite{BLS,LS} to approximate the (geometric) planar Steiner problem. This functional depends on a small parameter $\eps>0$ and resembles  the (scalar) Ginzburg-Landau functional from phase transitions. In a first part, we prove existence and regularity of minimizers for this functional. Then we provide a detailed analysis of their behavior as $\eps\to0$, showing in particular  that sublevel sets Hausdorff converge to optimal Steiner sets. Applications to the average distance problem and optimal compliance are also discussed. 
\end{abstract}

%\keywords{keywords}

%\thanks{{\it Aknowledgements.} }

\maketitle

%==================
% TABLE OF CONTENTS
%==================

\tableofcontents

%%%%%%%%%%%%%%%%%%%%%%%%%%%%%%%%%%%%%%%%%%%%%%%%%%%%%%%%%%
%%%%%%%%%%%%%%%%%%%%%%%%%%%%%%%%%%%%%%%%%%%%%%%%%%%%%%%%%%

\section{Introduction}

%%%%%%%%%%%%%%%%%%%%%%%%%%%%%%%%%%%%%%%%%%%%%%%%%%%%%%%%%%
%%%%%%%%%%%%%%%%%%%%%%%%%%%%%%%%%%%%%%%%%%%%%%%%%%%%%%%%%%

 In its simplest version, the original (planar) Steiner problem consists  in finding, for a given collection of points $a_0,...,a_N \in \R^2$, a compact connected set $K\subset \R^2$ containing all the $a_i$'s and having minimal length. From the geometric analysis point of view, the Steiner problem can be seen as the one dimensional version of the (unoriented) Plateau problem, which consists in finding a (unoriented) surface of least area spanning a given boundary. Solutions to the Steiner problem exist and are usually not unique. However, every solution consists of a finite tree made of straight segments joining by number of three with 120\degree angles. This rigid structure allows one to reduce the Steiner problem to a discrete problem, but finding an exact  solution is  known to be computationally very hard:  it belongs to the original list of NP-complete problems proposed by Karp \cite{Karp}. And, obviously, the discrete approach is unadapted if one considers a perturbed version of the problem as it may arise in some models from continuum mechanics. 
 These facts motivate the development of specific analytic/geometric tools, and more precisely of approximation procedures that can be numerically implemented.   
 
 Concerning minimal boundaries (boundaries of least area), the typical oriented Plateau problem, such approximations are well known by now, the most common ones being the so-called {\sl phase field approximations}. They usually rely on the minimization of  an energy functional based on the van der Waals-Cahn-Hilliard  theory for phase transitions (see e.g. \cite{Gurt,Mod,ModMort}), explaining the terminology.  Applications of phase field methods to unoriented problems are more recent. The first one might be the Ambrosio-Tortorelli method \cite{ATo1,ATo2} used to approximate the Mumford-Shah functional from image segmentation \cite{MumSh}. Nowadays, the Mumford-Shah functional receives a lot of interest from the materials science  community, and the Ambrosio-Tortorelli approximation is, for instance,   heavily used to simulate crack propagation in elastic solids \cite{BFM1,BFM2}. 
 
For a long time, no phase field methods (for unoriented Plateau type problems) were designed to include  topological constraints such as connectedness. Only recently such a method has been suggested, first in \cite{LS}, and then in  \cite{BLS}, to approximate the planar Steiner problem and/or related minimization problems involving the length of connected sets. In \cite{DLS} the same approach has been successfully implemented (theoretically and numerically) to approximate the Willmore energy of connected curves or surfaces.  
 At the present time, two alternative (but  complementary) methods to solves the Steiner problem just appeared as preprints \cite{BOO,chamb}. 
 
 The main objective of this article is to complement the analysis initiated in \cite{BLS,LS} in the following way.  Although the $\Gamma$-convergence result of \cite{BLS,LS} proves that ``some approximate minimization problems'' indeed approximate the Steiner problem (or variants), existence of minimizers for the underlying functionals cannot be proved (at least easily), nor qualitative properties of ``almost'' minimizers. This is essentially due to the analytical complexity in the construction of those functionals. Here we introduce a tiny variant of  \cite{BLS,LS} with great benefits. In few words, we are able to prove for the new functional existence and regularity of minimizers, as well as a more precise description of their behavior in the singular limit.  Before going further, let us describe our results in detail. 
\vskip3pt

Consider a bounded and convex open set $\Omega_0\subset\R^2$. Given a nonnegative Borel measurable function $w:\overline\Omega_0\to[0,\infty)$, we define the (generalized) geodesic distance between two points $a,b\in\overline\Omega_0$ relative to the conformal metric $w$ to be  
$${\bf D}(w;a,b):=\inf_{\Gamma: a \leadsto b}\int_{\Gamma}w\,d\mathcal{H}^1 \in [0,+\infty]\,,$$
where $\Gamma: a\leadsto b$ means that $\Gamma$ is a rectifiable curve in  $\overline\Omega_0$ of finite length connecting $a$ and $b$ (i.e., $\Gamma$ a Lipschitz image of $[0,1]$ contained in $\overline\Omega_0$ running from $a$ to $b$). 

We fix a positive finite measure $\mu$ supported on $\overline\Omega_0$, a {\sl base point} $a_0\in\overline\Omega_0$, and a bounded smooth open set $\Omega\subset \R^2$ such that $\overline\Omega_0\subset\Omega$. For a given set of parameters $\eps,\lambda_\eps,\delta_\eps\in(0,1)$,  we consider   
the functional $F_\eps^\mu:H^1(\Omega)\cap L^\infty(\Omega)\to [0,+\infty)$ defined by 
$$F_\eps^\mu(u):=\eps\int_\Omega|\nabla u|^2\,dx+\frac{1}{4\eps}\int_{\Omega}(u-1)^2\,dx+\frac{1}{\lambda_\eps}\int _{\overline\Omega_0}{\bf D}\big(\delta_\eps+u^2;a_0,x\big)\,d\mu\,,$$
where, in the ${\bf D}$-term, $\delta_\varepsilon+u^2$ denotes the {\sl precise representative}  of the Sobolev function $\delta_\varepsilon+u^2\in W^{1,1}(\Omega)\cap L^\infty(\Omega)$. In this way, the value of ${\bf D}\big(\delta_\eps+u^2;a_0,x\big)$ only depends on $a_0$, $x$, and the equivalence class of $\delta_\varepsilon+u^2$. Moreover, the function 
$x\mapsto {\bf D}\big(\delta_\eps+u^2;a_0,x\big)$ turns out to be  $(\delta_\varepsilon+\|u\|^2_{L^\infty(\Omega)})$-Lipschitz continuous (see Remark \ref{remgeodist}), so that $F_\eps^\mu$ is well defined (or more precisely, its last term). 
\vskip3pt

We are interested in the minimization problem 
\begin{equation}
\min_{u\in 1+H^1_0(\Omega)\cap L^\infty(\Omega)} F^\mu_\eps(u)\,. \label{problemM1}
\end{equation}
Our first main result deals with existence and regularity of solutions. 

\begin{theorem}\label{thmmain1}
Problem \eqref{problemM1} admits at least one solution. In addition, any solution $u_\varepsilon$ belongs to $W^{1,p}(\Omega)$ for every $p<\infty$ (in particular, $u_\varepsilon\in C^{0,\alpha}(\Omega)$ for every $\alpha\in(0,1)$), and $0\leq u_\varepsilon\leq 1$. 
\end{theorem}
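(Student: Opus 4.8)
The plan is to obtain existence by the direct method and regularity through the Euler--Lagrange equation, the interface between the geodesic term and the Sobolev nature of $u$ being the recurring difficulty.

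\emph{Reduction and existence.} First I would note that truncating into $[0,1]$ never increases the energy: for $u\in 1+H^1_0(\Omega)\cap L^\infty(\Omega)$ the median $\widetilde u:=\min\{\max\{u,0\},1\}$ still equals $1$ on $\partial\Omega$, satisfies $|\nabla\widetilde u|\le|\nabla u|$ and $(\widetilde u-1)^2\le(u-1)^2$ a.e., and $\widetilde u^2\le u^2$ a.e.\ (hence for the precise representatives), so ${\bf D}(\delta_\eps+\widetilde u^2;a_0,x)\le {\bf D}(\delta_\eps+u^2;a_0,x)$ by monotonicity of the weighted geodesic distance in the weight; thus $F_\eps^\mu(\widetilde u)\le F_\eps^\mu(u)$, strictly (via the potential term) unless $0\le u\le1$ a.e. Hence the $L^\infty$-constraint is inactive and any solution obeys $0\le u_\eps\le1$. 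Restricting to competitors valued in $[0,1]$, a minimizing sequence $(u_n)$ is bounded in $H^1(\Omega)$ (the first two terms control $\|\nabla u_n\|_{L^2}$ and $\|u_n-1\|_{L^2}$), so $u_n\rightharpoonup u_\eps$ weakly in $H^1$, strongly in $L^2$, a.e., with $u_\eps\in 1+H^1_0(\Omega)$ and $0\le u_\eps\le1$; using $\|u_n\|_{L^\infty}\le1$ one also gets $w_n:=\delta_\eps+u_n^2\rightharpoonup w:=\delta_\eps+u_\eps^2$ weakly in $H^1$ with $\delta_\eps\le w_n\le\delta_\eps+1$. The first two terms of $F_\eps^\mu$ are lower semicontinuous, so everything reduces to the lower semicontinuity of $u\mapsto\int_{\overline\Omega_0}{\bf D}(\delta_\eps+u^2;a_0,x)\,d\mu$: for fixed $x$, take a ($w_n$-)geodesic $\Gamma_n$ from $a_0$ to $x$, bound its length by $\delta_\eps^{-1}{\bf D}(w_n;a_0,x)$, reparametrize at constant speed and extract via Arzel\`a--Ascoli a limit curve $\Gamma$ from $a_0$ to $x$ with $\mathcal H^1(\Gamma)\le\liminf\mathcal H^1(\Gamma_n)$, and then establish $\liminf_n\int_{\Gamma_n}w_n\,d\mathcal H^1\ge\int_\Gamma w\,d\mathcal H^1\ge{\bf D}(w;a_0,x)$; the passage to the limit in the line integral of a \emph{moving} curve is where the $H^1$-weak convergence of the weights must be used (compactness of the trace $H^1(\Omega)\to L^1$ onto curves, plus a tubular/Fubini argument localized near $\Gamma$), and is the main technical point here. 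Fatou's lemma in $x$ then gives lower semicontinuity of the $\mu$-average, so $u_\eps$ is a minimizer.

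\emph{Euler--Lagrange equation.} For the minimizer $u_\eps$ and each $x\in\overline\Omega_0$, choose a $(\delta_\eps+u_\eps^2)$-geodesic $\Gamma_x\subset\overline\Omega_0$ from $a_0$ to $x$. The elementary bound ${\bf D}(\delta_\eps+v^2;a_0,x)\le\int_{\Gamma_x}(\delta_\eps+v^2)\,d\mathcal H^1$ yields the supergradient inequality
\[
\int_{\overline\Omega_0}{\bf D}(\delta_\eps+v^2;a_0,x)\,d\mu\ \le\ \int_{\overline\Omega_0}{\bf D}(\delta_\eps+u_\eps^2;a_0,x)\,d\mu\ +\ \int_\Omega(v^2-u_\eps^2)\,d\nu_\eps ,
\]
where $\nu_\eps(A):=\int_{\overline\Omega_0}\mathcal H^1(\Gamma_x\cap A)\,d\mu(x)$ is a \emph{finite} positive measure, of mass $\le\delta_\eps^{-1}\int_{\overline\Omega_0}{\bf D}(\delta_\eps+u_\eps^2;a_0,x)\,d\mu$. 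Inserting $v=u_\eps+t\phi$ with $\phi\in C_c^\infty(\Omega)$ into $F_\eps^\mu(u_\eps)\le F_\eps^\mu(u_\eps+t\phi)$, dividing by $t$ and letting $t\to0^\pm$, gives
\[
-2\eps\,\Delta u_\eps+\tfrac1{2\eps}(u_\eps-1)+\tfrac2{\lambda_\eps}\,u_\eps\,\nu_\eps=0\qquad\text{in }\mathcal D'(\Omega).
\]
In particular $\Delta u_\eps=\tfrac1{4\eps^2}(u_\eps-1)+\tfrac1{\eps\lambda_\eps}u_\eps\nu_\eps$ is a finite measure, with $\Delta u_\eps\ge-\tfrac1{4\eps^2}$ (since $u_\eps\nu_\eps\ge0$), so $u_\eps$ has an upper semicontinuous representative and already $u_\eps\in W^{1,q}_{\mathrm{loc}}(\Omega)$ for $q<2$.

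\emph{Regularity.} The crucial geometric observation is that, $\Omega_0$ being convex and $\delta_\eps\le\delta_\eps+u_\eps^2\le\delta_\eps+1$, every $(\delta_\eps+u_\eps^2)$-geodesic $\Gamma$ is a \emph{chord-arc arc} with constant $\tfrac{\delta_\eps+1}{\delta_\eps}$: the subarc of $\Gamma$ between any of its points $p,q$ is again a geodesic, so its length is $\le\delta_\eps^{-1}{\bf D}(\delta_\eps+u_\eps^2;p,q)\le\tfrac{\delta_\eps+1}{\delta_\eps}|p-q|$, comparing with the segment $[p,q]\subset\overline\Omega_0$. Hence $\nu_\eps$ has linear growth $\nu_\eps(B_r(y))\le C_\eps\,r$ (for each $x$, $\Gamma_x\cap B_r(y)$ lies in the subarc between the first entrance and the last exit, of length $\le\tfrac{2(\delta_\eps+1)}{\delta_\eps}r$), and, more importantly, the Newtonian (logarithmic) potential of $u_\eps\,\mathcal H^1\!\res\!\Gamma_x$ --- a bounded density on a uniformly chord-arc arc contained in the fixed compact $\overline\Omega_0$ --- lies in $W^{1,p}(\Omega)$ for every $p<\infty$, with norm bounded uniformly in $x$ (Calder\'on--Zygmund/Cauchy-transform estimates on chord-arc curves). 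Integrating in $x$ against $\mu$, the Newtonian potential of $u_\eps\nu_\eps$ belongs to $W^{1,p}(\Omega)$ for all $p<\infty$; combining with the $C^{1,\alpha}$-regularity ($\forall\alpha<1$) of the Newtonian potential of the bounded term $\tfrac1{4\eps^2}(u_\eps-1)$ and writing $u_\eps$ as the sum of these two potentials and a harmonic function (adjusted to the boundary datum $u_\eps=1$ on $\partial\Omega$), one gets $u_\eps\in W^{1,p}(\Omega)$ for every $p<\infty$, hence $u_\eps\in C^{0,\alpha}(\Omega)$ for every $\alpha\in(0,1)$ by the Morrey embedding.

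\emph{Expected main obstacle.} The delicate points are the two places where geodesics meet the merely Sobolev regularity of the weight. In the existence step it is the passage to the limit in line integrals over moving curves for weakly $H^1$-convergent weights. In the regularity step it is twofold: making the existence and the chord-arc property of $(\delta_\eps+u_\eps^2)$-geodesics rigorous --- which, a priori, may require first bootstrapping the continuity of $u_\eps$ so that $\delta_\eps+u_\eps^2$ is a genuine continuous metric --- and establishing the single-layer potential estimate \emph{uniformly in $x$} so that it survives integration against $\mu$. Everything else is the direct method together with standard elliptic/potential theory.
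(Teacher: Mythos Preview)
Your proposal takes a fundamentally different route from the paper, and both of your ``expected main obstacles'' are genuine gaps that the paper circumvents rather than confronts.

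\textbf{On existence.} The lower semicontinuity of $u\mapsto\int_{\overline\Omega_0}{\bf D}(\delta_\eps+u^2;a_0,x)\,d\mu$ along weakly $H^1$-convergent sequences is precisely the issue flagged in the introduction: for the original functional $\widetilde F^\mu_\eps$ of \cite{BLS,LS} such lower semicontinuity is unknown, and the modification to $F^\mu_\eps$ is \emph{not} designed to restore it. Your sketch (``trace compactness onto curves, plus a tubular/Fubini argument'') does not settle the matter: the curves $\Gamma_n$ are moving with $n$ and are chosen adversarially (as geodesics for $w_n$), so there is no fixed curve onto which to take traces, and a Fubini-over-tubes argument faces the same difficulty since $\Gamma_n$ may sit exactly on the slice where $w_n$ is smallest.

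\textbf{On regularity.} Your supergradient/Euler--Lagrange derivation and the chord-arc observation are correct and elegant, but they presuppose the existence of genuine $(\delta_\eps+u_\eps^2)$-geodesics for a weight that is a priori only the precise representative of a $W^{1,1}$-function. The paper never invokes geodesics before continuity of the weight is available (see Lemma~\ref{geodlem}, which assumes $u\in C^1$).

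\textbf{What the paper does instead.} For finitely supported $\mu$, the problem is lifted to the coupled functional $E^\mu_\eps(u,\overrightarrow{\boldsymbol{\gamma}})$ of \eqref{defEmu}--\eqref{relEF}, with the curves as unknowns. Because $E^\mu_\eps(\cdot,\overrightarrow{\boldsymbol{\gamma}})$ is \emph{quadratic} in $u$ (this is why $u^2$ replaces $u$), the potential $u_{\overrightarrow{\boldsymbol{\gamma}}}$ solves a \emph{linear} elliptic equation, and one gets a priori $W^{1,p}$ and $C^{0,\alpha}$ bounds depending only on the Ahlfors constant of the curves (Propositions~\ref{W1p} and~\ref{Holder}). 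A replacement lemma (Lemma~\ref{replace}) forces the curves of any near-minimizing pair to be Ahlfors regular with explicit constant $\Lambda_\eps$. Along a minimizing sequence one may therefore pass to the potentials, which are \emph{uniformly} H\"older; compactness comes from Arzel\`a--Ascoli, and the line-integral lower semicontinuity reduces to the elementary result of \cite{MaSb} for continuous integrands (proof of Theorem~\ref{existminpairs}). Regularity of an arbitrary minimizer of $F^\mu_\eps$ is then obtained a posteriori (Corollary~\ref{thmminF}). For general $\mu$, one approximates by finitely supported measures and uses the uniform $C^{0,\alpha}$ bounds to pass to the limit (Theorems~\ref{thmexisgen} and~\ref{mainmain}); note the uniqueness trick in Theorem~\ref{mainmain}, where $\frac14\|u-u_*\|_{L^2}^2$ is added to force the approximating minimizers to converge to the given $u_*$.

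In short, the paper reverses your order of operations: regularity is obtained \emph{a priori} via the lifted functional and its linear Euler--Lagrange equation, and existence is then proved by compactness in $C^{0,\alpha}$ rather than weak $H^1$. Your single-layer estimate and chord-arc bound are morally the same as Proposition~\ref{W1p} and Lemma~\ref{replace}, but the paper applies them to curves that are variables of the lifted problem, not to geodesics of a not-yet-continuous minimizer.
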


Let us mention that the regularity above is essentially sharp in the sense that $u_\varepsilon$ is in general {\sl not} Lipschitz continuous globally in $\Omega$ (see Remarks \ref{counterex} \& \ref{regC1alphaCurve}). In the case where ${\rm spt}\,\mu$ is finite, we shall see that $u_\varepsilon$ is in fact $C^\infty$ away from finitely many $C^{1,\alpha}$-curves connecting $a_0$ to ${\rm spt}\,\mu$ (given by minimizing geodesics for the distance ${\bf D}\big(\delta_\eps+u_\eps^2)$).  
\vskip3pt

We now describe the asymptotic behavior of minimizers of $F^\mu_\eps$ as $\varepsilon\to 0$. For this issue, we shall assume (for simplicity) that the two parameters 
$\lambda_\varepsilon$ and $\delta_\varepsilon$ satisfy the following relation: 
\begin{equation}\label{deppara}
\lambda_\varepsilon\mathop{\longrightarrow}\limits_{\varepsilon\to 0} 0\quad\text{and}\quad \delta_{\varepsilon}=\lambda^\beta_\varepsilon\;\text{ for some $\beta\in(1,2)$}\,.
\end{equation}
Provided that $\mathcal{H}^1({\rm spt}\,\mu)<\infty$, our second main result shows that sublevel sets of minimizers converge to a solution of the generalized Steiner probem
\begin{equation}\label{classStein}
\min\Big\{\mathcal{H}^1(K): K\subset\R^2 \text{ compact and connected, } K\supset\{a_0\}\cup{\rm spt}\,\mu \Big\} \,.
\end{equation}
Note that for $\mu=\sum_{i=0}^N\delta_{a_i}$ and some distinct points $a_i\in\overline\Omega_0$, problem \eqref{classStein} coincides with the classical Steiner problem described previously.

\begin{theorem}\label{thmmain2} 
Assume that ${\rm spt}\,\mu$ is not reduced to $\{a_0\}$ and that $\mathcal{H}^1({\rm spt}\,\mu)<\infty$.  Assume also that \eqref{deppara} holds. Let $\varepsilon_k\downarrow0$ and $\{u_k\}_{k\in\mathbb{N}}\subset 1+H^1_0(\Omega)$ be such that 
$$ F^\mu_{\varepsilon_k}(u_k)=\min_{1+H_0^1(\Omega)}F^\mu_{\varepsilon_k}\quad\text{for each $k\in\mathbb{N}$}\,.$$
There exist a (not relabeled) subsequence and a compact connected set $K_*\subset \overline\Omega_0$ such that $\{u_k \leq t \}\to K_*$ in the Hausdorff  sense for every $t\in(0,1)$. 
In addition, $K_*$ solves the Steiner problem \eqref{classStein} relative to $\{a_0\}\cup{\rm spt}\,\mu\,$,  and the following holds: 
\begin{enumerate}
\item[(i)] $F_{\varepsilon_k}^\mu(u_k)\to \mathcal{H}^1(K_*)$; 
\vskip3pt

\item[(ii)] ${\bf D}\big(\delta_{\varepsilon_k}+u_k^2 ;a_0,x\big)\to {\rm dist}(x,K_*)$ uniformly on $\overline\Omega_0$;
\vskip3pt

\item[(iii)]  $u_k\to 1$ in $C^{2}_{\rm loc}(\overline\Omega\setminus K_*)$. 
\end{enumerate}
\end{theorem}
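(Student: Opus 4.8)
\emph{Overall plan.} The proof has two halves: a $\Gamma$-convergence part — a sharp recovery upper bound, and a matching lower bound in which connectedness of the limit set is forced by the $\mathbf D$-term, in the spirit of \cite{BLS,LS} — followed by a finer analysis of the Euler--Lagrange equation (available through Theorem~\ref{thmmain1}) that yields the strong convergences. Throughout I use $0\le u_k\le 1$ and that each $h_k:=\mathbf D(\delta_{\eps_k}+u_k^2;a_0,\cdot)$ is $2$-Lipschitz on $\overline\Omega_0$ (Remark~\ref{remgeodist}).

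\emph{Step 1: upper bound and compactness.} Since $\Omega_0$ is convex, projection onto $\overline\Omega_0$ is $1$-Lipschitz and does not increase $\mathcal H^1$, so \eqref{classStein} admits a solution $K_0\subset\overline\Omega_0$ (finiteness of the infimum from $\mathcal H^1(\mathrm{spt}\,\mu)<\infty$, existence from lower semicontinuity of $\mathcal H^1$ along Hausdorff-converging continua). As $K_0$ is compactly contained in $\Omega$, I would take the optimal one-dimensional profile $g\colon[0,\infty)\to[0,1]$ of $W(s)=\tfrac14(1-s)^2$ (so $g(0)=0$, $g(+\infty)=1$, $g'=\sqrt{W(g)}$) and set $v_k:=g\big(\mathrm{dist}(\cdot,K_0)/\eps_k\big)$, glued to $1$ on a fixed annular neighborhood of $\partial\Omega$ at the cost of an exponentially small error, so that $v_k\in 1+H^1_0(\Omega)$ and $0\le v_k\le 1$. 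The coarea formula gives that the Modica--Mortola part of $F^\mu_{\eps_k}(v_k)$ tends to $\mathcal H^1(K_0)$ (the transverse profile $1\!\to\!0\!\to\!1$ carrying unit cost per unit length of $K_0$); and since $v_k\equiv 0$ on $K_0\supset\mathrm{spt}\,\mu$ and $K_0$ is arcwise connected with $\mathcal H^1(K_0)<\infty$, one has $\mathbf D(\delta_{\eps_k}+v_k^2;a_0,x)\le\delta_{\eps_k}\mathcal H^1(K_0)$ for every $x\in\mathrm{spt}\,\mu$, so the last term of $F^\mu_{\eps_k}(v_k)$ is at most $\lambda_{\eps_k}^{\beta-1}\mathcal H^1(K_0)\,\mu(\overline\Omega_0)\to 0$ because $\beta>1$. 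By minimality $\limsup_k F^\mu_{\eps_k}(u_k)\le\mathcal H^1(K_0)$, hence $F^\mu_{\eps_k}(u_k)\le\Lambda:=\mathcal H^1(K_0)+1$ for $k$ large. This bound yields $\|1-u_k\|_{L^2(\Omega)}^2\le 4\eps_k\Lambda\to0$; the BV coarea formula applied to $\tfrac12(1-u_k)^2$ (of bounded variation) controls $\int_0^1(1-t)\mathcal H^1(\partial^*\{u_k<t\})\,dt$; and $\int_{\overline\Omega_0}h_k\,d\mu\le\Lambda\lambda_{\eps_k}\to0$. Being equi-$2$-Lipschitz and bounded, the $h_k$ converge, up to a subsequence, uniformly on $\overline\Omega_0$ to a Lipschitz $h_*$; set $K_*:=\{h_*=0\}$. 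A Chebyshev estimate on $\int_{B_\rho(x_0)}h_k\,d\mu\to0$ combined with equi-Lipschitz continuity (let $k\to\infty$, then $\rho\downarrow0$) gives $\{a_0\}\cup\mathrm{spt}\,\mu\subset K_*$. Finally I pass $e_k:=\big(\eps_k|\nabla u_k|^2+\tfrac1{4\eps_k}(1-u_k)^2\big)\mathcal L^2$ (restricted to $\Omega$) to a weak-$*$ limit $e$, so that $e(\overline\Omega)\le\liminf_k F^\mu_{\eps_k}(u_k)$.

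\emph{Step 2: lower bound and connectedness — the main obstacle.} The heart of the argument is to prove
\[
\liminf_k F^\mu_{\eps_k}(u_k)\ \ge\ \mathcal H^1(K_*),\qquad K_*\ \text{compact and connected}.
\]
Combined with Step~1 and $\{a_0\}\cup\mathrm{spt}\,\mu\subset K_*$, this forces $K_*$ to solve \eqref{classStein} and gives $F^\mu_{\eps_k}(u_k)\to\mathcal H^1(K_*)$, i.e.\ (i), as well as $e=\mathcal H^1$ restricted to $K_*$ and $\mathrm{spt}\,e=K_*$ (the total mass saturates). This is essentially the $\Gamma$-liminf of \cite{BLS,LS} carried over to the present variant, and it is where all the difficulty sits, because one must play three vanishing quantities against each other: the limiting Modica--Mortola measure $e$; the measure $\nu_k$ carried by the minimizing geodesics of $h_k$ — on whose union the first variation of the $\mathbf D$-term is supported, so that the Euler--Lagrange equation reads $-2\eps_k\Delta u_k+\tfrac1{2\eps_k}(u_k-1)+\tfrac{2u_k}{\lambda_{\eps_k}}\nu_k=0$; and the distances $h_k$. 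The single technical result I expect to be hardest — exactly as in \cite{BLS,LS} — is a slicing estimate asserting that \emph{a rectifiable curve along which $u_k$ stays below $1/2$, sitting in a region where $1-u_k$ is small in $L^2$, forces an amount of Modica--Mortola energy proportional to its length}. Granting it, the rest is bookkeeping: a localized Modica--Mortola lower bound together with the perimeter control of Step~1 gives that $\Sigma:=\mathrm{spt}\,e$ is $\mathcal H^1$-rectifiable, $\mathcal H^1(\Sigma)<\infty$, $e\ge\mathcal H^1$ restricted to $\Sigma$; since $\mathcal H^1(\Gamma_{k,x}\cap\{u_k>1/2\})\le 4h_k(x)\to0$ for $x\in\mathrm{spt}\,\mu$ (as $h_*(x)=0$ and $h_k\to h_*$ uniformly), each geodesic $\Gamma_{k,x}$ is, up to vanishing length, contained in $\{u_k\le1/2\}$, so by the slicing estimate its portions at a fixed positive distance from $\Sigma$ have vanishing length — whence, being connected with an endpoint $a_0$ and reaching $x$, it is confined to any preassigned neighborhood of $\Sigma$ for $k$ large; the same slicing estimate applied near $a_0$ and near each point of $\mathrm{spt}\,\mu$ forces $\{a_0\}\cup\mathrm{spt}\,\mu\subset\Sigma$; and $\Sigma$ is connected — otherwise the geodesic realizing $h_k(x_0)$ for $x_0\in\mathrm{spt}\,\mu$ in a component other than that of $a_0$ would have to cross a corridor disjoint from $\Sigma$ over a definite length while staying (up to vanishing length) in $\{u_k\le1/2\}$, depositing, by the slicing estimate, a definite amount of $e$-mass in a set disjoint from $\mathrm{spt}\,e$, which is absurd. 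Identifying $\Sigma=K_*$ (both inclusions via the same distance/geodesic bookkeeping) closes the step. The restriction $1<\beta<2$ in \eqref{deppara} enters here: $\beta>1$ to kill the $\mathbf D$-term in the upper bound (and to make $\lambda_{\eps_k}^{\beta-1}\to0$), and $\beta<2$, i.e.\ $\delta_{\eps_k}\gg\lambda_{\eps_k}^2$, to control the length of minimizing geodesics against the Modica--Mortola scale in these estimates.

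\emph{Step 3: the strong convergences.} All of this rests on the geodesic confinement $\Gamma_{k,x}\subset N_r(K_*)$ (the $r$-neighborhood) for $x\in\mathrm{spt}\,\mu$ and $k$ large, established in Step~2. On a compact $Q\subset\overline\Omega\setminus K_*$ one then has $\nu_k=0$ on a neighborhood of $Q$ for $k$ large, so $w_k:=1-u_k\in[0,1]$ satisfies $\Delta w_k=\tfrac1{4\eps_k^2}w_k\ge0$ there; being nonnegative, subharmonic, and tending to $0$ in $L^1_{\mathrm{loc}}$, it tends to $0$ uniformly on $Q$, and comparison with exponential barriers upgrades this to exponentially fast convergence of $w_k$ and of its derivatives, so $u_k\to1$ in $C^2_{\mathrm{loc}}(\overline\Omega\setminus K_*)$ — this is (iii); near $\partial\Omega$ one has $u_k\equiv1$. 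For the Hausdorff convergence $\{u_k\le t\}\to K_*$, $t\in(0,1)$: if some $x_k\in\{u_k\le t\}$ stayed at distance $\ge\rho$ from $K_*$, then (iii) would give $u_k(x_k)\to1>t$, impossible, so $\{u_k\le t\}\subset N_\rho(K_*)$ for $k$ large; conversely, for $p\in K_*$ we have $h_k(p)\to0$, so a minimizing geodesic $\Gamma_{k,p}$ from $a_0$ to $p$ has $\int_{\Gamma_{k,p}}(\delta_{\eps_k}+u_k^2)\to0$, and were $u_k>t$ on all of $B_\rho(p)$ the sub-arc of $\Gamma_{k,p}$ inside $B_\rho(p)$ — of length $\ge\rho/2$, as it reaches the center from outside (using a far point of $\mathrm{spt}\,\mu$ in place of $p$ when $p=a_0$) — would contribute at least $t^2\rho/2$, a contradiction, so $K_*\subset N_\rho(\{u_k\le t\})$. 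Finally (ii): $h_*\le\mathrm{dist}(\cdot,K_*)$ by routing a competitor along $K_*$ through the shadowing sublevel sets and then straight to the point (using $\mathcal H^1(K_*)<\infty$ and that $u_k$ is small along those sets), while $h_*\ge\mathrm{dist}(\cdot,K_*)$ because an almost optimal curve for $h_k(x)$ can lie outside $N_r(K_*)$ — where $u_k\to1$ — only over vanishing length, hence travels Euclidean distance $\ge\mathrm{dist}(x,K_*)-r$ there; letting $r\downarrow0$ gives $h_*=\mathrm{dist}(\cdot,K_*)$, uniformly on $\overline\Omega_0$ by equicontinuity. This yields (ii) and completes the proof.
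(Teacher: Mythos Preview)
Your overall architecture (upper bound $\to$ lower bound with connectedness $\to$ equation away from $K_*$ $\to$ strong convergences) matches the paper's. The main point where you diverge is Step~2, and there you work much harder than the paper does. The paper does \emph{not} re-run the \cite{BLS,LS} liminf argument with slicing estimates, energy measures $e$, and an identification $\Sigma=K_*$. Instead it observes the elementary inequality
\[
2\sqrt{\delta_{\eps_k}}\,u_k \;\le\; \delta_{\eps_k}+u_k^2,
\]
which yields $\mathbf D(u_k;a_0,x)\le (2\sqrt{\delta_{\eps_k}})^{-1}\mathbf D(\delta_{\eps_k}+u_k^2;a_0,x)$, hence
\[
\frac{1}{\alpha_k}\int_{\overline\Omega_0}\mathbf D(u_k;a_0,x)\,d\mu\;\le\;F^\mu_{\eps_k}(u_k),\qquad \alpha_k:=\frac{\lambda_{\eps_k}}{2\sqrt{\delta_{\eps_k}}}.
\]
The condition $\beta<2$ is precisely $\alpha_k\to0$, so Lemma~\ref{lemliminf} (i.e.\ \cite[Lemma~3.1]{BLS}) applies \emph{verbatim} to $v_k=u_k$, giving at once that $K_*:=\{h_*=0\}$ is compact, connected, contains $\{a_0\}\cup\mathrm{spt}\,\mu$, and $\mathcal H^1(K_*)\le\liminf_k F^\mu_{\eps_k}(u_k)$. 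Your sketch of the slicing/energy-concentration argument and the identification $\Sigma=K_*$ is therefore unnecessary; it is also where your write-up is least complete (the ``bookkeeping'' hides a fair amount).

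A second difference: you invoke a global Euler--Lagrange equation with a measure $\nu_k$ carried by minimizing geodesics. For a general $\mu$ this object has not been constructed in the paper, and its existence is not obvious. The paper instead argues variationally: once the near-optimal geodesics for $h_k$ are shown to lie in a tube $V_{r/2}$ around $K_*$ (this follows directly from uniform convergence $h_k\to h_*$ and the fact that $h_k$ is monotone along any curve from $a_0$), any compactly supported perturbation in $\Omega\setminus\overline V_{r/2}$ leaves the $\mathbf D$-term unchanged, so minimality yields $-\eps_k^2\Delta u_k=\tfrac14(1-u_k)$ there in the weak sense. From that point your Step~3 and the paper's Steps~4--5 coincide. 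Finally, your argument for (ii) is more elaborate than needed: $h_*\le\mathrm{dist}(\cdot,K_*)$ is immediate since $h_*$ is $1$-Lipschitz and vanishes on $K_*$; for the reverse inequality the paper uses the Hausdorff convergence of $\{u_k\le t\}$ to bound $u_k\ge t$ on $\overline B(x,\tau)$ for $\tau<\mathrm{dist}(x,K_*)$, forcing $h_k(x)\ge t^2\tau$. Your phrasing ``can lie outside $N_r(K_*)$ only over vanishing length'' is not what is happening and should be replaced by this simpler estimate.
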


In proving this theorem, we make use of the main result in \cite{BLS,LS} that we now briefly present. The original functional introduced in  \cite{BLS,LS}   is (essentially)  
$\widetilde F^\mu_\eps:1+H_0^1(\Omega)\cap C^0(\overline\Omega)\to [0,\infty)$ given by
\begin{equation}\label{defftild}
\widetilde F^\mu_\eps(u):=\begin{cases}
\displaystyle \eps\int_\Omega|\nabla u|^2\,dx+\frac{1}{4\eps}\int_\Omega(1-u)^2\,dx+\frac{1}{\lambda_\eps}\int_{\overline\Omega_0} {\bf D}(u;a_0,x)\,d\mu & \text{if $0\leq u\leq 1$}\,,\\[8pt]
+\infty & \text{otherwise}\,.
\end{cases}
\end{equation}
As explained \cite[Section 5.4]{BLS}, the possible lack of lower semicontinuity of $\widetilde F^\mu_\eps$ prevents one to prove existence of minimizers (at least easily -- and existence is still unknown\footnote{We learned from Dorin Bucur that the recent preprint \cite{Bucur} contains results solving some lower semicontinuity issues in a similar direction.}).  
The main result of \cite{BLS,LS} is of $\Gamma$-convergence nature, and shows the two following  facts: {\it (1)} if a sequence $\{v_\eps\}$ satisfies $\widetilde F^\mu_\eps(v_\eps)=O(1)$, then $x\mapsto {\bf D}(v_\eps;a_0,x)$ (sub-)converges uniformly as $\eps\to 0$ to some function ${\bf d}_*$, $\{{\bf d}_*=0\}$ is a compact connected set containing $\{a_0\}\cup{\rm spt}\,\mu$, and $\mathcal{H}^1(\{{\bf d}_*=0\})\leq \liminf_\eps \widetilde F^\mu_\eps(v_\eps)$;  {\it (2)} for every compact connected set $K$ containing $\{a_0\}\cup{\rm spt}\,\mu$, there exists a sequence $\{w_\eps\}$ of functions of finite $\widetilde F^\mu_\eps$-energy  satisfying  $\limsup_\eps \widetilde F^\mu_\eps(w_\eps)\leq \mathcal{H}^1(K)$. In particular, if the sequence $\{v_\eps\}$ is ``almost'' minimizing in the sense that $\widetilde F^\mu_\eps(v_\eps)-\inf \widetilde F^\mu_\eps=o(1)$, then the set $\{{\bf d}_*=0\}$ solves the Steiner problem \eqref{classStein}, and $\widetilde F^\mu_\eps(v_\eps)\to \mathcal{H}^1(\{{\bf d}_*=0\})$. 

In conclusion, the main contribution of Theorem \ref{thmmain2} is the Hausdorff convergence of the sublevel sets $\{u_\eps\leq t\}$, the convergence estimate away from the limiting Steiner set, and the identification of the limiting function ${\bf d}_*$. Compare to $\widetilde F^\mu_\eps$, this is made possible by introducing the additional parameter $\delta_\eps$ and replacing $u$ by $u^2$ in the ${\bf D}$-term.  The parameter $\delta_\eps$, already suggested in \cite{BLS}, can be seen as an {\sl elliptic regularisation term}. In turn, the term $u^2$ is the key new ingredient which allows to get a {\sl linear elliptic equation} for $u_\eps$ (at least if ${\rm spt}\,\mu$ is finite). A large part of the arguments used to prove both Theorem \ref{thmmain1} and Theorem \ref{thmmain2} rests on this equation and rather classical {\sl linear estimates}. The introduction of the ``safety zone'' $\Omega\setminus\overline\Omega_0$ (not present in \cite{BLS})  is just a convenient way to avoid boundary effects, and has no other importance.  Finally, we impose relation  \eqref{deppara} between $\lambda_\eps$ and $\delta_\eps$ for the following reason: on one hand the condition $\delta_\eps=o(\lambda_\eps)$ is necessary to derive the Steiner problem in the limit; on the other hand the condition $\lambda^2_\eps=o(\delta_\eps)$ allows us to use \cite{BLS} in a straightforward way, even if it is probably unnecessary. 
\vskip3pt

We close this introduction mentioning our companion paper \cite{BLM2}, second part of our work, where we consider the minimization of a discretized version $F^\mu_\eps$ based on finite $\mathcal{P}^1$-elements. A special attention will be devoted on how to handle the ${\bf D}$-term in this discrete framework. 
Using the material of this paper, we will  be able to determine explicit estimates on the grid size in terms of $\eps$ to ensure the convergence of discrete  minimizers  to Steiner sets,  in the spirit of Theorem  \ref{thmmain2}. 
\vskip3pt

This paper is organized as follows. In Section \ref{newsection},  we consider the case where $\mu$ has a finite support. We start establishing a priori estimates leading to existence and (as a byproduct) regularity of minimizers (see Corollary \ref{thmminF}). The case of a general measure $\mu$ is treated in Section \ref{genmeassec} through an approximation argument using  finitely supported measures. In Subsection \ref{AverdistCompsec}, we apply our existence theory for $F^\mu_\eps$ to prove existence of minimizers for functionals introduced in \cite{BLS} (and accordingly modified here) to approximate the average distance and compliance problems. Theorem \ref{thmmain2} is finally proved in Section \ref{sectasympt}.

%%%%%%%%%%%%%%%%%%%%%%%%%%%%%%%%%%%%%%%%%%%%%%%%%%%%%%%%%%
%%%%%%%%%%%%%%%%%%%%%%%%%%%%%%%%%%%%%%%%%%%%%%%%%%%%%%%%%%

\section{Existence and regularity for measures with finite support} \label{newsection}     

%%%%%%%%%%%%%%%%%%%%%%%%%%%%%%%%%%%%%%%%%%%%%%%%%%%%%%%%%%
%%%%%%%%%%%%%%%%%%%%%%%%%%%%%%%%%%%%%%%%%%%%%%%%%%%%%%%%%%

Throughout this section, we assume that the  measure $\mu$ has finite support, i.e.,  
\begin{equation}\label{discrmeas}
\mu=\sum_{i=1}^N\beta_i\,\delta_{a_i} 
\end{equation}
for some distinct points $a_1,\ldots,a_N\in\overline{\Omega}_0$ and coefficients $\beta_i>0$.  We fix a {\bf base point} $a_0\in\overline\Omega_0$ (possibly equal to one of the $a_i$'s), and to the resulting collection of points, we associate the following space of Lipschitz curves
$$\mathscr{P}(a_0,\mu):=\Big\{\overrightarrow{\boldsymbol{\gamma}}=(\gamma_i)_{i=1}^N: \gamma_i\in \mathscr{P}(a_0,a_i)\Big\}\,,$$ 
where we have set 
$$\mathscr{P}(a,b):=\Big\{\gamma\in{\rm Lip}([0,1];\overline\Omega_0): \gamma(0)=a\text{ and }\gamma(1)=b\Big\}\,. $$
We endow $\mathscr{P}(a_0,\mu)$ with the topology of uniform convergence. In this way, $\mathscr{P}(a_0,\mu)$ appears to be a subset of the complete metric space   $[C^0([0,1];\overline\Omega_0)]^N$. For  $\overrightarrow{\boldsymbol{\gamma}}\in \mathscr{P}(a_0,\mu)$, we write 
$$\Gamma(\gamma_i):=\gamma_i([0,1])\quad\text{and}\quad\Gamma({\overrightarrow{\boldsymbol{\gamma}}}):=\bigcup_{i=1}^N\gamma_i([0,1])\,. $$
For a given $\overrightarrow{\boldsymbol{\gamma}}\in \mathscr{P}(a_0,\mu)$, we consider the functional $E^\mu_\varepsilon(\cdot,\overrightarrow{\boldsymbol{\gamma}}):H^1(\Omega)\to [0,+\infty]$ defined by 
\begin{equation}\label{defEmu}
E^\mu_\varepsilon(u,\overrightarrow{\boldsymbol{\gamma}}):=\eps\int_\Omega|\nabla u|^2\,dx+\frac{1}{4\eps}\int_{\Omega}(u-1)^2\,dx+\frac{1}{\lambda_\eps}\sum_{i=1}^N\beta_i\int_{\Gamma(\gamma_i)}(\delta_\varepsilon+u^2)\,d\mathcal{H}^1\,,
\end{equation}
where each term $\int_{\Gamma(\gamma_i)}(\delta_\varepsilon+u^2)\,d\mathcal{H}^1$ is understood as the integration of the precise representative of $\delta_\varepsilon+u^2$ with respect to the measure $\mathcal{H}^1\res \Gamma(\gamma_i)$, see Subsection \ref{SecPrecRep} below. 

By the very definition of $F^\mu_\varepsilon$, the functional $E^\mu_\varepsilon$ relates to $F^\mu_\varepsilon$ through the formula 
\begin{equation}\label{relEF}
F^\mu_\varepsilon(u)=\inf_{\overrightarrow{\boldsymbol{\gamma}}\in \mathscr{P}(a_0,\mu)}E^\mu_\varepsilon(u,\overrightarrow{\boldsymbol{\gamma}})\qquad \forall u\in H^1(\Omega)\cap L^\infty(\Omega)\,.
\end{equation}
As we shall see, this identity is the key ingredient to investigate existence and regularity of minimizers of $F^\mu_\varepsilon$. In the same spirit, we also consider the functional 
  $G^\mu_\varepsilon:\mathscr{P}(a_0,\mu)\to [0,+\infty)$ defined by
\begin{equation}\label{defGmu}
G^\mu_\varepsilon(\overrightarrow{\boldsymbol{\gamma}}):= \inf_{u\in 1+H^1_0(\Omega)}  
 E^\mu_\varepsilon(u,\overrightarrow{\boldsymbol{\gamma}}) \,,
\end{equation}
and prove existence of minimizers. 

\subsection{The precise representative of a Lebesgue function}\label{SecPrecRep} 

The object of this subsection is to summarize some basic facts concerning the precise representative of a function, and their implications for the generalized geodesic distance. In doing so, we consider an open set $U\subset \mathbb{R}^n$. For $v\in L^1_{\rm loc}(U)$, the value of the precise representative of $v$ at $x\in U$ is defined by 
$$v^*(x):=\begin{cases}
\displaystyle \lim_{r\downarrow0}\fint_{B(x,r)}v(y)\,dy & \text{if the limit exists}\,,\\[5pt]
0 & \text{otherwise}\,.
\end{cases}$$
The pointwise defined function $v^*$ only depends on the equivalence class of $v$, and $v^*=v$ a.e. in $U$. 
In turn, we say that $v$ has an approximate limit at $x$ if there exists $t\in\mathbb{R}$ such that 
\begin{equation}\label{lebpt}
\lim_{r\downarrow0}\fint_{B(x,r)}|v(y)-t|\,dy=0\,.
\end{equation}
The set $S_v$ of points where this property fails is called the approximate discontinuity set. It is a $\mathcal{L}^n$-negligible Borel set, and for $x\in U$ the value $t$ determined by \eqref{lebpt} is equal to $v^*(x)$. In addition, the Borel function $v^*:\Omega\setminus S_v\to\mathbb{R}$  is approximately continuous at every point $x\in U\setminus S_v$ (see e.g. \cite[Section 3.6]{AFP} and \cite[Section 1.7.2]{EvGa}). 

We shall make use of the following elementary properties: 
\begin{enumerate}
\item[(i)] if $v_1\leq v_2$ a.e. in $U$, then $v_1^*(x)\leq v_2^*(x)$ for every $x\in U\setminus (S_{v_1}\cup S_{v_2})$;
\vskip3pt

\item[(ii)] if $f:\mathbb{R}\to\mathbb{R}$ is a Lipschitz function and $w:=f\circ v$, then $S_w\subset S_v$ and $w^*(x)=f(v^*(x))$ for every $x\in \Omega\setminus S_v$. 
\end{enumerate}

Finally, by standard results on $BV$-functions (see \cite[Section 3.7]{AFP}), we have $\mathcal{H}^{n-1}(S_v)=0$ whenever $v\in W^{1,1}_{\rm loc}(U)$. In what follows, we may  write $v$ instead of $v^*$  if it is clear from the context. 

\begin{remark}\label{remgeodist}
For a nonnegative  $v\in W^{1,1}_{\rm loc}(U)\cap L^\infty(U)$, one has $0\leq v^*(x)\leq \|v\|_{L^\infty(U)}$ at every point $x\in U\setminus S_v$, as a consequence of (i) above. In particular, 
$$0\leq \int_{\Gamma}v\,d\mathcal{H}^1\leq \|v\|_{L^\infty(U)} \mathcal{H}^1(\Gamma)$$
for every rectifiable curve $\Gamma\subset U$. As a consequence, if $U$ is assumed to be convex, one has 
$$0\leq {\bf D}(v;a,b):= \inf_{\Gamma: a \leadsto b}\int_{\Gamma}v\,d\mathcal{H}^1\leq  \|v\|_{L^\infty(U)}|a-b|\qquad \forall a,b\in U\,,$$
where the infimum is taken over all rectifiable curves $\Gamma\subset U$ running from $a$ to $b$. It is then customary to prove that the function $x\mapsto {\bf D}(v;a,x)$ is $\|v\|_{L^\infty(U)}$-Lipschitz continuous. 
\end{remark}

%%%%%%%%%%%%%%%%%%%%%%%%%%%%%%%%%%%%%%%%%%%%%%%%%%%%%%%%%%

\subsection{The minimization problem with prescribed curves}\label{subsecprescrcurv}

In this subsection, we investigate the minimization problem 
\begin{equation}\label{minprescrcurv}
\min_{u\in 1+H^1_0(\Omega)} E^\mu_\varepsilon(u,\overrightarrow{\boldsymbol{\gamma}})
\end{equation}
for a prescribed set of curves $\overrightarrow{\boldsymbol{\gamma}}$ satisfying a mild regularity constraint: we shall assume that it belongs to 
$$\mathscr{P}_\Lambda(a_0,\mu):=\Big\{\overrightarrow{\boldsymbol{\gamma}}\in  \mathscr{P}(a_0,\mu): {\bf Al}\big(\Gamma(\gamma_i)\big)\leq \Lambda\text{ for each $i$}\Big\} \,,$$
for a given constant $\Lambda\geq 2$, where we have set  
$${\bf Al}(K):=\sup\left\{\frac{\mathcal{H}^1(K\cap B(x,r))}{r}: r>0\,,\;x\in K\right\}\quad\text{for a closed set $K\subset \mathbb{R}^2$}\,.$$
In this context, we establish existence and uniqueness of the solution, as well as regularity estimates. The introduction of this regularity constraint is motivated by  the following lemma, consequence of a classical result due to N.G. Meyers \& W.P. Ziemer \cite{MeZi}.  

\begin{lemma}\label{lemfond}
If $\overrightarrow{\boldsymbol{\gamma}}\in\mathscr{P}_\Lambda(a_0,\mu)$, then 
the functional 
$$B_\mu[\overrightarrow{\boldsymbol{\gamma}}]:(u,v)\in H^1(\Omega)\times H^1(\Omega)\mapsto \sum_{i=1}^N\beta_i\int_{\Gamma(\gamma_i)}uv\,d\mathcal{H}^1$$
defines a symmetric, nonnegative, and continuous  bilinear form on $H^1(\Omega)$ satisfying
$$\big\|B_\mu[\overrightarrow{\boldsymbol{\gamma}}]\big\|\leq  C_\Omega\|\mu\| \Lambda\,,$$
for some constant $C_\Omega$ depending only on $\Omega$.
 \end{lemma}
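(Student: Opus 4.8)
The plan is to reduce the claim to a single trace estimate: for $\overrightarrow{\boldsymbol{\gamma}}\in\mathscr{P}_\Lambda(a_0,\mu)$ and each $i$, I want to bound $\int_{\Gamma(\gamma_i)}|u|^2\,d\mathcal H^1$ by $C_\Omega\Lambda\|u\|_{H^1(\Omega)}^2$, with $C_\Omega$ depending only on $\Omega$. Granting this, bilinearity and symmetry of $B_\mu[\overrightarrow{\boldsymbol{\gamma}}]$ are immediate from linearity of the integral, nonnegativity is clear since the integrand $uv$ with $u=v$ is nonnegative and the $\beta_i$ are positive, and continuity together with the operator-norm bound $\|B_\mu[\overrightarrow{\boldsymbol{\gamma}}]\|\leq C_\Omega\|\mu\|\Lambda$ follows by polarization and Cauchy--Schwarz: $|B_\mu[\overrightarrow{\boldsymbol{\gamma}}](u,v)|\le\sum_i\beta_i\big(\int_{\Gamma(\gamma_i)}|u|^2\big)^{1/2}\big(\int_{\Gamma(\gamma_i)}|v|^2\big)^{1/2}\le C_\Omega\Lambda\big(\sum_i\beta_i\big)\|u\|_{H^1}\|v\|_{H^1}$, and $\sum_i\beta_i=\mu(\overline\Omega_0)=\|\mu\|$. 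One small care point: the integrals are of the precise representative of $uv$; since $u\in H^1(\Omega)$ one has $\mathcal H^1(S_u)=0$ by the $BV$ facts recalled in Subsection~\ref{SecPrecRep}, and $(uv)^*=u^*v^*$ off $S_u\cup S_v$, so the integrands are unambiguously defined $\mathcal H^1\res\Gamma(\gamma_i)$-a.e.; I would remark this once.

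The heart of the matter is thus the trace inequality on the rectifiable curve $\Gamma(\gamma_i)$, and this is where Meyers--Ziemer \cite{MeZi} enters. The relevant statement is that for a Radon measure $\nu$ on $\mathbb R^2$ satisfying the growth bound $\nu(B(x,r))\le \mathcal A\, r$ for all $x,r$ (an ``Ahlfors upper-regularity'' or Morrey-type condition), one has a continuous embedding $W^{1,1}(\mathbb R^2)\hookrightarrow L^1(\nu)$ — equivalently, the maximal-function/capacitary estimate gives $\int |f|\,d\nu\le C\,\mathcal A\,\|f\|_{W^{1,1}(\mathbb R^2)}$ for the precise representative $f$. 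I would apply this with $\nu=\mathcal H^1\res\Gamma(\gamma_i)$, whose growth constant is exactly ${\bf Al}(\Gamma(\gamma_i))\le\Lambda$ by definition of $\mathscr P_\Lambda(a_0,\mu)$, and with $f=|u|^2$ (or rather a suitable truncation/approximation, see below). Then, using a bounded extension operator $H^1(\Omega)\to H^1(\mathbb R^2)$ for the smooth bounded domain $\Omega$ and the estimate $\||u|^2\|_{W^{1,1}}\lesssim\|u\|_{H^1}^2$ (since $\nabla(|u|^2)=2u\nabla u$ and Cauchy--Schwarz), I obtain $\int_{\Gamma(\gamma_i)}|u|^2\,d\mathcal H^1\le C_\Omega\Lambda\|u\|_{H^1(\Omega)}^2$, which is the desired trace bound.

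A few technical wrinkles must be dealt with carefully, and these are the real obstacles rather than the soft algebra. First, $|u|^2$ for $u\in H^1$ need not lie in $W^{1,1}$ globally (no $L^\infty$ control a priori), so I would run the argument for $u\wedge M$ and pass to the limit $M\to\infty$ by monotone convergence on the curve and Fatou/lower semicontinuity, or alternatively prove the cleaner linear trace inequality $\int_{\Gamma(\gamma_i)}|v|\,d\mathcal H^1\le C_\Omega\Lambda\|v\|_{H^1(\Omega)}$ for $v\in H^1(\Omega)$ directly (here $v\in H^1\subset W^{1,1}$ locally, extension is fine, and one may need $W^{1,p}$ with $p>1$ to be safe — the $n=2$ Sobolev exponent makes $W^{1,1}\hookrightarrow L^2$, so actually $\int|v|^2\,d\nu$ is also directly controllable), and then apply it to $v=uw\in W^{1,1}$ with $\nabla(uw)=u\nabla w+w\nabla u$ and Cauchy--Schwarz to recover the bilinear estimate with $\|uw\|_{W^{1,1}}\le\|u\|_{H^1}\|w\|_{H^1}$. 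Second, I must make sure the trace of the $H^1$-extension agrees $\mathcal H^1\res\Gamma(\gamma_i)$-a.e.\ with the precise representative used in $E_\varepsilon^\mu$: this is precisely the content of Meyers--Ziemer (the quasi-everywhere-defined precise representative is the right trace against measures with linear growth, and $\mathcal H^1\res\Gamma(\gamma_i)$ charges no set of zero $W^{1,1}$-capacity by the growth bound). Third, continuity of $B_\mu[\overrightarrow{\boldsymbol{\gamma}}]$ as a bilinear form on $H^1(\Omega)$ is then automatic from the norm bound. I expect the capacitary/trace step — correctly invoking \cite{MeZi} with the growth constant $\Lambda$ and identifying the trace with the precise representative — to be the only genuinely delicate point; everything else is bookkeeping.
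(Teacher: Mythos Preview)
Your approach is correct and essentially identical to the paper's: both invoke the Meyers--Ziemer trace inequality for measures with linear growth (applied to $\nu=\mathcal H^1\res\Gamma(\gamma_i)$ with constant $\Lambda$), pass through a continuous extension $H^1(\Omega)\to H^1(\mathbb R^2)$, and use that the product of two $H^1(\mathbb R^2)$ functions lies in $W^{1,1}(\mathbb R^2)$. Your worry that $|u|^2$ might fail to be in $W^{1,1}$ is unnecessary --- for $\bar u,\bar v\in H^1(\mathbb R^2)$ one has $\bar u\bar v\in L^1$ and $\nabla(\bar u\bar v)=\bar u\nabla\bar v+\bar v\nabla\bar u\in L^1$ by Cauchy--Schwarz, so no truncation is needed; the paper indeed applies Meyers--Ziemer directly to $w=\bar u\bar v$ (your ``alternative'' route) rather than going through the quadratic form and polarization.
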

 
 \begin{proof}
{\it Step 1.} For a given $i\in\{1,\ldots,N\}$, we consider the finite measure on $\mathbb{R}^2$ defined by $\mu_i:=\mathcal{H}^1\res \Gamma(\gamma_i)$. Let $x\in\mathbb{R}^2$ and $r>0$ such that $\Gamma(\gamma_i)\cap B(x,r)\not=\emptyset$. Choose a point $z\in \Gamma(\gamma_i)\cap B(x,r)$, and notice that $\Gamma(\gamma_i)\cap B(x,r)\subset \Gamma(\gamma_i)\cap B(z,2r)$. Then,
 $$\mu_i\big(B(x,r)\big)\leq \mu_i\big(B(z,2r)\big)\leq 2r{\bf Al}\big(\Gamma(\gamma_i)\big)\,,$$
 which shows that 
 $$\sup\left\{\frac{\mu_i\big(B(x,r)\big)}{r}: r>0\,,\;x\in\mathbb{R}^2\right\}\leq 2\Lambda\,. $$
Since $W^{1,1}(\mathbb{R}^2)$-functions are approximately continuous $\mathcal{H}^{1}$-a.e. in $\mathbb{R}^2$, we can apply \cite[Theorem 5.12.4]{Zi} (see also \cite{MeZi}) to infer that $w\in L^1(\mu_i)$ for every $w\in W^{1,1}(\mathbb{R}^2)$ (or more precisely, $w^*\in L^1(\mu_i)$), with the estimate
\begin{equation}\label{estiZiem}
\int_{\Gamma(\gamma_i)}|w|\,d\mathcal{H}^1=\int_{\mathbb{R}^2}|w|\,d\mu_i\leq C\Lambda\int_{\mathbb{R}^2}|\nabla w|\,dx\,,
\end{equation}
for some universal constant $C>0$.
\vskip3pt

\noindent{\it Step 2.} Let $u\in H^1(\Omega)\mapsto \bar u \in H^1(\mathbb{R}^2)$ be a continuous linear extension operator (whose existence is ensured by the smoothness of $\Omega$). Note that for $u,v\in H^1(\Omega)$, we have $\bar u \bar v \in W^{1,1}(\mathbb{R}^2)$. Since $\sum_i\beta_i=\mu(\Omega)$, it follows from Step 1 that $\bar u \bar v \in L^1(\mu_i)$ 
for each $i\in\{1,\ldots,N\}$ (or more precisely, $(\bar u \bar v)^* \in L^1(\mu_i)$), and 
\begin{multline*}
\big|B_\mu[\overrightarrow{\boldsymbol{\gamma}}](u,v)\big| \leq C\|\mu\|\Lambda \int_{\mathbb{R}^2}|\nabla(\bar u\bar v)|\,dx\\
\leq C\|\mu\|\Lambda \|\bar u\|_{H^1(\mathbb{R}^2)}\|\bar v\|_{H^1(\mathbb{R}^2)}\leq  C_\Omega\|\mu\| \Lambda \| u\|_{H^1(\Omega)}\| v\|_{H^1(\Omega)}\,,
\end{multline*}
which completes the proof. 
 \end{proof}

Given $\overrightarrow{\boldsymbol{\gamma}}\in\mathscr{P}_\Lambda(a_0,\mu)$, we now rewrite for $u\in H^1(\Omega)$, 
$$ E^\mu_\varepsilon(u,\overrightarrow{\boldsymbol{\gamma}})= \eps\int_\Omega|\nabla u|^2\,dx+\frac{1}{4\eps}\int_{\Omega}(u-1)^2\,dx+\frac{1}{\lambda_\eps}B_\mu[\overrightarrow{\boldsymbol{\gamma}}](u,u)+\frac{\delta_\varepsilon}{\lambda_\varepsilon}\sum_{i=1}^N\beta_i\mathcal{H}^1(\Gamma(\gamma_i))\,.$$
By the previous lemma, $E^\mu_\varepsilon(u,\overrightarrow{\boldsymbol{\gamma}})<\infty$ for every $u\in H^1(\Omega)$, 
and  $E^\mu_\varepsilon(\cdot,\overrightarrow{\boldsymbol{\gamma}})$ is lower semicontinuous with respect to weak convergence in $H^1(\Omega)$. Owing to the strict convexity of the functional $E^\mu_\varepsilon(\cdot,\overrightarrow{\boldsymbol{\gamma}})$, we conclude to the following
 
 \begin{theorem}\label{existuniqthmgammafix}
 Given $\overrightarrow{\boldsymbol{\gamma}}\in\mathscr{P}_\Lambda(a_0,\mu)$,  problem \eqref{minprescrcurv} 
admits a unique solution $u_{\overrightarrow{\boldsymbol{\gamma}}}$. 
\end{theorem}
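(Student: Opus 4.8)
The plan is to prove Theorem \ref{existuniqthmgammafix} by applying the direct method of the calculus of variations to the functional $E^\mu_\varepsilon(\cdot,\overrightarrow{\boldsymbol{\gamma}})$ over the affine space $1+H^1_0(\Omega)$, together with a strict convexity argument for uniqueness. First I would recall the decomposition established just above the statement,
\begin{equation*}
E^\mu_\varepsilon(u,\overrightarrow{\boldsymbol{\gamma}})=\eps\int_\Omega|\nabla u|^2\,dx+\frac{1}{4\eps}\int_{\Omega}(u-1)^2\,dx+\frac{1}{\lambda_\eps}B_\mu[\overrightarrow{\boldsymbol{\gamma}}](u,u)+\frac{\delta_\varepsilon}{\lambda_\varepsilon}\sum_{i=1}^N\beta_i\mathcal{H}^1(\Gamma(\gamma_i))\,,
\end{equation*}
and observe that by Lemma \ref{lemfond} the bilinear form $B_\mu[\overrightarrow{\boldsymbol{\gamma}}]$ is symmetric, nonnegative, and continuous on $H^1(\Omega)$, so each term is finite and nonnegative; the last term is a constant independent of $u$. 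Hence $E^\mu_\varepsilon(\cdot,\overrightarrow{\boldsymbol{\gamma}})$ is well defined and finite on all of $H^1(\Omega)$, and in particular $\inf_{1+H^1_0(\Omega)}E^\mu_\varepsilon(\cdot,\overrightarrow{\boldsymbol{\gamma}})<\infty$.

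Next I would establish coercivity: along a minimizing sequence $\{u_k\}\subset 1+H^1_0(\Omega)$ the first two (Ginzburg--Landau) terms are bounded, which controls $\|\nabla u_k\|_{L^2(\Omega)}$ and $\|u_k-1\|_{L^2(\Omega)}$, hence bounds $\{u_k\}$ in $H^1(\Omega)$ (using $\eps,\lambda_\eps,\delta_\eps$ fixed and the Poincar\'e inequality is not even needed since $u_k-1\in H^1_0(\Omega)$ is controlled directly). Passing to a subsequence, $u_k\rightharpoonup u_\infty$ weakly in $H^1(\Omega)$ with $u_\infty\in 1+H^1_0(\Omega)$ since $1+H^1_0(\Omega)$ is weakly closed. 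The Dirichlet term is weakly lower semicontinuous by convexity, the $L^2$ term is weakly continuous by Rellich compactness, and the term $\frac{1}{\lambda_\eps}B_\mu[\overrightarrow{\boldsymbol{\gamma}}](u_k,u_k)$ is weakly lower semicontinuous because $B_\mu[\overrightarrow{\boldsymbol{\gamma}}]$ is a nonnegative continuous quadratic form (the associated functional $u\mapsto B_\mu[\overrightarrow{\boldsymbol{\gamma}}](u,u)$ is convex and strongly continuous, hence weakly l.s.c.). Adding the constant last term, we conclude $E^\mu_\varepsilon(u_\infty,\overrightarrow{\boldsymbol{\gamma}})\leq\liminf_k E^\mu_\varepsilon(u_k,\overrightarrow{\boldsymbol{\gamma}})=\inf_{1+H^1_0(\Omega)}E^\mu_\varepsilon(\cdot,\overrightarrow{\boldsymbol{\gamma}})$, so $u_\infty$ is a minimizer.

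For uniqueness I would invoke strict convexity of $u\mapsto E^\mu_\varepsilon(u,\overrightarrow{\boldsymbol{\gamma}})$ on $1+H^1_0(\Omega)$: the map $u\mapsto\eps\int_\Omega|\nabla u|^2$ is strictly convex modulo constants, and combined with the strictly convex term $\frac{1}{4\eps}\int_\Omega(u-1)^2$, any nontrivial affine segment between two distinct minimizers would have strictly smaller energy at its midpoint (the $B_\mu$ term contributes a convex, i.e. nonnegative-definite, quadratic perturbation which does not spoil strictness), contradicting minimality. Alternatively one can argue via the Euler--Lagrange equation: both minimizers solve the same linear elliptic variational identity
\begin{equation*}
2\eps\int_\Omega\nabla u\cdot\nabla\varphi\,dx+\frac{1}{2\eps}\int_\Omega(u-1)\varphi\,dx+\frac{2}{\lambda_\eps}B_\mu[\overrightarrow{\boldsymbol{\gamma}}](u,\varphi)=0\qquad\forall\,\varphi\in H^1_0(\Omega)\,,
\end{equation*}
and subtracting and testing with $\varphi=u_1-u_2\in H^1_0(\Omega)$ forces $\nabla(u_1-u_2)=0$ and $u_1-u_2=0$ a.e., whence $u_1=u_2$. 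I expect the only genuinely delicate point to be the weak lower semicontinuity of the $B_\mu$-term, but this is immediate from Lemma \ref{lemfond}: nonnegativity plus continuity of the bilinear form makes the quadratic functional convex and strongly continuous on $H^1(\Omega)$, hence sequentially weakly l.s.c.; everything else is routine.
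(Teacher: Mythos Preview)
Your proof is correct and follows essentially the same approach as the paper: the paper simply observes, immediately before the theorem statement, that by Lemma~\ref{lemfond} the functional $E^\mu_\varepsilon(\cdot,\overrightarrow{\boldsymbol{\gamma}})$ is finite on $H^1(\Omega)$, weakly lower semicontinuous, and strictly convex, whence existence and uniqueness follow. You have spelled out in detail the direct method and the strict convexity argument that the paper leaves implicit.
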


For $\overrightarrow{\boldsymbol{\gamma}}\in\mathscr{P}_\Lambda(a_0,\mu)$, we shall refer to $u_{\overrightarrow{\boldsymbol{\gamma}}}$ as {\bf the potential of }$\overrightarrow{\boldsymbol{\gamma}}$. It satisfies the Euler-Lagrange  equation 
\begin{equation}\label{eqEL}
\begin{cases}
\displaystyle-\varepsilon^2\Delta u_{\overrightarrow{\boldsymbol{\gamma}}}= \frac{1}{4}(1-u_{\overrightarrow{\boldsymbol{\gamma}}}) -\frac{\varepsilon}{\lambda_\varepsilon}B_\mu[\overrightarrow{\boldsymbol{\gamma}}](u_{\overrightarrow{\boldsymbol{\gamma}}},\cdot) &\text{in $H^{-1}(\Omega)$}\,,\\[8pt]
u_{\overrightarrow{\boldsymbol{\gamma}}}=1 & \text{on $\partial\Omega$}\,.
\end{cases}
\end{equation}
Our next objective is to obtain some regularity estimates on $u_{\overrightarrow{\boldsymbol{\gamma}}}$ with explicit dependence on the parameters. We start with an elementary   lemma.

\begin{lemma}\label{bound1}
Let $\overrightarrow{\boldsymbol{\gamma}}\in\mathscr{P}_\Lambda(a_0,\mu)$. The potential $u_{\overrightarrow{\boldsymbol{\gamma}}}$ satisfies $0\leq u_{\overrightarrow{\boldsymbol{\gamma}}}\leq 1$ a.e. in $\Omega$, and $u_{\overrightarrow{\boldsymbol{\gamma}}}\in C^\infty\big(\overline\Omega\setminus\Gamma(\overrightarrow{\boldsymbol{\gamma}})\big)$. 
\end{lemma}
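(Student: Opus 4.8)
The plan is to establish the two assertions separately, treating the pointwise bounds $0 \le u_{\overrightarrow{\boldsymbol{\gamma}}} \le 1$ first and then the interior smoothness. For the bounds, I would argue by truncation/comparison. Fix $\overrightarrow{\boldsymbol{\gamma}} \in \mathscr{P}_\Lambda(a_0,\mu)$ and write $u = u_{\overrightarrow{\boldsymbol{\gamma}}}$. To prove $u \le 1$, set $v := \min(u,1) = 1 - (u-1)^+ \in 1 + H^1_0(\Omega)$ (note $u - 1 \in H^1_0(\Omega)$, so $(u-1)^+ \in H^1_0(\Omega)$ and $v$ is admissible). Comparing energies term by term: $\int_\Omega |\nabla v|^2 \le \int_\Omega |\nabla u|^2$ since $\nabla v = \nabla u \,\ind_{\{u<1\}}$; $\int_\Omega (v-1)^2 \le \int_\Omega (u-1)^2$ pointwise; and for the $B_\mu$-term, using property (i) of precise representatives from Subsection \ref{SecPrecRep} together with $0 \le v^* \le u^*$ on $\{u \ge 0\}$ — one checks $B_\mu[\overrightarrow{\boldsymbol{\gamma}}](v,v) \le B_\mu[\overrightarrow{\boldsymbol{\gamma}}](u,u)$ (this requires knowing $u \ge 0$, so the lower bound should be handled alongside or first). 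Symmetrically, to prove $u \ge 0$, test with $v := \max(u,0) = u^+ \in 1 + H^1_0(\Omega)$: again the Dirichlet and potential terms do not increase, and since $v^* = (u^*)^+ \ge 0$ with $(v^*)^2 \le (u^*)^2$ on the curve, the $B_\mu$-term does not increase either. By the uniqueness statement of Theorem \ref{existuniqthmgammafix}, the minimizer is unique, hence $v = u$, giving the two bounds. One small care point: I should run the $u \ge 0$ comparison first (which uses only $(u^+)^2 \le u^2$, valid unconditionally), then use $u \ge 0$ to justify the monotonicity of the $B_\mu$-term in the $u \le 1$ comparison.

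For the interior smoothness, the key observation is that away from the curves $\Gamma(\overrightarrow{\boldsymbol{\gamma}})$ the bilinear term drops out. Precisely, fix any open set $V$ with $\overline V \subset \overline\Omega \setminus \Gamma(\overrightarrow{\boldsymbol{\gamma}})$; then for every $\varphi \in C^\infty_c(V)$ we have $\mathrm{spt}\,\varphi \cap \Gamma(\gamma_i) = \emptyset$ for each $i$, so $B_\mu[\overrightarrow{\boldsymbol{\gamma}}](u,\varphi) = 0$. Hence from the Euler--Lagrange equation \eqref{eqEL}, $u$ satisfies the linear equation $-\varepsilon^2 \Delta u = \tfrac14(1-u)$ in the distributional sense on $\overline\Omega \setminus \Gamma(\overrightarrow{\boldsymbol{\gamma}})$ (interior part), and $u = 1$ on $\partial\Omega$, which lies in $\overline\Omega \setminus \Gamma(\overrightarrow{\boldsymbol{\gamma}})$ since the curves live in $\overline\Omega_0 \subset\subset \Omega$. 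Standard elliptic regularity (bootstrapping $W^{1,2} \hookrightarrow W^{2,2} \hookrightarrow W^{4,2} \hookrightarrow \cdots$, or directly invoking interior/boundary Schauder and Weyl's lemma since the equation has constant coefficients and smooth data, using the smoothness of $\partial\Omega$ near the boundary portion) yields $u \in C^\infty$ on $\overline\Omega \setminus \Gamma(\overrightarrow{\boldsymbol{\gamma}})$, including up to $\partial\Omega$.

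I do not expect a serious obstacle here; the only genuinely delicate point is the handling of the precise representative in the comparison argument for the $B_\mu$-term — one must invoke property (ii) (that $v^*$ behaves well under the Lipschitz maps $t \mapsto t^+$, $t \mapsto \min(t,1)$) to know $v^* = (u^*)^+$ resp. $\min(u^*,1)$ pointwise off the negligible jump set, and then property (i) for the inequality $(v^*)^2 \le (u^*)^2$ on the curves (which carry no mass on $\mathcal{H}^1$-null sets, by Lemma \ref{lemfond} and $\mathcal{H}^1(S_v) = \mathcal{H}^1(S_u) = 0$ for $H^1$-functions). Once that bookkeeping is in place, everything reduces to routine energy comparison and classical linear elliptic regularity.
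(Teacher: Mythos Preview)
Your proposal is correct and follows essentially the same approach as the paper: a truncation/energy-comparison argument for the bounds $0\le u_{\overrightarrow{\boldsymbol{\gamma}}}\le 1$, followed by the observation that the Euler--Lagrange equation reduces to $-\varepsilon^2\Delta u=\tfrac14(1-u)$ away from $\Gamma(\overrightarrow{\boldsymbol{\gamma}})$ and standard elliptic regularity. The only cosmetic difference is that the paper handles both truncations at once via $f(t)=\max(\min(t,1),0)$ (using the intermediate inequality $v^2\le f\circ u^2\le u^2$ to compare precise representatives in a single stroke), whereas you do them sequentially---first $u\ge 0$ via $v=u^+$, then $u\le 1$ via $v=\min(u,1)$; your ordering remark is exactly right, and either route works.
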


\begin{proof}
Let us first prove that $0\leq u_{\overrightarrow{\boldsymbol{\gamma}}}\leq 1$ a.e. in $\Omega$. To this purpose, we consider the Lipschitz function $f(t):=\max(\min(t,1),0)$, and the competitor $v:=f\circ u_{\overrightarrow{\boldsymbol{\gamma}}}$.  It is a classical fact that $v\in 1+H_0^1(\Omega)$, and $|\nabla v|\leq |\nabla u_{\overrightarrow{\boldsymbol{\gamma}}}|$ a.e. in $\Omega$. 
Since  $u^2_{\overrightarrow{\boldsymbol{\gamma}}}$ belongs to $W^{1,1}(\Omega)$, we also have  
$f\circ u^2_{\overrightarrow{\boldsymbol{\gamma}}} \in W^{1,1}(\Omega)$. Noticing that  $v^2\leq f\circ u^2_{\overrightarrow{\boldsymbol{\gamma}}} $ a.e. in $\Omega$, we derive that 
$$(v^2)^*(x)\leq  \big(f\circ u^2_{\overrightarrow{\boldsymbol{\gamma}}} \big)^*(x) =f\big((u^2_{\overrightarrow{\boldsymbol{\gamma}}})^*(x)\big)\leq (u^2_{\overrightarrow{\boldsymbol{\gamma}}})^*(x)\quad\text{for every $x\in\Omega\setminus(S_{v^2}\cup S_{u^2_{\overrightarrow{\boldsymbol{\gamma}}}})$}\,.$$
Consequently, $(v^2)^*\leq (u^2_{\overrightarrow{\boldsymbol{\gamma}}})^*$ $\mathcal{H}^1$-a.e. in $\Omega$, so that $B_\mu[\overrightarrow{\boldsymbol{\gamma}}](v,v)\leq B_\mu[\overrightarrow{\boldsymbol{\gamma}}](u_{\overrightarrow{\boldsymbol{\gamma}}},u_{\overrightarrow{\boldsymbol{\gamma}}})$. 

From this discussion, we easily infer that $E^\mu_\varepsilon(v,\overrightarrow{\boldsymbol{\gamma}})\leq E^\mu_\varepsilon(u_{\overrightarrow{\boldsymbol{\gamma}}},\overrightarrow{\boldsymbol{\gamma}})$ with strict inequality if $\{v\not= u_{\overrightarrow{\boldsymbol{\gamma}}}\}$ has a non vanishing Lebesgue measure. Hence the conclusion follows from the minimality of  $u_{\overrightarrow{\boldsymbol{\gamma}}}$. 

Now we observe that $u_{\overrightarrow{\boldsymbol{\gamma}}}\in H^1(\Omega)\cap L^\infty(\Omega)$ satisfies
$$-\varepsilon^2\Delta u_{\overrightarrow{\boldsymbol{\gamma}}}= \frac{1}{4}(1-u_{\overrightarrow{\boldsymbol{\gamma}}}) \quad\text{in $\mathscr{D}^\prime\big(\Omega\setminus \Gamma(\overrightarrow{\boldsymbol{\gamma}})\big)$}\,. $$
From this equation and \eqref{eqEL}, we conclude that  $u_{\overrightarrow{\boldsymbol{\gamma}}}\in C^\infty\big(\overline\Omega\setminus\Gamma(\overrightarrow{\boldsymbol{\gamma}})\big)$ by means of the standard elliptic regularity theory for bounded weak solutions (see e.g. \cite{GiTr}).
\end{proof}

\begin{lemma}\label{comp}
Let $\overrightarrow{\boldsymbol{\gamma}}\in\mathscr{P}_\Lambda(a_0,\mu)$. At every $x_0\in \Omega\setminus \Gamma(\overrightarrow{\boldsymbol{\gamma}})$ satisfying ${\rm dist}(x_0,\Gamma(\overrightarrow{\boldsymbol{\gamma}}))\geq 12\varepsilon$, we have
$$0\leq 1- u_{\overrightarrow{\boldsymbol{\gamma}}}(x_0)\leq \exp\left(-\frac{3\,{\rm dist}(x_0,\Gamma(\overrightarrow{\boldsymbol{\gamma}}))}{32\varepsilon}\right)\,.$$
\end{lemma}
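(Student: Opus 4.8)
The plan is to establish the exponential decay estimate $1-u_{\overrightarrow{\boldsymbol{\gamma}}}\leq \exp(-3\,{\rm dist}(x_0,\Gamma(\overrightarrow{\boldsymbol{\gamma}}))/(32\varepsilon))$ by a barrier (comparison principle) argument applied to the function $w:=1-u_{\overrightarrow{\boldsymbol{\gamma}}}$ on a ball around $x_0$ where the equation is ``clean''. By Lemma \ref{bound1} we already know $0\leq u_{\overrightarrow{\boldsymbol{\gamma}}}\leq 1$, so $0\leq w\leq 1$, and on the set $\Omega\setminus\Gamma(\overrightarrow{\boldsymbol{\gamma}})$ the function $w$ is smooth and solves the linear equation
\begin{equation*}
-\varepsilon^2\Delta w = -\varepsilon^2\Delta(1-u_{\overrightarrow{\boldsymbol{\gamma}}}) = \varepsilon^2\Delta u_{\overrightarrow{\boldsymbol{\gamma}}} = -\frac14(1-u_{\overrightarrow{\boldsymbol{\gamma}}}) = -\frac14 w\,,
\end{equation*}
that is, $\varepsilon^2\Delta w = \tfrac14 w$, or $\Delta w = w/(4\varepsilon^2)$. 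Fix $x_0$ with $R:={\rm dist}(x_0,\Gamma(\overrightarrow{\boldsymbol{\gamma}}))\geq 12\varepsilon$; then $w$ satisfies $\Delta w = w/(4\varepsilon^2)\geq 0$ on the ball $B(x_0,R)$, so $w$ is subharmonic there, but more importantly it satisfies a genuine linear elliptic equation with a favorable zeroth-order term, which is exactly the setting for a radial exponential barrier.

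The key step is to construct a supersolution. I would look for a radial barrier of the form $\varphi(x) = A\exp(\kappa|x-x_0|)$, or more conveniently (to avoid the singularity of radial coordinates at the center and to get the cleaner exponent) a one-dimensional barrier in a chosen direction, or simply $\varphi(x)=\exp\big(\kappa(|x-x_0|-R)\big)$ on the annulus-like region. Let me use the radial function $\varphi(r)=e^{\kappa(r-R)}$ with $r=|x-x_0|$; then $\Delta\varphi = \varphi''+\tfrac1r\varphi' = \kappa^2\varphi + \tfrac{\kappa}{r}\varphi \geq \kappa^2\varphi$ for $\kappa>0$. We want $\varphi$ to be a supersolution of $\Delta w = w/(4\varepsilon^2)$ near the boundary, i.e. we compare $w$ with $\varphi$ on the region where $\varphi$ dominates. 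Actually the clean way: on $B(x_0,R)$ we have $\Delta(w-\varphi) = \tfrac{w}{4\varepsilon^2} - \Delta\varphi \leq \tfrac{w}{4\varepsilon^2} - \kappa^2\varphi$; choosing $\kappa = \tfrac{1}{2\varepsilon}\cdot\tfrac{\text{something}}{}$... — here the bookkeeping must be arranged so that $\kappa^2 \geq 1/(4\varepsilon^2)$ fails to be quite enough because of the $\tfrac\kappa r\varphi$ term, which has the *wrong* sign for a supersolution near $r=0$. The remedy, and the reason the hypothesis $R\geq 12\varepsilon$ appears, is to only work on an annulus or a half-ball where $r$ is bounded below, or to use a shifted/truncated barrier; with $r\geq R/2\geq 6\varepsilon$ say, the term $\tfrac\kappa r\varphi$ is controllably small and one absorbs it by taking $\kappa$ slightly below $1/(2\varepsilon)$, e.g. $\kappa = 3/(8\varepsilon)$ (note $9/64 < 1/16 = (1/4)\cdot(1/4\varepsilon^2)\cdot\varepsilon^2$... the constants $3$ and $32$ in the statement are precisely what falls out of this optimization). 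So: set $\kappa=3/(8\varepsilon)$, work on $B(x_0,R)\setminus B(x_0,R/2)$, check $\varphi(r)=e^{\kappa(r-R)}$ is a supersolution there (using $r\geq R/2\geq 6\varepsilon$ to dominate the $\kappa/r$ term), check the boundary inequality $w\leq\varphi$ on $\partial B(x_0,R)$ (trivial: $w\leq 1=\varphi(R)$) and on $\partial B(x_0,R/2)$ (trivial: $w\leq 1\leq e^{0}\leq \varphi$... wait $\varphi(R/2)=e^{-\kappa R/2}<1$, so this is the opposite — I need $w\leq\varphi$ but $\varphi$ is *small* at the inner sphere).

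So the comparison region must instead be $B(x_0,R)$ itself and I should use a barrier that is $\geq 1$ on an inner region — meaning the correct barrier is $\varphi(x) = \exp\big(\kappa(|x-x_0|-R)\big)$ **compared on all of $B(x_0,R)$**, using that at the center $w\leq 1$ but $\varphi(0)=e^{-\kappa R}$, so this still fails. The actual correct device: the statement bounds $w$ at the single point $x_0$, so by the comparison principle it suffices to note $\Delta w = w/(4\varepsilon^2)$ on $B(x_0,R)$ with $w\leq 1$ on $\partial B(x_0,R)$, and compare with the solution $\psi$ of $\Delta\psi=\psi/(4\varepsilon^2)$ in $B(x_0,R)$, $\psi=1$ on $\partial B(x_0,R)$ — which is radial and, by explicit ODE analysis (modified Bessel $I_0$) or by a sub/supersolution sandwich, satisfies $\psi(x_0)\leq e^{-cR/\varepsilon}$ with the sharp-ish constant. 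The maximum principle gives $w\leq\psi$ on $B(x_0,R)$, hence $w(x_0)\leq\psi(x_0)$, and one estimates $\psi(x_0)$ from above by exhibiting the explicit supersolution $e^{\kappa(|x-x_0|-R)}+e^{-\kappa(|x-x_0|+R)}$ (a symmetric combination whose Laplacian is $(\kappa^2+\kappa/r\cdot(\ldots))\times$itself, again needing $r$ bounded below — so one splits $B(x_0,R)$ into $|x-x_0|\leq\varepsilon$, where $w\leq 1$ trivially and one only needs the estimate for $|x_0'-x_0|$ large, and the annulus where the barrier works). The main obstacle, then, is purely the technical juggling of the $1/r$ term in the radial Laplacian against the zeroth-order coefficient, and this is exactly what forces the margin between $\kappa=3/(8\varepsilon)$ and the naive $1/(2\varepsilon)$, and the threshold $R\geq 12\varepsilon$; I expect the author handles it with either a one-dimensional barrier in the direction joining $x_0$ to the nearest point of $\Gamma(\overrightarrow{\boldsymbol{\gamma}})$ on a suitable rectangle, or with the modified-Bessel comparison, both of which reduce to a routine ODE inequality once the geometry is set up.
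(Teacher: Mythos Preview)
Your overall strategy---set $w=1-u_{\overrightarrow{\boldsymbol{\gamma}}}$, note that $-4\varepsilon^2\Delta w+w=0$ away from $\Gamma(\overrightarrow{\boldsymbol{\gamma}})$, and compare with an explicit supersolution on a ball around $x_0$---is exactly right and is what the paper does. But your proposal never produces a barrier that actually works: you correctly diagnose that $\varphi(r)=e^{\kappa(r-R)}$ is spoiled by the $\kappa/r$ term of the radial Laplacian near $r=0$, and then cycle through annulus arguments, Bessel functions, and symmetric combinations without closing any of them. This is the genuine gap.

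The missing device is a \emph{Gaussian} (quadratic-exponent) barrier rather than a linear-exponent one. The paper sets $R:=\tfrac{3}{4}\,{\rm dist}(x_0,\Gamma(\overrightarrow{\boldsymbol{\gamma}}))\geq 9\varepsilon$ and takes
\[
\omega(x):=\exp\!\left(\frac{|x-x_0|^2-R^2}{8\varepsilon R}\right).
\]
Then $\Delta\omega=\omega\big(\tfrac{|x-x_0|^2}{16\varepsilon^2R^2}+\tfrac{1}{2\varepsilon R}\big)$, so
\[
-4\varepsilon^2\Delta\omega+\omega=\omega\left(1-\frac{|x-x_0|^2}{4R^2}-\frac{2\varepsilon}{R}\right)\geq \omega\left(1-\frac14-\frac{2}{9}\right)>0
\]
on $B(x_0,R)$, using only $|x-x_0|\leq R$ and $R\geq 9\varepsilon$. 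No $1/r$ singularity ever appears, because the quadratic exponent makes $\nabla\omega$ vanish at the center. Since $\omega=1$ on $\partial B(x_0,R)\cap\Omega$ and $\omega\geq 0\geq w$ on $B(x_0,R)\cap\partial\Omega$ (where $w=0$), the maximum principle gives $w\leq\omega$ on $B(x_0,R)\cap\Omega$, and evaluating at $x_0$ yields $w(x_0)\leq e^{-R/(8\varepsilon)}=e^{-3\,{\rm dist}(x_0,\Gamma)/(32\varepsilon)}$. Note also that the paper works on $B(x_0,R)\cap\Omega$ rather than $B(x_0,R)$, handling the possibility that the ball crosses $\partial\Omega$; your writeup tacitly assumes $B(x_0,R)\subset\Omega$, which need not hold.
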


\begin{proof}
Set $R:=\frac{3}{4}{\rm dist}\big(x_0,\Gamma(\overrightarrow{\boldsymbol{\gamma}})\big)\geq 9\varepsilon$. We consider the function $v:=1-u_{\overrightarrow{\boldsymbol{\gamma}}}$ which satisfies $0\leq v\leq 1$ and solves 
$$ 
\begin{cases}
-4\varepsilon^2\Delta v+v=0 &\text{in $B(x_0,R)\cap \Omega$}\,,\\
v=0 & \text{on $ B(x_0,R)\cap \partial\Omega$}\,.
\end{cases}
$$
Now we introduce the function
$$\omega(x):=\exp\left(\frac{|x-x_0|^2-R^2}{8\varepsilon R}\right)\,.$$
As in \cite[Lemma 2]{BBH1}, our choice of $R$ implies that $\omega$ satisfies
$$\begin{cases}
-4\varepsilon^2\Delta\omega+\omega\geq 0 & \text{in $B(x_0,R)\cap \Omega$}\,,\\
\omega=1 & \text{on $\partial B(x_0,R)\cap \Omega$}\,,\\
\omega\geq 0 & \text{on $ B(x_0,R)\cap \partial\Omega$}\,.
\end{cases}$$
Then we infer from the maximum principle that $v\leq \omega$ in $B(x_0,R)\cap \Omega$. Evaluating this inequality at $x_0$ leads to the announced inequality.
\end{proof}

We now provide some pointwise estimates for the first and second derivatives of $u_{\overrightarrow{\boldsymbol{\gamma}}}$. 
Usefulness of these explicit estimates will be revealed in the second part of our work \cite{BLM2}.

\begin{lemma}\label{estigradhessloin}
Let $\overrightarrow{\boldsymbol{\gamma}}\in\mathscr{P}_\Lambda(a_0,\mu)$. At every $x_0\in \overline\Omega\setminus \Gamma(\overrightarrow{\boldsymbol{\gamma}})$ satisfying ${\rm dist}(x_0,\Gamma(\overrightarrow{\boldsymbol{\gamma}}))\geq 13\varepsilon$, we have
$$\big|\nabla u_{\overrightarrow{\boldsymbol{\gamma}}}(x_0)\big| \leq \frac{C_{\boldsymbol{\eta}_0}}{\varepsilon}  \exp\left(-\frac{{\rm dist}(x_0,\Gamma(\overrightarrow{\boldsymbol{\gamma}}))}{32\varepsilon}\right)\,,$$
and 
$$\big|\nabla^2 u_{\overrightarrow{\boldsymbol{\gamma}}}(x_0)\big| \leq \frac{C_{\boldsymbol{\eta}_0}}{\varepsilon^2}  \exp\left(-\frac{{\rm dist}(x_0,\Gamma(\overrightarrow{\boldsymbol{\gamma}}))}{32\varepsilon}\right)\,,$$
for some constant $C_{\boldsymbol{\eta}_0}$ depending only on $\Omega$ and $\boldsymbol{\eta}_0:=\min\big\{{\rm dist}(z,\Omega_0): z\in\partial\Omega\big\}>0$. 
\end{lemma}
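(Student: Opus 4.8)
The plan is to combine the $L^\infty$-decay estimate from Lemma \ref{comp} with interior elliptic estimates applied to the linear equation $-4\varepsilon^2\Delta v + v = 0$ satisfied by $v := 1 - u_{\overrightarrow{\boldsymbol{\gamma}}}$ away from $\Gamma(\overrightarrow{\boldsymbol{\gamma}})$. The slight subtlety is that we must produce estimates that are uniform up to $\partial\Omega$, which is why the ``safety zone'' constant $\boldsymbol{\eta}_0 = \min\{\operatorname{dist}(z,\Omega_0) : z\in\partial\Omega\} > 0$ appears: near $\partial\Omega$ one uses boundary regularity with the homogeneous Dirichlet condition $v = 0$ on $\partial\Omega\cap B(x_0,R)$, while in the interior one uses interior estimates.

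First I would fix $x_0 \in \overline\Omega\setminus\Gamma(\overrightarrow{\boldsymbol{\gamma}})$ with $d := \operatorname{dist}(x_0,\Gamma(\overrightarrow{\boldsymbol{\gamma}})) \geq 13\varepsilon$ and set $\rho := d/4 \geq 13\varepsilon/4$, so that on $B(x_0,\rho)\cap\Omega$ every point $y$ satisfies $\operatorname{dist}(y,\Gamma(\overrightarrow{\boldsymbol{\gamma}})) \geq d - \rho = 3d/4 \geq 12\varepsilon$; hence Lemma \ref{comp} applies at every such $y$ and gives $0 \leq v(y) \leq \exp\bigl(-\tfrac{3(3d/4)}{32\varepsilon}\bigr) = \exp\bigl(-\tfrac{9d}{128\varepsilon}\bigr) \leq \exp\bigl(-\tfrac{d}{16\varepsilon}\bigr)$, so $v$ is small on the whole ball $B(x_0,\rho)\cap\Omega$. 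Rescaling by $\hat v(z) := v(x_0 + 2\varepsilon z)$ turns the equation into $-\Delta\hat v + \hat v = 0$ on a ball of radius $\rho/(2\varepsilon) \geq 13/8$ (a scale-invariant size, bounded below by a universal constant), so standard interior Schauder/$W^{2,p}$ estimates bound $\|\nabla\hat v\|_{L^\infty}$ and $\|\nabla^2\hat v\|_{L^\infty}$ on the concentric half-radius ball by a universal multiple of $\|\hat v\|_{L^\infty}$ on the larger ball; scaling back inserts the factors $\varepsilon^{-1}$ and $\varepsilon^{-2}$ respectively, and the sup of $v$ there is bounded by $\exp(-d/(16\varepsilon))$. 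This already gives the claimed bounds with exponent $-d/(16\varepsilon)$ at points whose distance to $\partial\Omega$ exceeds, say, $\rho$; to reach the weaker exponent $-d/(32\varepsilon)$ in the statement one has room to spare, which I would use to absorb the constant and handle the boundary case.

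For points $x_0$ close to $\partial\Omega$, I would note that since $\operatorname{spt}\mu \subset \overline\Omega_0$ and each $\gamma_i$ takes values in $\overline\Omega_0$, we have $\Gamma(\overrightarrow{\boldsymbol{\gamma}}) \subset \overline\Omega_0$, hence $d = \operatorname{dist}(x_0,\Gamma(\overrightarrow{\boldsymbol{\gamma}})) \geq \operatorname{dist}(x_0,\overline\Omega_0) \geq \min\{\boldsymbol{\eta}_0 - \operatorname{dist}(x_0,\partial\Omega), \ldots\}$; more simply, for any $x_0$ with $\operatorname{dist}(x_0,\partial\Omega) \leq \boldsymbol{\eta}_0/2$ one has $d \geq \boldsymbol{\eta}_0/2$, and the ball $B(x_0,\rho)$ with $\rho := \min\{d/4, \boldsymbol{\eta}_0/4\}$ meets $\partial\Omega$ only along a smooth portion on which $v = 0$; boundary elliptic estimates for $-4\varepsilon^2\Delta v + v = 0$ with zero Dirichlet data (again after rescaling, using the smoothness of $\partial\Omega$ and that $\rho/\varepsilon$ is bounded below — here is where $\boldsymbol{\eta}_0$ relative to the fixed domain enters, since $\rho$ is comparable to $\min\{d,\boldsymbol{\eta}_0\}$) then bound $|\nabla v|, |\nabla^2 v|$ at $x_0$ by the same expressions, with the constant now depending on $\Omega$ and $\boldsymbol{\eta}_0$. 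The main obstacle is purely bookkeeping: making the radii $\rho$, the rescaling scale $2\varepsilon$, and the numerical constants fit together so that (a) Lemma \ref{comp} is applicable on the whole rescaled ball, (b) the rescaled ball has radius bounded below by a universal constant so the elliptic estimates have a uniform constant, and (c) the resulting exponential exponent is at least $d/(32\varepsilon)$ and the prefactor constant is absorbed into $C_{\boldsymbol{\eta}_0}$ — all of which the generous gap between $-9d/(128\varepsilon)$ and $-d/(32\varepsilon)$ comfortably permits.
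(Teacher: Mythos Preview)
Your interior argument is essentially the paper's: rescale $v=1-u_{\overrightarrow{\boldsymbol{\gamma}}}$ by a factor of order $\varepsilon$ so that it solves a unit-scale linear equation, feed in the $L^\infty$-smallness from Lemma~\ref{comp} on the rescaled ball, and read off gradient and Hessian bounds from standard interior elliptic estimates (the paper rescales by $\varepsilon$ on $B_1$ and uses \cite[Theorem~3.9]{GiTr} twice, first for $\nabla$ and then, applied to each component of $\nabla w_\varepsilon$, for $\nabla^2$). Your choice of $\rho=d/4$ versus the paper's fixed $\varepsilon$-ball is cosmetic.

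For the boundary, the paper proceeds differently and a bit more cleanly than your local rescaled-boundary scheme. Rather than rescaling near $\partial\Omega$ and tracking uniformity of boundary Schauder constants as the rescaled domain varies with $\varepsilon$, the paper introduces a fixed intermediate convex open set $\Omega_1$ with $\overline\Omega_0\subset\Omega_1\subset\Omega$ and $\operatorname{dist}(\partial\Omega_1,\partial\Omega\cup\partial\Omega_0)\geq\boldsymbol{\eta}_0/4$, and applies \emph{unrescaled} global elliptic regularity (\cite[Theorem~8.33 and Theorem~4.12]{GiTr}) on the fixed annulus $U=\Omega\setminus\overline\Omega_1$: on $\partial\Omega$ one has $v=0$, and on $\partial\Omega_1$ the interior step already gives $\|v\|_{C^{1,1}(\partial\Omega_1)}\leq C_{\boldsymbol{\eta}_0}\exp(-\boldsymbol{\eta}_0/(64\varepsilon))$, while $\|\Delta v\|_{L^\infty(U)}=\varepsilon^{-2}\|v\|_{L^\infty(U)}/4$ is controlled by Lemma~\ref{comp}. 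This avoids any $\varepsilon$-dependence in the domain geometry. Your approach should also go through since the rescaled boundary only flattens as $\varepsilon\to0$, but the paper's fixed-annulus argument sidesteps that bookkeeping entirely.
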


\begin{proof}
{\it Step 1 (Interior estimates).} We assume in this step that $B(x_0,\varepsilon)\subset \Omega$. Define for $x\in B_1$, the function $w_\varepsilon:=1-u_{\overrightarrow{\boldsymbol{\gamma}}}(x_0+\varepsilon x)$. Then, $w_\varepsilon$ solves 
\begin{equation}\label{eqrescw}
-\Delta w_\varepsilon=\frac{1}{4}w_\varepsilon \quad\text{in $B_1$}\,.
\end{equation}
By Lemma \ref{comp}, we have for every $x\in B_1$, 
$$0\leq w_\varepsilon(x)\leq  \exp\left(-\frac{3\,{\rm dist}(x_0+\varepsilon x,\Gamma(\overrightarrow{\boldsymbol{\gamma}}))}{32\varepsilon}\right)\leq C \exp\left(-\frac{3\,{\rm dist}(x_0,\Gamma(\overrightarrow{\boldsymbol{\gamma}}))}{32\varepsilon}\right)\,.$$
Then we infer from \eqref{eqrescw} and \cite[Theorem 3.9]{GiTr} that 
\begin{equation}\label{gradbd}
|\nabla w_\varepsilon(x)|\leq C\|w_\varepsilon\|_{L^\infty(B_1)}\leq  C \exp\left(-\frac{3\,{\rm dist}(x_0,\Gamma(\overrightarrow{\boldsymbol{\gamma}}))}{32\varepsilon}\right)\quad\forall x\in B_{1/2}\,.
\end{equation}
By linearity of the equation, the gradient vector  $\nabla w_\varepsilon$  satisfies $-\Delta (\nabla w_\varepsilon)=1/4\nabla w_\varepsilon$ in $B_1$. Applying again \cite[Theorem 3.9]{GiTr} to each component of $\nabla w_\varepsilon$ in the smaller ball $B_{1/2}$, we deduce from \eqref{gradbd} that 
$$|\nabla^2 w_\varepsilon(x)|\leq C\|\nabla w_\varepsilon\|_{L^\infty(B_{1/2})}\leq  C \exp\left(-\frac{3\,{\rm dist}(x_0,\Gamma(\overrightarrow{\boldsymbol{\gamma}}))}{32\varepsilon}\right)\quad\forall x\in B_{1/4}\,.$$
Noticing that $|\nabla w_\varepsilon(0)|=\varepsilon|\nabla u(x_0)|$ and $|\nabla^2 w_\varepsilon(0)|=\varepsilon^2|\nabla^2 u(x_0)|$,  the conclusion follows. 
\vskip3pt

\noindent{\it Step 2 (Boundary estimates).} Let $\Omega_1\subset \Omega$ be a smooth and convex open set such that $\overline\Omega_0\subset \Omega_1$ and  $\min\{{\rm dist}(z,\partial\Omega\cup\partial\Omega_0):z\in\partial\Omega_1\}\geq \boldsymbol{\eta}_0/4$. Consider the smooth open set $U:=\Omega\setminus\overline\Omega_1$, and the function $v:\overline U\to \mathbb{R}$ given by $v:=1-u_{\overrightarrow{\boldsymbol{\gamma}}}$. Then $v$ satisfies $-\Delta v=(1/4\varepsilon^2)v$ in $U$, and $v=0$ on $\partial\Omega$.  On the other hand, Lemma \ref{comp} and Step 1 imply that 
$$\frac{1}{\varepsilon^2}\|v\|_{L^\infty(U)}+\|v\|_{C^{1,1}(\partial\Omega_1)}\leq C_{\boldsymbol{\eta}_0} \exp\left(-\frac{\boldsymbol{\eta}_0}{64\varepsilon}\right)\,.$$
From \cite[Theorem 8.33]{GiTr} we deduce that 
$$\frac{1}{\varepsilon^2}\|v\|_{C^{1}(U)}\leq C_{\boldsymbol{\eta}_0}  \exp\left(-\frac{\boldsymbol{\eta}_0}{128\varepsilon}\right)\,.$$
Setting $V_{\boldsymbol{\eta}_0}:=\{x\in\Omega: {\rm dist}(x,\partial\Omega)<\boldsymbol{\eta}_0/5\}$, \cite[Theorem 4.12]{GiTr} now implies  
$$\|v\|_{C^2(V_{\boldsymbol{\eta}_0})}\leq C_{\boldsymbol{\eta}_0} \exp\left(-\frac{\boldsymbol{\eta}_0}{128\varepsilon}\right)\,.$$
This last estimate leads to the conclusion since ${\rm dist}(x_0,\Gamma(\overrightarrow{\boldsymbol{\gamma}}))\geq \boldsymbol{\eta}_0/4$  for every $x_0\in V_{\boldsymbol{\eta}_0}$. 
\end{proof}

\begin{lemma}\label{estgradhessdist}
Let $\overrightarrow{\boldsymbol{\gamma}}\in\mathscr{P}_\Lambda(a_0,\mu)$. At every $x_0\in \overline\Omega\setminus \Gamma(\overrightarrow{\boldsymbol{\gamma}})$ satisfying ${\rm dist}(x_0,\Gamma(\overrightarrow{\boldsymbol{\gamma}}))\leq 13\varepsilon$, we have
$$\big|\nabla u_{\overrightarrow{\boldsymbol{\gamma}}}(x_0)\big| \leq \frac{C_{\boldsymbol{\eta}_0}}{{\rm dist}(x_0,\Gamma(\overrightarrow{\boldsymbol{\gamma}}))} \,,$$
and 
$$\big|\nabla^2 u_{\overrightarrow{\boldsymbol{\gamma}}}(x_0)\big| \leq \frac{C_{\boldsymbol{\eta}_0}}{{\rm dist}^2(x_0,\Gamma(\overrightarrow{\boldsymbol{\gamma}}))} \,,$$
for some constant $C_{\boldsymbol{\eta}_0}$ depending only on $\Omega$ and $\boldsymbol{\eta}_0$ (given in Lemma \ref{estigradhessloin}). 
\end{lemma}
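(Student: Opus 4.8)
\emph{Plan.} Write $u:=u_{\overrightarrow{\boldsymbol{\gamma}}}$ and $d_0:={\rm dist}\big(x_0,\Gamma(\overrightarrow{\boldsymbol{\gamma}})\big)\in(0,13\varepsilon]$. The guiding idea is that the ball $B(x_0,d_0)$ does not meet $\Gamma(\overrightarrow{\boldsymbol{\gamma}})$, so that by Lemma~\ref{bound1} the potential $u$ solves the \emph{linear} equation $-\varepsilon^2\Delta u=\tfrac14(1-u)$ classically on $B(x_0,d_0)\cap\Omega$, with $0\le u\le 1$; and that the relevant scale at $x_0$ is now $d_0$ rather than $\varepsilon$ (which is precisely the regime not covered by Lemma~\ref{estigradhessloin}). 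Following the structure of the proof of Lemma~\ref{estigradhessloin}, I would distinguish an interior case and a boundary case.

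\textbf{Interior case: $B(x_0,d_0/2)\subset\Omega$.} Put $r:=d_0/2$ and $w(y):=1-u(x_0+ry)$ for $y\in B_1$. From the equation above one gets
\begin{equation*}
-\Delta w=-\frac{r^2}{4\varepsilon^2}\,w\quad\text{in }B_1,\qquad 0\le w\le 1,
\end{equation*}
and $d_0\le 13\varepsilon$ forces $\tfrac{r^2}{4\varepsilon^2}=\tfrac{d_0^2}{16\varepsilon^2}\le\tfrac{169}{16}$. Viewing the right-hand side as a bounded source term, the interior gradient estimate \cite[Theorem~3.9]{GiTr}, applied exactly as in the proof of Lemma~\ref{estigradhessloin}, gives $|\nabla w(0)|\le C\|w\|_{L^\infty(B_1)}\le C$; applying it again componentwise to $\nabla w$ (which solves $-\Delta(\nabla w)=-\tfrac{r^2}{4\varepsilon^2}\nabla w$ in $B_1$) on the smaller ball $B_{1/2}$ gives $|\nabla^2 w(0)|\le C$ for a universal constant $C$. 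Since $|\nabla w(0)|=r\,|\nabla u(x_0)|$ and $|\nabla^2 w(0)|=r^2\,|\nabla^2 u(x_0)|$, the announced bounds follow with $r=d_0/2$.

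\textbf{Boundary case: $B(x_0,d_0/2)\not\subset\Omega$.} Then ${\rm dist}(x_0,\partial\Omega)<d_0/2$; since $\Gamma(\overrightarrow{\boldsymbol{\gamma}})\subset\overline\Omega_0$ and ${\rm dist}(\overline\Omega_0,\partial\Omega)=\boldsymbol{\eta}_0$, the triangle inequality yields $\boldsymbol{\eta}_0\le d_0+{\rm dist}(x_0,\partial\Omega)<\tfrac32 d_0\le\tfrac{39}{2}\varepsilon$, hence
\begin{equation*}
\varepsilon\ge\frac{2}{39}\,\boldsymbol{\eta}_0\qquad\text{and}\qquad d_0>\frac{2}{3}\,\boldsymbol{\eta}_0.
\end{equation*}
In particular the coefficient $\tfrac1{4\varepsilon^2}$ in the equation $-\Delta u=\tfrac1{4\varepsilon^2}(1-u)$ is bounded by a constant depending only on $\boldsymbol{\eta}_0$, and this equation holds classically on the ball $B(x_0,\boldsymbol{\eta}_0/6)$, which stays at distance at least $\boldsymbol{\eta}_0/2$ from $\Gamma(\overrightarrow{\boldsymbol{\gamma}})$ (because $d_0>\tfrac23\boldsymbol{\eta}_0$), the smooth boundary $\partial\Omega$ being the only part of $\partial\Omega\cup\Gamma(\overrightarrow{\boldsymbol{\gamma}})$ it may intersect, on which $u\equiv1$. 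Standard interior Schauder estimates (when $B(x_0,\boldsymbol{\eta}_0/6)\subset\Omega$) or boundary Schauder estimates near $\partial\Omega$ (using the smoothness of $\partial\Omega$ together with $u\equiv1$ there), bootstrapped once for the Hessian as in Lemma~\ref{estigradhessloin}, then give $|\nabla u(x_0)|+|\nabla^2 u(x_0)|\le C_{\boldsymbol{\eta}_0}$. Finally $d_0\le13\varepsilon<13$ gives $1/d_0>1/13$ and $1/d_0^2>1/169$, so this plain bound implies $|\nabla u(x_0)|\le C_{\boldsymbol{\eta}_0}/d_0$ and $|\nabla^2 u(x_0)|\le C_{\boldsymbol{\eta}_0}/d_0^2$ after renaming the constant.

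\textbf{Main obstacle.} The interior case is a routine rescaling, and the only delicate point is the boundary case, since $\varepsilon$ is not assumed small: the crux is the elementary observation that the boundary geometry then forces $\varepsilon$ to be bounded below in terms of $\boldsymbol{\eta}_0$ (so that the PDE has bounded coefficients) while $d_0$ is comparable to a fixed constant, which reduces the scale-invariant estimates to ordinary a priori bounds furnished by classical Schauder theory up to the boundary.
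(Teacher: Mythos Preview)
Your proof is correct and follows the same core idea as the paper---local interior elliptic estimates for the linear equation $-\varepsilon^2\Delta u=\tfrac14(1-u)$ on a ball of radius comparable to $d_0$---but the implementation differs in two respects worth recording.

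First, in the interior case you rescale to the unit ball and invoke \cite[Theorem~3.9]{GiTr} (the coefficient $r^2/(4\varepsilon^2)$ staying bounded since $d_0\le 13\varepsilon$), whereas the paper works directly at the physical scale in the full ball $B(x_0,R)$ with $R=d_0$ and applies the pointwise gradient estimate of Bethuel--Brezis--H\'elein \cite[Lemma~A.1]{BBH1}, then differentiates and re-applies it for the Hessian. Both routes give the same bounds; yours is a touch more elementary in that it avoids the external reference \cite{BBH1}.

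Second, the paper disposes of the possibility that $B(x_0,d_0)$ meets $\partial\Omega$ by reducing at the outset to $\varepsilon<\boldsymbol{\eta}_0/26$ (so that $d_0\le 13\varepsilon<\boldsymbol{\eta}_0/2$ forces ${\rm dist}(x_0,\partial\Omega)>\boldsymbol{\eta}_0/2$), deferring the large-$\varepsilon$ regime with a one-line reference to Lemma~\ref{estigradhessloin}. Your decomposition is instead geometric: you split on whether $B(x_0,d_0/2)\subset\Omega$, and in the complementary ``boundary'' case you extract the quantitative consequence $\varepsilon\ge \tfrac{2}{39}\boldsymbol{\eta}_0$ and $d_0>\tfrac23\boldsymbol{\eta}_0$, which makes the equation uniformly elliptic with bounded right-hand side on a ball of fixed radius touching only $\partial\Omega$, so that standard Schauder estimates (interior or up to the smooth boundary where $u\equiv1$) finish the job. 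This treatment is more self-contained than the paper's, and in fact supplies precisely the argument that the paper's terse reduction leaves implicit.
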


\begin{proof}
By Lemma \ref{estigradhessloin}, we can assume that $\varepsilon<\boldsymbol{\eta}_0/26$. Then ${\rm dist}(x_0,\partial\Omega)>\boldsymbol{\eta}_0/2$, and  
setting $R:={\rm dist}(x_0,\Gamma(\overrightarrow{\boldsymbol{\gamma}}))\leq 13\varepsilon$, we have $B(x_0,R)\subset\Omega$.  

Since $-\Delta u_{\overrightarrow{\boldsymbol{\gamma}}}=1/(4\varepsilon^2) (1-u_{\overrightarrow{\boldsymbol{\gamma}}})$ in $B(x_0,R)$ and $0\leq u_{\overrightarrow{\boldsymbol{\gamma}}}\leq 1$, we deduce from \cite[Lemma~A.1]{BBH1}  that for $x\in B(x_0,R/2)$, 
$$|\nabla u_{\overrightarrow{\boldsymbol{\gamma}}}(x)|^2\leq C\left( \frac{\|1-u_{\overrightarrow{\boldsymbol{\gamma}}}\|_{L^\infty(B(x_0,R))}}{\varepsilon^2}+\frac{\|u_{\overrightarrow{\boldsymbol{\gamma}}}\|_{L^\infty(B(x_0,R))}}{(R-|x-x_0|)^2}\right)\|u_{\overrightarrow{\boldsymbol{\gamma}}}\|_{L^\infty(B(x_0,R))}
\leq \frac{C}{R^2} \,,$$
for some universal constant $C$. Now, the gradient vector field $\nabla u_{\overrightarrow{\boldsymbol{\gamma}}}$ satisfies the equation 
$$-\Delta (\nabla u_{\overrightarrow{\boldsymbol{\gamma}}})=-\frac{1}{4\varepsilon^2}\nabla u_{\overrightarrow{\boldsymbol{\gamma}}}\quad\text{in $B(x_0,R)$}\,,$$ 
and $\|\nabla u_{\overrightarrow{\boldsymbol{\gamma}}}\|_{L^\infty(B(x_0,R/2))}\leq CR^{-1}$. Applying again  \cite[Lemma A.1]{BBH1} in $B(x_0,R/2)$ to each component of $\nabla u_{\overrightarrow{\boldsymbol{\gamma}}}$ leads to 
$$ |\nabla^2 u_{\overrightarrow{\boldsymbol{\gamma}}}(x_0)|^2\leq C\left( \frac{1}{\varepsilon^2}+\frac{1}{R^2}\right)\|\nabla u_{\overrightarrow{\boldsymbol{\gamma}}}\|^2_{L^\infty(B(x_0,R/2))}\leq \frac{C}{R^4} \,,$$
and the proof is complete. 
\end{proof}

\begin{lemma}\label{lemgeom}
Let $\overrightarrow{\boldsymbol{\gamma}}\in\mathscr{P}(a_0,\mu)$. For every $\rho>0$, there exists a finite covering of $\Gamma(\overrightarrow{\boldsymbol{\gamma}})$ by closed balls $\{\overline B_j(x_j,\rho)\}_{j\in J}$ with $x_j\in \Gamma(\overrightarrow{\boldsymbol{\gamma}})$ such that 
$${\rm Card}(J)\leq \max\Big\{\min\big\{5\mathcal{H}^1(\Gamma(\overrightarrow{\boldsymbol{\gamma}}))\rho^{-1},25{\rm diam}(\Gamma(\overrightarrow{\boldsymbol{\gamma}}))^2\rho^{-2} \big\},1\Big\}\,.$$ 
In particular, 
$$ \mathcal{L}^2\Big(\big\{x\in\mathbb{R}^2:{\rm dist}(x,\Gamma(\overrightarrow{\boldsymbol{\gamma}}))\leq \rho\big\}\Big)\leq \max\Big\{20\pi\mathcal{H}^1(\Gamma(\overrightarrow{\boldsymbol{\gamma}}))\rho, 4\pi\rho^2\Big\}\,.$$
\end{lemma}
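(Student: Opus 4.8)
The plan is to produce the covering from a maximal $\rho$-separated subset of $K:=\Gamma(\overrightarrow{\boldsymbol{\gamma}})$, and to estimate its cardinality in two complementary ways: one exploiting the connectedness of $K$ together with $\mathcal{H}^1(K)$, the other a crude Lebesgue-volume packing bound. The asserted inequality is then simply the better of the two (the outer $\max\{\cdot,1\}$ taking care of degenerate cases).

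First I would pick, using that $K$ is compact and hence totally bounded, a finite family $\{x_j\}_{j\in J}\subset K$ that is maximal for the property $|x_i-x_j|\geq\rho$ whenever $i\neq j$. Maximality immediately gives the covering $K\subset\bigcup_{j\in J}\overline B(x_j,\rho)$: any $y\in K$ distinct from all the $x_j$ must fail the separation condition with some $x_j$, i.e.\ $|y-x_j|<\rho$. I would also record that, for every $r<\rho/2$, the closed balls $\overline B(x_j,r)$ are pairwise disjoint (since $2r<\rho\leq|x_i-x_j|$), a fact used for both cardinality bounds; likewise the open balls $B(x_j,\rho/2)$ are pairwise disjoint.

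Next, assuming ${\rm Card}(J)\geq 2$ (when ${\rm Card}(J)=1$ there is nothing to prove), so that ${\rm diam}(K)\geq\rho$, I would derive the two bounds. For the $\mathcal{H}^1$-bound I would invoke the standard fact that a connected set $E$ with $x\in E$ and $E\not\subset\overline B(x,r)$ satisfies $\mathcal{H}^1(E\cap\overline B(x,r))\geq r$ — the point being that $z\mapsto|z-x|$ is $1$-Lipschitz and maps the connected set $E$ onto an interval containing $[0,r]$, hence maps $E\cap\overline B(x,r)$ onto $[0,r]$. Applying this at each $x_j$ with a radius $r<\rho/2$ (some other $x_i$ sits at distance $\geq\rho>r$ from $x_j$) and summing over the disjoint balls $\overline B(x_j,r)$, one gets ${\rm Card}(J)\,r\leq\mathcal{H}^1(K)$, and letting $r\uparrow\rho/2$ yields ${\rm Card}(J)\leq 2\mathcal{H}^1(K)\rho^{-1}$. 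For the diameter bound, writing $d:={\rm diam}(K)\geq\rho$, every ball $B(x_j,\rho/2)$ lies in $B(x_1,\rho/2+d)\subset B(x_1,\tfrac32 d)$, so disjointness together with comparison of Lebesgue measures gives ${\rm Card}(J)\,\pi(\rho/2)^2\leq\pi(\tfrac32 d)^2$, i.e.\ ${\rm Card}(J)\leq 9\,d^2\rho^{-2}$. Taking the minimum and using $2\leq 5$, $9\leq 25$ gives the first assertion.

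Finally, for the $\mathcal{L}^2$-estimate I would observe that ${\rm dist}(x,K)\leq\rho$ forces $x$ to be within $\rho$ of some point of $K$, which lies in some $\overline B(x_j,\rho)$, whence $x\in\overline B(x_j,2\rho)$; thus the $\rho$-neighbourhood of $K$ is contained in $\bigcup_{j\in J}\overline B(x_j,2\rho)$, of total area $4\pi\rho^2\,{\rm Card}(J)\leq 4\pi\rho^2\max\{5\mathcal{H}^1(K)\rho^{-1},1\}=\max\{20\pi\mathcal{H}^1(K)\rho,4\pi\rho^2\}$. The only genuinely non-formal step is the connectedness lower bound $\mathcal{H}^1(E\cap\overline B(x,r))\geq r$; the rest is routine packing and measure comparison. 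The one bookkeeping subtlety — keeping the balls used for the $\mathcal{H}^1$-sum honestly disjoint — is precisely why one should work with radii strictly below $\rho/2$ and pass to the limit, rather than with $\overline B(x_j,\rho/2)$ directly.
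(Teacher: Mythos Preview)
Your proof is correct and follows essentially the same strategy as the paper: build a covering by $\rho$-balls whose smaller concentric balls are disjoint, then bound the cardinality once via the connectedness lower bound $\mathcal{H}^1(K\cap \overline B(x_j,r))\geq r$ and once via a planar packing/volume comparison. The only cosmetic difference is that the paper invokes the $5r$-covering lemma (yielding disjoint balls of radius $\rho/5$ and the constants $5$, $25$ directly), whereas you use a maximal $\rho$-separated set (disjoint balls of radius $<\rho/2$), which actually gives the sharper constants $2$ and $9$ before relaxing.
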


\begin{proof}
If $\rho\geq  {\rm diam}(\Gamma(\overrightarrow{\boldsymbol{\gamma}}))$, then we can cover $\Gamma(\overrightarrow{\boldsymbol{\gamma}})$ with the single ball $\overline B(a_0,\rho)$, and the announced estimates become trivial. Hence we can assume that $\rho<  {\rm diam}(\Gamma(\overrightarrow{\boldsymbol{\gamma}}))$. 
By compactness of $\Gamma(\overrightarrow{\boldsymbol{\gamma}})$, we can cover $\Gamma(\overrightarrow{\boldsymbol{\gamma}})$ with a finite collection of closed balls $\{\overline B(x_j,\rho/5)\}_{j\in \widetilde J}$  such that 
$x_j\in \Gamma(\overrightarrow{\boldsymbol{\gamma}})$.  
By the $5r$-covering theorem (see for instance \cite{Mattila}), we can find a subset $ J\subset \widetilde J$ such that $\overline B(x_i,\rho/5)\cap \overline B(x_j,\rho/5)=\emptyset$ if $i\not=j$ with $i,j\in J$, and  
$$\Gamma(\overrightarrow{\boldsymbol{\gamma}})\subset \bigcup_{j\in  J} \overline B(x_j,\rho)\,.$$
In particular,
$$\bigcup_{j\in  J} \overline B(x_j,\rho/5)\subset\Big\{x\in\mathbb{R}^2:{\rm dist}(x,\Gamma(\overrightarrow{\boldsymbol{\gamma}}))\leq \rho\Big\}\subset  \bigcup_{j\in  J} \overline B(x_j,2\rho)\,,$$
so that 
$$\frac{\pi}{25}\rho^2\,{\rm Card}(J) \leq  \mathcal{L}^2\Big(\big\{x\in\mathbb{R}^2:{\rm dist}(x,\Gamma(\overrightarrow{\boldsymbol{\gamma}}))\leq \rho\big\}\Big)\leq 4\pi\rho^2\,{\rm Card}(J)\,.$$
From the first inequality, we easily deduce that ${\rm Card}(J) \leq 25 {\rm diam}(\Gamma(\overrightarrow{\boldsymbol{\gamma}}))^2\rho^{-2} $. 

Next we claim that for each $j\in  J$, 
\begin{equation}\label{lwalf}
 \mathcal{H}^1(\Gamma(\overrightarrow{\boldsymbol{\gamma}})\cap B(x_j,\rho/5))\geq \rho/5\,. 
 \end{equation}
Note that this estimate leads to the announced result since 
$$\mathcal{H}^1(\Gamma(\overrightarrow{\boldsymbol{\gamma}}))\geq \sum_{j\in J}  \mathcal{H}^1(\Gamma(\overrightarrow{\boldsymbol{\gamma}})\cap B(x_j,\rho/5))\geq {\rm Card}(J)\rho/5\,. $$
To prove \eqref{lwalf}, we argue as follows. Since $\rho< {\rm diam}(\Gamma(\overrightarrow{\boldsymbol{\gamma}}))$, there exists a point $y_j\in \Gamma(\overrightarrow{\boldsymbol{\gamma}})\setminus B(x_j,\rho/5)$. On the other hand, the set $\Gamma(\overrightarrow{\boldsymbol{\gamma}})$ is arcwise connected since $\gamma_i(0)=a_0$ for each $i\in\{1,\ldots,N\}$. Hence, we can find a continuous path $\ell:[0,1]\to \Gamma(\overrightarrow{\boldsymbol{\gamma}})$ such that $\ell(0)=x_j$ and $\ell(1)=y_j$. Set 
$$t_*:=\sup\Big\{t: t\in[0,1]\text{ and } \ell(s)\in B(x_j,\rho/5)\text{ for every $s\in[0,t]$}  \Big\}\,. $$
By continuity of $\ell$, we have  $\ell(t_*)\in \partial B(x_j,\rho/5)$. Consequently, 
$$ \mathcal{H}^1(\Gamma(\overrightarrow{\boldsymbol{\gamma}})\cap B(x_j,\rho/5))\geq  \mathcal{H}^1\big(\ell([0,t_*))\big)\geq |\ell(t_*)-\ell(0)|=\rho/5\,,$$
which completes the proof.  
\end{proof}

We are now ready to prove the following higher integrability estimate, with explicit control with respect to the parameters. Here, the main point is the uniformity of the estimate with respect to $\mu/\|\mu\|$. The explicit dependence  with respect to $\varepsilon$ will be (strongly) used in the second part of our work \cite{BLM2}. 

\begin{proposition}\label{W1p}
If $\overrightarrow{\boldsymbol{\gamma}}\in \mathscr{P}_\Lambda(a_0,\mu)$, then $u_{\overrightarrow{\boldsymbol{\gamma}}}\in W^{1,p}(\Omega)$ for every $p<\infty$, and for $p>2$, 
\begin{multline*}
\|\nabla u_{\overrightarrow{\boldsymbol{\gamma}}}\|_{L^p(\Omega)}\\
\leq  C_{p,\boldsymbol{\eta}_0}\max\Big\{\!\min\big\{\mathcal{H}^{1}(\Gamma(\overrightarrow{\boldsymbol{\gamma}})), \frac{1}{\eps|\log\eps|} \big\},\varepsilon|\log\varepsilon|\Big\}^{1/p}\bigg(\frac{|\log\eps|^{1+1/p}}{\eps^{1-1/p}}\\+\frac{\Lambda\|\mu\||\log \varepsilon|^{1/p}}{\lambda_\varepsilon\varepsilon^{1-1/p}}\bigg)\,,
\end{multline*}
for some constant $C_{p,\boldsymbol{\eta}_0}$ depending only on $p$, $\Omega$, and  $\boldsymbol{\eta}_0$ (given in Lemma \ref{estigradhessloin}). 
\end{proposition}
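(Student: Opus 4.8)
The proof rests on the Euler--Lagrange equation \eqref{eqEL}, which I would first rewrite as $-\varepsilon^2\Delta u_{\overrightarrow{\boldsymbol{\gamma}}}=\frac14(1-u_{\overrightarrow{\boldsymbol{\gamma}}})-\frac{\varepsilon}{\lambda_\varepsilon}\nu$ in $\Omega$, where $\nu:=\sum_{i=1}^N\beta_i\,u_{\overrightarrow{\boldsymbol{\gamma}}}\,\mathcal{H}^1\res\Gamma(\gamma_i)$ is a nonnegative finite measure. Since $0\le u_{\overrightarrow{\boldsymbol{\gamma}}}\le1$ (Lemma \ref{bound1}), $\sum_i\beta_i=\|\mu\|$, and ${\bf Al}(\Gamma(\gamma_i))\le\Lambda$, the measure $\nu$ is Ahlfors upper $1$-regular, $\nu(B(x,r))\le C\Lambda\|\mu\|\,r$ for all $x$, $r$; and by \eqref{estiZiem} together with H\"older's inequality (extending $W^{1,p'}_0(\Omega)$-functions by zero outside $\Omega$) one gets $\|\nu\|_{W^{-1,p}(\Omega)}\le C_{p,\Omega}\Lambda\|\mu\|$ for every $p\in(1,\infty)$. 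Since $\frac14(1-u_{\overrightarrow{\boldsymbol{\gamma}}})\in L^\infty(\Omega)\hookrightarrow W^{-1,p}(\Omega)$, the right-hand side lies in $W^{-1,p}(\Omega)$ for all $p<\infty$, so $L^p$-elliptic regularity for $-\Delta$ applied to $u_{\overrightarrow{\boldsymbol{\gamma}}}-1\in W^{1,2}_0(\Omega)$ already yields $u_{\overrightarrow{\boldsymbol{\gamma}}}\in W^{1,p}(\Omega)$ for every $p<\infty$. For the \emph{quantitative} bound one cannot use this global estimate as is (it loses a power $\varepsilon^{-1}$ because it ignores the exponential smallness of $1-u_{\overrightarrow{\boldsymbol{\gamma}}}$ away from $\Gamma(\overrightarrow{\boldsymbol{\gamma}})$); instead I would split $\Omega$ according to the distance to $\Gamma(\overrightarrow{\boldsymbol{\gamma}})$.

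Assume $\varepsilon$ small, set $\mathcal{N}_\varepsilon:=\{x\in\Omega:{\rm dist}(x,\Gamma(\overrightarrow{\boldsymbol{\gamma}}))<13\varepsilon\}$ and $\mathcal{F}_\varepsilon:=\Omega\setminus\mathcal{N}_\varepsilon$, and write $M$ for the quantity $\max\{\min\{\mathcal{H}^1(\Gamma(\overrightarrow{\boldsymbol{\gamma}})),(\varepsilon|\log\varepsilon|)^{-1}\},\varepsilon|\log\varepsilon|\}$ appearing in the statement. From Lemma \ref{lemgeom} one records the elementary estimates $\mathcal{L}^2(\mathcal{N}_\varepsilon)\le C_\Omega\varepsilon|\log\varepsilon|\,M$ and $\min\{\mathcal{H}^1(\Gamma(\overrightarrow{\boldsymbol{\gamma}}))+\varepsilon,\,C_\Omega\varepsilon^{-1}\}\le C_\Omega|\log\varepsilon|\,M$. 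On $\mathcal{F}_\varepsilon$ I would use the decay estimate of Lemma \ref{estigradhessloin}, $|\nabla u_{\overrightarrow{\boldsymbol{\gamma}}}(x)|\le C_{\boldsymbol{\eta}_0}\varepsilon^{-1}\exp(-{\rm dist}(x,\Gamma(\overrightarrow{\boldsymbol{\gamma}}))/32\varepsilon)$, raise it to the power $p$, and integrate by parts in the layer-cake representation against the distribution function $t\mapsto\mathcal{L}^2(\{{\rm dist}(\cdot,\Gamma(\overrightarrow{\boldsymbol{\gamma}}))\le t\})$, bounded via Lemma \ref{lemgeom} by $\min\{\mathcal{L}^2(\Omega),\max\{20\pi\mathcal{H}^1(\Gamma(\overrightarrow{\boldsymbol{\gamma}}))t,4\pi t^2\}\}$. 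This gives $\|\nabla u_{\overrightarrow{\boldsymbol{\gamma}}}\|_{L^p(\mathcal{F}_\varepsilon)}^p\le C_{p,\boldsymbol{\eta}_0}\varepsilon^{1-p}\min\{\mathcal{H}^1(\Gamma(\overrightarrow{\boldsymbol{\gamma}}))+\varepsilon,C_\Omega\varepsilon^{-1}\}\le C_{p,\boldsymbol{\eta}_0}\varepsilon^{1-p}|\log\varepsilon|\,M$, hence a bound dominated by the first term in the statement.

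The core of the argument is the estimate on $\mathcal{N}_\varepsilon$, where no uniform gradient bound can hold (this is why $u_{\overrightarrow{\boldsymbol{\gamma}}}$ is in general not Lipschitz). I would cover $\mathcal{N}_\varepsilon$ by balls $B(x_j,26\varepsilon)$, $j\in J$, with $x_j\in\Gamma(\overrightarrow{\boldsymbol{\gamma}})$ and $\#J$ controlled by Lemma \ref{lemgeom}, so that $\varepsilon^2\#J\le C_\Omega\varepsilon|\log\varepsilon|\,M$. On each enlarged ball $B(x_j,52\varepsilon)\subset\Omega$ I would rescale by $\varepsilon$: the function $v_j(y):=u_{\overrightarrow{\boldsymbol{\gamma}}}(x_j+\varepsilon y)$ solves $-\Delta v_j=\frac14(1-v_j)-\frac1{\varepsilon\lambda_\varepsilon}\widetilde\nu_j$ on $B(0,52)$, where the push-forward measure $\widetilde\nu_j$ still satisfies $\widetilde\nu_j(B(y,r))\le C\Lambda\|\mu\|\varepsilon\,r$ by scale-invariance of the Ahlfors bound. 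Using $\|v_j\|_{L^\infty(B(0,52))}\le1$, the Meyers--Ziemer bound $\|\widetilde\nu_j\|_{W^{-1,p}(B(0,52))}\le C_p\Lambda\|\mu\|\varepsilon$, and interior Calder\'on--Zygmund estimates for $-\Delta$, one obtains $\|\nabla v_j\|_{L^p(B(0,26))}\le C_p(1+\Lambda\|\mu\|\lambda_\varepsilon^{-1})$; scaling back, $\|\nabla u_{\overrightarrow{\boldsymbol{\gamma}}}\|_{L^p(B(x_j,26\varepsilon))}\le C_p\varepsilon^{2/p-1}(1+\Lambda\|\mu\|\lambda_\varepsilon^{-1})$. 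Summing over $j\in J$ and inserting $\varepsilon^2\#J\le C_\Omega\varepsilon|\log\varepsilon|\,M$ gives $\|\nabla u_{\overrightarrow{\boldsymbol{\gamma}}}\|_{L^p(\mathcal{N}_\varepsilon)}\le C_{p,\Omega}|\log\varepsilon|^{1/p}M^{1/p}\varepsilon^{-1+1/p}(1+\Lambda\|\mu\|\lambda_\varepsilon^{-1})$, which together with the $\mathcal{F}_\varepsilon$-estimate yields the claim (in fact with $|\log\varepsilon|^{1/p}$ in place of $|\log\varepsilon|^{1+1/p}$ in the first summand, which is more than enough).

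I expect the near-zone step to be the only genuine difficulty. Because the forcing $\nu$ is merely a measure, the argument depends on (i) the Meyers--Ziemer/trace estimate \eqref{estiZiem}, which places $\nu$ in $W^{-1,p}$ with constant linear in $\Lambda\|\mu\|$ (this is exactly the borderline integrability that gives $W^{1,p}$ for all finite $p$ but not $W^{1,\infty}$), and (ii) the scale-invariance of the Ahlfors bound on $\nu$, which lets the rescaled local estimates be reassembled into a global one with the correct powers of $\varepsilon$ and $\lambda_\varepsilon$. Carrying these powers faithfully through the rescaling and the covering count of Lemma \ref{lemgeom} is the technical heart; the far-zone contribution, by contrast, is a routine layer-cake computation built on Lemmas \ref{comp} and \ref{estigradhessloin}, and the ``for $\varepsilon$ small'' reduction is handled separately by global $L^p$ estimates up to the smooth boundary $\partial\Omega$.
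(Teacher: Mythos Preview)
Your proof is correct and follows the same overall strategy as the paper: localize via the Euler--Lagrange equation, control the measure forcing through the Meyers--Ziemer estimate~\eqref{estiZiem}, rescale, and reassemble via the covering of Lemma~\ref{lemgeom}, treating the far zone with Lemma~\ref{estigradhessloin}. The differences are tactical. You split near/far at distance $13\varepsilon$, which forces a layer-cake integration on $\mathcal F_\varepsilon$; the paper splits at $32\varepsilon|\log\varepsilon|$, so that on the far side $|\nabla u_{\overrightarrow{\boldsymbol{\gamma}}}|\le C_{\boldsymbol\eta_0}$ pointwise and nothing more is needed. Conversely, on the near side you invoke interior $W^{1,p}$ (Calder\'on--Zygmund) estimates for $-\Delta$ directly, while the paper explicitly constructs a potential $v_\rho$ with $-\Delta v_\rho=T_\rho$ and $\|v_\rho\|_{W^{1,p}}\le C_p\Lambda\rho$, subtracts it off, and uses $C^{1,\alpha}$ bounds on the regular remainder $w_\rho$. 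Both routes give the same local estimate; your version is a bit more streamlined and, because you rescale by $\varepsilon$ rather than $\varepsilon|\log\varepsilon|$, you avoid the $\rho^2/\varepsilon^2$ loss in the paper's Step~3, which is why you end up with $|\log\varepsilon|^{1/p}$ instead of $|\log\varepsilon|^{1+1/p}$ in the first summand.
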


\begin{proof}
{\it Step 1.} Replacing $\lambda_\eps$ by $\lambda_\eps/\|\mu\|$ and $\mu$ by $\mu/\|\mu\|$, we may assume that $\|\mu\|=1$. Without loss of generality, we can also assume that $\varepsilon|\log\varepsilon|<\boldsymbol{\eta}_0/256$.  
Let us fix some point $x_0\in\overline\Omega_0$ and $0<\rho< \boldsymbol{\eta}_0/4$.  
Let $T_\rho\in\mathscr{D}^\prime(\mathbb{R}^2)$ be the distribution defined by
$$\langle T_\rho,\varphi\rangle:= \sum_{i=1}^N\beta_i\int_{\Gamma(\gamma_i)}u_{\overrightarrow{\boldsymbol{\gamma}}}\varphi_\rho\,d\mathcal{H}^1=B_\mu[\overrightarrow{\boldsymbol{\gamma}}](u_{\overrightarrow{\boldsymbol{\gamma}}},\varphi_\rho)\,,$$
where $\varphi_\rho(x):=\varphi((x-x_0)/\rho)$ and $\varphi\in C^\infty_c(\mathbb{R}^2)$. 

 By Lemma \ref{bound1} and \eqref{estiZiem}, for every $\varphi\in C^\infty_c(B_2)$ we have 
$$\big|\big\langle T_\rho,\varphi\rangle\big| \leq\sum_{i=1}^N\beta_i\int_{\Gamma(\gamma_i)}|\varphi_\rho|\,d\mathcal{H}^1\leq C\Lambda\int_{B(x_0,2\rho)}|\nabla\varphi_\rho|\,dx
= C\Lambda \rho\int_{B(0,2)}|\nabla\varphi|\,dx \,.$$
Then we infer from H\"older's inequality that
$$\big|\big\langle T_\rho,\varphi\rangle\big| \leq C\Lambda \rho \|\nabla\varphi\|_{L^q(B_2)}\quad \forall \varphi\in C^\infty_c(B_2)\,,\forall \,1\leq q \leq 2\,.$$
Therefore $T_\rho\in W^{-1,p}(B_2)$ with 
$$\|T_\rho\|_{W^{-1,p}(B_2)}\leq   C\Lambda \rho$$
for every $2\leq p<\infty$.
\vskip3pt

\noindent{\it Step 2.} Let us now fix the exponent $2< p<\infty$.  By our choice of $\rho$, we have $B(x_0,2\rho)\subset\Omega$. 
As a consequence of Step 1, there exists a vector field $f\in L^p(B_2;\mathbb{R}^2)$ such that ${\rm div}\,f=T_\rho$ in $\mathscr{D}^\prime(B_2)$ and satisfying 
$$C_p^{-1}\|T_\rho\|_{W^{-1,p}(B_2)}\leq \|f\|_{L^p(B_2)}\leq C_p\|T_\rho\|_{W^{-1,p}(B_2)}$$
(see e.g. \cite[Sections 3.7 to 3.14]{AdF}). By classical elliptic theory (see e.g. \cite[Theorem~9.15 and Lemma 9.17]{GiTr}, there exists a (unique) solution $\xi\in W^{2,p}(B_2)\cap W^{1,p}_0(B_2)$ of 
$$\begin{cases}
-\Delta \xi=f &\text{in $B_2$}\,,\\
\xi=0 & \text{on $\partial B_2$}\,,
\end{cases}$$
satisfying the estimate
$$\|\xi\|_{W^{2,p}(B_2)}\leq C_p \|f\|_{L^p(B_2)}\leq C_p\Lambda\rho\,, $$
thanks to Step 1.

Now we define $v_\rho:={\rm div}\,\xi\in W^{1,p}(B_2)$ which satisfies 
$$-\Delta v_\rho=T_\rho \quad\text{in $\mathscr{D}^\prime(B_2)$}\,,$$
together with the estimate
$$\|v_\rho\|_{W^{1,p}(B_2)}\leq C_p \Lambda\rho\,. $$
Notice that, by the Sobolev embedding Theorem, $v_\rho\in L^\infty(B_2)$ and 
\begin{equation}\label{coucou1124}
\|v_\rho\|_{L^{\infty}(B_2)}\leq C_p\|v_\rho\|_{W^{1,p}(B_2)}\leq C_p\Lambda\rho\,.
\end{equation}
\vskip3pt

\noindent{\it Step 3.} Next we define for $x\in B_2$, $u_\rho(x):=u_{\overrightarrow{\boldsymbol{\gamma}}}(x_0+\rho x)$. Notice that  
$$-\Delta u_\rho=\frac{\rho^2}{4\varepsilon^2}(1-u_\rho) -\frac{1}{\lambda_\varepsilon\varepsilon}T_\rho\quad \text{in $\mathscr{D}^\prime(B_2)$}\,.$$
Indeed, for $\varphi\in C^\infty_c(B_2)$ we have 
\begin{align*}
\int_{B_2}\nabla u_\rho\nabla\varphi\,dx&=\int_{B(x_0,2\rho)}\nabla u_{\overrightarrow{\boldsymbol{\gamma}}}\nabla\varphi_\rho\,dx\\
&=\frac{1}{4\varepsilon^2}\int_{B(x_0,2\rho)}(1-u_{\overrightarrow{\boldsymbol{\gamma}}})\varphi_\rho\,dx-\frac{1}{\lambda_\eps\varepsilon}B[\overrightarrow{\boldsymbol{\gamma}}](u_{\overrightarrow{\boldsymbol{\gamma}}},\varphi_\rho)\\
&=\frac{\rho^2}{4\varepsilon^2}\int_{B_2}(1-u_\rho)\varphi\,dx- \frac{1}{\lambda_\eps\varepsilon}\langle T_\rho,\varphi\rangle\,.
\end{align*}
Consider the function 
$w_\rho:=u_\rho+\frac{1}{\lambda_\varepsilon\varepsilon}v_\rho \in H^1(B_2)\cap L^\infty(B_2)$  
which (therefore) satisfies
$$-\Delta w_\rho= \frac{\rho^2}{4\varepsilon^2}(1-u_\rho)\quad\text{in $B_2$} \,.$$
By \cite[Corollary 8.36]{GiTr}, $w_\rho\in C^{1,\alpha}_{\rm loc}(B_2)$ for some $\alpha>0$, and 
$$\|\nabla w_\rho\|_{L^\infty(B_1)}\leq C\left(\frac{\rho^2}{\varepsilon^2}\|1-u_\rho\|_{L^\infty(B_2)}+\|w_\rho\|_{L^\infty(B_2)}\right)\leq C_p\left(\frac{\rho^2}{\varepsilon^2}+1+\frac{\Lambda\rho}{\lambda_\varepsilon\varepsilon}\right)\,,$$
in view of \eqref{coucou1124} and the fact that $0\leq u_\rho\leq 1$.  Going back to $u_\rho=w_\rho-\frac{1}{\lambda_\varepsilon\varepsilon}v_\rho$, we deduce that $u_\rho\in W^{1,p}(B_1)$ with the estimate
\begin{equation}\label{estiurhoW1p}
\|\nabla u_\rho\|_{L^p(B_1)}\leq \|\nabla w_\rho\|_{L^\infty(B_1)}+\frac{\|\nabla v_\rho\|_{L^p(B_1)}}{\lambda_\varepsilon\varepsilon}\leq C_p\left(\frac{\rho^2}{\varepsilon^2}+1+\frac{\Lambda\rho}{\lambda_\varepsilon\varepsilon}\right)\,.
\end{equation}
Scaling back we finally obtain 
\begin{equation}\label{esti1103}
\|\nabla u_{\overrightarrow{\boldsymbol{\gamma}}}\|^p_{L^p(B(x_0,\rho))}\leq C_p\left(\frac{\rho^{p+2}}{\varepsilon^{2p}}+\frac{1}{\rho^{p-2}}+\frac{\Lambda^p\rho^{2}}{\lambda^p_\varepsilon\varepsilon^p}\right)\,.
\end{equation}
\vskip3pt

\noindent{\it Step 4.} Applying Lemma \ref{lemgeom}, we can cover $\Gamma(\overrightarrow{\boldsymbol{\gamma}})$ by finitely many balls $\{\overline B(x_j,\rho/2)\}_{j\in J}$ with $x_j\in \Gamma(\overrightarrow{\boldsymbol{\gamma}})$ and 
$$\rho\, {\rm Card}(J)\leq C\max\big\{\min\{\mathcal{H}^1(\Gamma(\overrightarrow{\boldsymbol{\gamma}})),\rho^{-1}\},\rho\big\}\,.$$ 
Then, 
$$V_{\rho/2}:=\{x\in\Omega:{\rm dist}(x,\Gamma(\overrightarrow{\boldsymbol{\gamma}}))<\rho/2\}\subset \bigcup_{j\in J} B(x_j,\rho)\,,$$ 
and we deduce from \eqref{esti1103} that 
\begin{multline*}
\int_{V_{\rho/2}} |\nabla u_{\overrightarrow{\boldsymbol{\gamma}}}|^p\,dx\leq \sum_{j\in J} \int_{B(x_j,\rho)} |\nabla u|^p\,dx\\
\leq C_p\max\Big\{\!\min\big\{\mathcal{H}^1(\Gamma(\overrightarrow{\boldsymbol{\gamma}})),\rho^{-1}  \big\},\rho\Big\}\left(\frac{\rho^{p+1}}{\varepsilon^{2p}}+\frac{1}{\rho^{p-3}}+\frac{\Lambda^p\rho}{\lambda^p_\varepsilon\varepsilon^p}\right)\,.
\end{multline*}
In particular, 
 \begin{multline}\label{estinum}
 \|\nabla u_{\overrightarrow{\boldsymbol{\gamma}}}\|_{L^p(V_{\rho/2})} \\
 \leq C_p\max\Big\{\!\min\{\mathcal{H}^{1}(\Gamma(\overrightarrow{\boldsymbol{\gamma}})),\rho^{-1}\},\rho\Big\}^{1/p}\left(\frac{\rho^{1+1/p}}{\varepsilon^{2}}+\frac{1}{\rho^{1-3/p}}+\frac{\Lambda\rho^{1/p}}{\lambda_\varepsilon\varepsilon}\right)\,.
 \end{multline}
Observe that, using the gradient estimate in Lemma  \ref{estigradhessloin}, the choice $\rho=64\varepsilon|\log\varepsilon|$ yields $\big|\nabla u_{\overrightarrow{\boldsymbol{\gamma}}}\big|\leq C_{\boldsymbol{\eta}_0}$ in $\Omega\setminus V_{32\varepsilon|\log\varepsilon|}$. Plugging this value of $\rho$ in  \eqref{estinum}, we deduce that
\begin{multline*}
\|\nabla u_{\overrightarrow{\boldsymbol{\gamma}}}\|_{L^p(V_{32\varepsilon|\log\varepsilon|})}\\ 
\leq C_p\max\Big\{\min\big\{\mathcal{H}^{1}(\Gamma(\overrightarrow{\boldsymbol{\gamma}})), \frac{1}{\eps|\log\eps|} \big\},\varepsilon|\log\varepsilon|\Big\}^{1/p}\bigg(\frac{|\log\eps|^{1+1/p}}{\eps^{1-1/p}}\\+\frac{\Lambda|\log \varepsilon|^{1/p}}{\lambda_\varepsilon\varepsilon^{1-1/p}}\bigg)\,,
\end{multline*}
and the conclusion follows.  
\end{proof}

\begin{proposition}\label{Holder}
If $\overrightarrow{\boldsymbol{\gamma}}\in \mathscr{P}_\Lambda(a_0,\mu)$, then  $u_{\overrightarrow{\boldsymbol{\gamma}}}\in C^{0,\alpha}(\Omega)$  for every $0<\alpha<1$, and 
$$\|u_{\overrightarrow{\boldsymbol{\gamma}}}\|_{C^{0,\alpha}(\Omega)}\leq C_{\alpha,\boldsymbol{\eta}_0}\frac{(1+\Lambda\|\mu\|\lambda^{-1}_\varepsilon)}{\varepsilon^\alpha}\,,$$
for some constant $C_{\alpha,\boldsymbol{\eta}_0}$ depending only on $\alpha$, $\Omega$, and $\boldsymbol{\eta}_0$ (given in Lemma \ref{estigradhessloin}). 
\end{proposition}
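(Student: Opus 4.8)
The plan is to estimate the oscillation of $u_{\overrightarrow{\boldsymbol{\gamma}}}$ at scale $\varepsilon$ by rescaling the Euler--Lagrange equation onto a unit ball, where the relevant linear estimates are scale invariant, and to combine this with Morrey's embedding. Throughout I normalize $\|\mu\|=1$ (replacing $\lambda_\varepsilon$ by $\lambda_\varepsilon/\|\mu\|$, exactly as in Step~1 of the proof of Proposition~\ref{W1p}), fix $\alpha\in(0,1)$ and set $p:=2/(1-\alpha)>2$, so that $\alpha=1-2/p$; I may also assume $\varepsilon$ smaller than a threshold depending only on $\Omega$ and $\boldsymbol{\eta}_0$. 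Since $\|u_{\overrightarrow{\boldsymbol{\gamma}}}\|_{L^\infty(\Omega)}\le 1$ by Lemma~\ref{bound1}, the desired bound $|u_{\overrightarrow{\boldsymbol{\gamma}}}(x)-u_{\overrightarrow{\boldsymbol{\gamma}}}(y)|\le \varepsilon^{-\alpha}|x-y|^\alpha$ is trivial when $|x-y|\ge\varepsilon$. It therefore suffices to treat pairs $x,y\in\overline\Omega$ with $|x-y|<\varepsilon$, and I will distinguish whether $x$ lies away from or close to $\partial\Omega$.

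\emph{Interior points.} If $\mathrm{dist}(x,\partial\Omega)\ge\boldsymbol{\eta}_0/4$ then, for $\varepsilon$ small, $B(x,6\varepsilon)\subset\Omega$. The key observation is that Steps~1--3 of the proof of Proposition~\ref{W1p} only use the inclusion $B(x_0,2\rho)\subset\Omega$ (and not that $x_0\in\overline\Omega_0$), so that estimate~\eqref{estiurhoW1p} applies with $x_0:=x$ and $\rho:=3\varepsilon$: for the rescaled function $u_\rho(z):=u_{\overrightarrow{\boldsymbol{\gamma}}}(x+3\varepsilon z)$ on $B_2$ it reads
$$\|\nabla u_\rho\|_{L^p(B_1)}\le C_p\Big(\tfrac{(3\varepsilon)^2}{\varepsilon^2}+1+\tfrac{3\Lambda}{\lambda_\varepsilon}\Big)\le C_p\Big(1+\tfrac{\Lambda}{\lambda_\varepsilon}\Big)\,.$$
By Morrey's inequality in the form $[u_\rho]_{C^{0,\alpha}(B_{1/2})}\le C_p\|\nabla u_\rho\|_{L^p(B_1)}$, and since $x,y$ correspond under the rescaling to two points of $B_{1/3}\subset B_{1/2}$, one gets
$$|u_{\overrightarrow{\boldsymbol{\gamma}}}(x)-u_{\overrightarrow{\boldsymbol{\gamma}}}(y)|\le [u_\rho]_{C^{0,\alpha}(B_{1/2})}\Big(\tfrac{|x-y|}{3\varepsilon}\Big)^\alpha\le C_p\Big(1+\tfrac{\Lambda}{\lambda_\varepsilon}\Big)\frac{|x-y|^\alpha}{\varepsilon^\alpha}\,.$$

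\emph{Points near the boundary.} If $\mathrm{dist}(x,\partial\Omega)<\boldsymbol{\eta}_0/4$, set $V:=\{y\in\Omega:\mathrm{dist}(y,\partial\Omega)<\boldsymbol{\eta}_0/2\}$, so that $x,y\in V$ since $|x-y|<\varepsilon$ is small. Because $\Gamma(\overrightarrow{\boldsymbol{\gamma}})\subset\overline\Omega_0$ and $\mathrm{dist}(\partial\Omega,\overline\Omega_0)=\boldsymbol{\eta}_0$, every point of $V$ is at distance $\ge\boldsymbol{\eta}_0/2\ge 13\varepsilon$ from $\Gamma(\overrightarrow{\boldsymbol{\gamma}})$; hence Lemma~\ref{estigradhessloin} yields $|\nabla u_{\overrightarrow{\boldsymbol{\gamma}}}|\le \frac{C_{\boldsymbol{\eta}_0}}{\varepsilon}\exp\big(-\frac{\boldsymbol{\eta}_0}{64\varepsilon}\big)\le C_{\boldsymbol{\eta}_0}$ throughout $V$ (the exponential dominating $\varepsilon^{-1}$). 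Using that a bounded smooth domain is quasiconvex, I join $x$ to $y$ inside $\overline\Omega$ by a rectifiable curve $\sigma$ with $\mathcal{H}^1(\sigma)\le C_\Omega|x-y|$; for $\varepsilon$ small this curve stays in $V$, hence avoids $\Gamma(\overrightarrow{\boldsymbol{\gamma}})$, so $u_{\overrightarrow{\boldsymbol{\gamma}}}$ is smooth along it (Lemma~\ref{bound1}). Integrating the gradient bound along $\sigma$ and using $|x-y|\le\varepsilon^{1-\alpha}|x-y|^\alpha$ gives $|u_{\overrightarrow{\boldsymbol{\gamma}}}(x)-u_{\overrightarrow{\boldsymbol{\gamma}}}(y)|\le C_{\boldsymbol{\eta}_0,\Omega}\,\varepsilon^{-\alpha}|x-y|^\alpha$.

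Collecting the three cases bounds the $C^{0,\alpha}$-seminorm of $u_{\overrightarrow{\boldsymbol{\gamma}}}$ on $\Omega$ by $C_{\alpha,\boldsymbol{\eta}_0}(1+\Lambda/\lambda_\varepsilon)\varepsilon^{-\alpha}$; adding $\|u_{\overrightarrow{\boldsymbol{\gamma}}}\|_{L^\infty(\Omega)}\le 1\le\varepsilon^{-\alpha}$ and undoing the normalization $\|\mu\|=1$ (so $\Lambda/\lambda_\varepsilon$ becomes $\Lambda\|\mu\|/\lambda_\varepsilon$) gives the stated estimate, and in particular $u_{\overrightarrow{\boldsymbol{\gamma}}}\in C^{0,\alpha}(\Omega)$. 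The one genuinely delicate point is the choice of working scale $\rho\sim\varepsilon$ in the interior case: this is precisely what keeps the factor $\rho^2/\varepsilon^2$ in~\eqref{estiurhoW1p} bounded and produces the sharp power $\varepsilon^{-\alpha}$ — a fixed scale would give $\varepsilon^{-2}$, and simply invoking the global $W^{1,p}$ bound of Proposition~\ref{W1p} together with Morrey would only give $\varepsilon^{-1}$ up to a logarithm. The near-boundary region, where~\eqref{estiurhoW1p} does not directly apply, presents no real difficulty since there the measure term is absent and Lemma~\ref{estigradhessloin} already provides a bounded gradient on a fixed collar.
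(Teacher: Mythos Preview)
Your proof is correct and follows essentially the same strategy as the paper's: normalize $\|\mu\|=1$, dispose of $|x-y|\ge\varepsilon$ using $0\le u_{\overrightarrow{\boldsymbol{\gamma}}}\le 1$, and for $|x-y|<\varepsilon$ split into an interior case (where one rescales at scale $\rho\sim\varepsilon$ and feeds estimate~\eqref{estiurhoW1p} into Morrey's embedding with $p=2/(1-\alpha)$) and a near-boundary case handled by the gradient bound of Lemma~\ref{estigradhessloin}. The only differences are cosmetic --- the paper centers the rescaling at the midpoint $(x+y)/2$ with $\rho=\varepsilon$ rather than at $x$ with $\rho=3\varepsilon$, and uses the threshold $\boldsymbol{\eta}_0/2$ on the midpoint rather than $\boldsymbol{\eta}_0/4$ on $x$ --- while your treatment of the boundary collar via quasiconvexity is slightly more explicit than the paper's.
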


\begin{proof}
Note that it is enough to prove the announced estimate when $\varepsilon$ is small; thus we can assume that $13\varepsilon<\boldsymbol{\eta}_0/4$. 
Recall that, upon replacing $\lambda_\eps$ by $\lambda_\eps/\|\mu\|$ and $\mu$ by $\mu/\|\mu\|$, we can also assume that $\|\mu\|=1$.
Then we fix some distinct points $x,y\in\Omega$, and we set $x_0:=(x+y)/2$. 

If $|x-y|\geq \varepsilon$, then we have 
$$\frac{|u_{\overrightarrow{\boldsymbol{\gamma}}}(x)-u_{\overrightarrow{\boldsymbol{\gamma}}}(y)|}{|x-y|^\alpha}\leq \frac{2}{\varepsilon^\alpha} \,,$$
since $0\leq u_{\overrightarrow{\boldsymbol{\gamma}}}\leq 1$. 

Now we assume that $|x-y|< \varepsilon$. If ${\rm dist}(x_0,\partial\Omega)\leq \boldsymbol{\eta}_0/2$, then ${\rm dist}(z,\Gamma(\overrightarrow{\boldsymbol{\gamma}}))> \boldsymbol{\eta}_0/4$ for every $z\in B(x_0,\varepsilon)$, and the conclusion follows from Lemma \ref{estigradhessloin}. If ${\rm dist}(x_0,\partial\Omega)> \boldsymbol{\eta}_0/2$, then $B(x_0,\varepsilon)\subset\Omega$. Going back to estimate \eqref{estiurhoW1p} in the previous proof, we deduce that for $\rho=\varepsilon$ and $p=2/(1-\alpha)$,  
$$\|\nabla u_\varepsilon\|_{L^p(B_1)}\leq C_\alpha\left(1+ \frac{\Lambda}{\lambda_\varepsilon}\right)\,.$$
By the Sobolev embedding Theorem, the former estimate yields $\|u_\varepsilon\|_{C^{0,\alpha}(B(0,1))}\leq C_\alpha (1+\Lambda/\lambda_\varepsilon)$. 
Scaling back, we conclude that 
$$\frac{|u_{\overrightarrow{\boldsymbol{\gamma}}}(x)-u_{\overrightarrow{\boldsymbol{\gamma}}}(y)|}{|x-y|^\alpha}\leq C_\alpha \frac{(1+\Lambda\lambda^{-1}_\varepsilon)}{\varepsilon^\alpha}\,,$$
and the proof is complete. 
\end{proof}

\begin{remark}\label{counterex}
The regularity estimates in Proposition \ref{W1p} and Proposition \ref{Holder} are optimal in the sense that $\nabla u_{\overrightarrow{\boldsymbol{\gamma}}}\not\in L^\infty(\Omega)$ in general. To illustrate this fact, let us consider the simple case where $N=1$, $a_0=0$, $a_1=\tau$ for some $\tau\in\mathbb{S}^1$, and $\Gamma(\overrightarrow{\boldsymbol{\gamma}})=S:=[0,\tau]$ (the straight line segment). From the Euler-Lagrange equation \eqref{eqEL} and the continuity of $u_{\overrightarrow{\boldsymbol{\gamma}}}$, we have 
$$-\Delta u_{\overrightarrow{\boldsymbol{\gamma}}}= \frac{1}{4\varepsilon^2}(1-u_{\overrightarrow{\boldsymbol{\gamma}}}) -\frac{\beta_1}{\lambda_\varepsilon\varepsilon} u_{\overrightarrow{\boldsymbol{\gamma}}}\mathcal{H}^1\res S\quad\text{in $\mathscr{D}^\prime(\Omega)$}\,.$$
By elliptic regularity, $u_{\overrightarrow{\boldsymbol{\gamma}}}$ has essentially the regularity of the solution of the Poisson equation 
$$-\Delta v_*= -u_{\overrightarrow{\boldsymbol{\gamma}}}\mathcal{H}^1\res S\quad\text{in $\mathscr{D}^\prime(\mathbb{R}^2)$}\,,$$
given by the convolution of the measure $-u_{\overrightarrow{\boldsymbol{\gamma}}}\mathcal{H}^1\res S$ with the fundamental solution of the Laplacian, i.e., 
$$v_*(x):=\frac{1}{2\pi} \int_S \log(|x-y|)u_{\overrightarrow{\boldsymbol{\gamma}}}(y)\,d\mathcal{H}^1_y= \frac{1}{2\pi}\int_0^1\log( |x-t\tau|) u_{\overrightarrow{\boldsymbol{\gamma}}}(t\tau)\,dt\,.$$
Differentiating this formula, we obtain 
$$\nabla v_*(x)=\frac{1}{2\pi}\int_0^1\frac{(x-t\tau)}{|x-t\tau|^2}  u_{\overrightarrow{\boldsymbol{\gamma}}}(t\tau)\,dt\quad\text{for every $x\in\mathbb{R}^2\setminus S$}\,.$$
In particular, 
$$\tau\cdot \nabla v_*(s\tau)=\frac{1}{2\pi}\log\big(s/(1-s)\big)u_{\overrightarrow{\boldsymbol{\gamma}}}(s\tau)-\frac{1}{2\pi}\int_0^1\frac{u_{\overrightarrow{\boldsymbol{\gamma}}}(s\tau)-u_{\overrightarrow{\boldsymbol{\gamma}}}(t\tau)}{s-t}\,dt\quad\text{for $s>1$}\,.$$
In view of Proposition \ref{Holder}, we have for every $\alpha\in(0,1)$, 
$$|\nabla v_*(s\tau)|\geq \frac{1}{2\pi}|\log(s-1)| u_{\overrightarrow{\boldsymbol{\gamma}}}(s\tau)-C_\alpha\quad\text{for $s>1$}\,,$$
where $C_\alpha$ is a constant independent of $s$. Therefore $|\nabla v_*|$ cannot be essentially bounded near the point $\tau$ whenever $u_{\overrightarrow{\boldsymbol{\gamma}}}(\tau)\not=0$. Similarly,  $|\nabla v_*|$ is not bounded near $0$ whenever $u_{\overrightarrow{\boldsymbol{\gamma}}}(0)\not=0$. 
These last conditions are ensured for $\beta_1<\!<1$. Indeed, using Proposition \ref{Holder}, one may easily check that $u_{\overrightarrow{\boldsymbol{\gamma}}}\to 1$ uniformly in $\Omega$ as $\beta_1\to 0$ (with $\varepsilon$ fixed). 
\end{remark}

%%%%%%%%%%%%%%%%%%%%%%%%%%%%%%%%%%%%%%%%%%%%%%%%%%%%%%%%%%

\subsection{Existence and regularity of minimizing pairs}

In this subsection, we move on the existence problem for minimizing pairs of the functional $E^\mu_\varepsilon$. Regularity of minimizers will essentially follow from our considerations about the problem with prescribed curves.  In all our statements, we shall use the {\it upper Alhfors threshold}
\begin{equation}\label{Lambdaeps}
\Lambda_\varepsilon:=2+\frac{3}{\delta_\varepsilon}\,.
\end{equation}
Our main results are the following. 

\begin{theorem}\label{existminpairs}
Assume that $\mu$ is of  the form \eqref{discrmeas}. The functional $E^\mu_\varepsilon$ admits at least one minimizing pair $(u_\varepsilon,\overrightarrow{\boldsymbol{\gamma}}_\varepsilon)$ in $(1+H^1_0(\Omega))\times \mathscr{P}(a_0,\mu)$. In addition, for any such minimizer, $\overrightarrow{\boldsymbol{\gamma}}_\varepsilon$ belongs to 
$ \mathscr{P}_{\Lambda_\varepsilon}(a_0,\mu)$, and $u_\varepsilon$ is the potential of $\overrightarrow{\boldsymbol{\gamma}}_\varepsilon$.
\end{theorem}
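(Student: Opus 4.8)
The plan is to obtain the minimizing pair by a two-step optimization, using the identity \eqref{relEF} together with the compactness properties of the curve space. First I would reduce the ambient infimum to one over a compact set. By the coarea/length considerations behind Lemma \ref{bound1}, for any competitor $(u,\overrightarrow{\boldsymbol{\gamma}})$ one can lower $E^\mu_\varepsilon$ by replacing $u$ with its truncation to $[0,1]$, so we may restrict to $0\leq u\leq 1$; in particular the quantity $\frac{\delta_\varepsilon}{\lambda_\varepsilon}\sum_i\beta_i\mathcal{H}^1(\Gamma(\gamma_i))$ is bounded along any minimizing sequence, which forces $\mathcal{H}^1(\Gamma(\gamma_i))\leq C$ for a constant $C$ depending on $\varepsilon$. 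A crucial point is then to show that in fact one may restrict the infimum defining $F^\mu_\varepsilon$ (via \eqref{relEF}) to curves lying in $\mathscr{P}_{\Lambda_\varepsilon}(a_0,\mu)$ with $\Lambda_\varepsilon=2+3/\delta_\varepsilon$: given any $\overrightarrow{\boldsymbol{\gamma}}$ one produces a competitor with controlled Ahlfors constant that does not increase the energy — informally, whenever a ball $B(x,r)$ meets $\Gamma(\gamma_i)$ in a set of $\mathcal{H}^1$-measure $>\Lambda_\varepsilon r$, the portion of the curve inside that ball is ``expensive'' (its contribution to the $\mathbf{D}$-term is at least $\frac{\delta_\varepsilon}{\lambda_\varepsilon}\Lambda_\varepsilon r$), and rerouting it along a chord of length $2r$ through the ball strictly decreases the energy since $\delta_\varepsilon\Lambda_\varepsilon>2$. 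This is the step I expect to be the main obstacle: making the rerouting argument precise (keeping the curve Lipschitz, still connecting $a_0$ to $a_i$, still contained in $\overline\Omega_0$, and genuinely lowering $E^\mu_\varepsilon$ including the $u^2$-weighted part) requires some care, possibly via a covering/iteration scheme.

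Granting that reduction, I would then run the direct method on the reduced problem
\begin{equation*}
\inf\Big\{E^\mu_\varepsilon(u,\overrightarrow{\boldsymbol{\gamma}}): u\in 1+H^1_0(\Omega),\ 0\leq u\leq 1,\ \overrightarrow{\boldsymbol{\gamma}}\in\mathscr{P}_{\Lambda_\varepsilon}(a_0,\mu)\Big\}\,.
\end{equation*}
Take a minimizing sequence $(u_k,\overrightarrow{\boldsymbol{\gamma}}_k)$. The bound on $\varepsilon\int_\Omega|\nabla u_k|^2$ gives weak $H^1$ compactness, so $u_k\rightharpoonup u_\varepsilon$ in $H^1(\Omega)$ with $u_\varepsilon\in 1+H^1_0(\Omega)$ and $0\leq u_\varepsilon\leq 1$. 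For the curves, each $\gamma_{k,i}\in\mathscr{P}(a_0,a_i)$ has $\mathcal{H}^1(\Gamma(\gamma_{k,i}))\leq C$; after reparametrizing by constant speed they become uniformly Lipschitz maps $[0,1]\to\overline\Omega_0$, so by Arzel\`a–Ascoli a subsequence converges uniformly to $\gamma_{\varepsilon,i}\in\mathscr{P}(a_0,a_i)$, and by lower semicontinuity of length under uniform convergence together with the lower semicontinuity of the Ahlfors functional $\mathbf{Al}$ under Hausdorff convergence of the images (which follows from uniform convergence), we get $\overrightarrow{\boldsymbol{\gamma}}_\varepsilon\in\mathscr{P}_{\Lambda_\varepsilon}(a_0,\mu)$.

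The final step is lower semicontinuity of $E^\mu_\varepsilon$ along this pair. The gradient and bulk terms pose no difficulty ($\varepsilon\int|\nabla\cdot|^2$ is weakly l.s.c., and $\frac{1}{4\varepsilon}\int(u-1)^2$ is continuous under strong $L^2$-convergence, which holds by Rellich). The delicate term is $\frac{1}{\lambda_\varepsilon}\sum_i\beta_i\int_{\Gamma(\gamma_{\varepsilon,i})}(\delta_\varepsilon+u_\varepsilon^2)\,d\mathcal{H}^1$: here I would use that on $\mathscr{P}_{\Lambda_\varepsilon}$ the bilinear form $B_\mu[\overrightarrow{\boldsymbol{\gamma}}]$ is uniformly bounded (Lemma \ref{lemfond} with $\Lambda=\Lambda_\varepsilon$) and depends ``continuously enough'' on $\overrightarrow{\boldsymbol{\gamma}}$; combining the Meyers–Ziemer trace estimate \eqref{estiZiem} with the uniform convergence $\gamma_{k,i}\to\gamma_{\varepsilon,i}$ and strong $L^1$-convergence of $u_k^2$ (from strong $L^2$-convergence and $0\leq u_k\leq 1$), one passes to the limit in each trace integral; the $\delta_\varepsilon$-part is handled by l.s.c. of length. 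This yields $E^\mu_\varepsilon(u_\varepsilon,\overrightarrow{\boldsymbol{\gamma}}_\varepsilon)\leq\liminf_k E^\mu_\varepsilon(u_k,\overrightarrow{\boldsymbol{\gamma}}_k)=\inf$, so $(u_\varepsilon,\overrightarrow{\boldsymbol{\gamma}}_\varepsilon)$ is a minimizing pair. Finally, for \emph{any} minimizing pair: the reduction argument of the first paragraph shows $\overrightarrow{\boldsymbol{\gamma}}_\varepsilon\in\mathscr{P}_{\Lambda_\varepsilon}(a_0,\mu)$ (else one could strictly decrease the energy by rerouting), and then, holding $\overrightarrow{\boldsymbol{\gamma}}_\varepsilon$ fixed, $u_\varepsilon$ minimizes $E^\mu_\varepsilon(\cdot,\overrightarrow{\boldsymbol{\gamma}}_\varepsilon)$ over $1+H^1_0(\Omega)$, so by the uniqueness in Theorem \ref{existuniqthmgammafix} it must equal the potential $u_{\overrightarrow{\boldsymbol{\gamma}}_\varepsilon}$.
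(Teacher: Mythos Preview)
Your overall strategy matches the paper's: reduce to $\mathscr{P}_{\Lambda_\varepsilon}(a_0,\mu)$ via a rerouting argument (this is exactly Lemma~\ref{replace}), run the direct method, and deduce the ``in addition'' part a posteriori from minimality and Theorem~\ref{existuniqthmgammafix}. The Step~2 part of your argument is fine and coincides with the paper's.

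The genuine gap is in your lower semicontinuity step for the trace term. You propose to pass to the limit in $\sum_i\beta_i\int_{\Gamma(\gamma_{k,i})}u_k^2\,d\mathcal{H}^1$ using the Meyers--Ziemer estimate together with strong $L^1$-convergence of $u_k^2$. But \eqref{estiZiem} bounds the trace by $\int_{\mathbb{R}^2}|\nabla w|\,dx$, not by an $L^1$-norm of $w$; and $\nabla(u_k^2)=2u_k\nabla u_k$ converges only \emph{weakly} in $L^2$ (hence in $L^1$), not strongly. So the Meyers--Ziemer bound does not force $\int_{\Gamma_k}(u_k^2-u_\varepsilon^2)\,d\mathcal{H}^1\to 0$. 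For a \emph{fixed} curve in $\mathscr{P}_{\Lambda_\varepsilon}$ the quadratic form $B_\mu[\overrightarrow{\boldsymbol{\gamma}}]$ is weakly l.s.c.\ on $H^1$ by Lemma~\ref{lemfond}, but here the curves themselves move with $k$, and no uniform trace-compactness is available on such irregular $1$-sets for merely $H^1$-bounded sequences.

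The paper closes this gap by a different device: along the minimizing sequence it first replaces $u_n$ by the potential $u_{\overrightarrow{\boldsymbol{\gamma}}_n}$ (which only decreases the energy), and then invokes the a priori H\"older estimate of Proposition~\ref{Holder} (uniform in $n$ since $\overrightarrow{\boldsymbol{\gamma}}_n\in\mathscr{P}_{\Lambda_\varepsilon}$) to upgrade weak $H^1$-convergence to \emph{uniform} convergence of $u_{\overrightarrow{\boldsymbol{\gamma}}_n}$. With uniform convergence in hand, one has $u_\varepsilon^2\leq u_{\overrightarrow{\boldsymbol{\gamma}}_n}^2+\kappa$ for large $n$, reducing the joint l.s.c.\ to that of $\gamma\mapsto\int_0^1(\delta_\varepsilon-\kappa+u_\varepsilon^2(\gamma))|\gamma'|\,dt$ for a fixed continuous integrand, which follows from \cite{MaSb}; the area formula (using that the approximating curves were taken injective via Lemma~\ref{geodlem}) converts this back to an $\mathcal{H}^1$-integral over the image. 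In short, the missing idea in your sketch is precisely this ``pass to the potential to gain $C^{0,\alpha}$-compactness'' step; without it the trace term cannot be controlled along a minimizing sequence with both $u$ and $\overrightarrow{\boldsymbol{\gamma}}$ varying.
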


A byproduct of this theorem is the following existence and regularity result for our original functional $F^\mu_\varepsilon$ in case of a measure $\mu$ with finite support.

\begin{corollary}\label{thmminF}
Assume that $\mu$ is of  the form \eqref{discrmeas}. The functional $F^\mu_\varepsilon$ admits at least one minimizer  $u_\varepsilon$ in $1+H^1_0(\Omega)\cap L^\infty(\Omega)$. In addition, any such minimizer belongs to $W^{1,p}(\Omega)$ for every $p<\infty$ (in particular, $u_\varepsilon\in C^{0,\alpha}(\Omega)$ for every $\alpha\in(0,1)$). Moreover, there exists   $\overrightarrow{\boldsymbol{\gamma}}_\varepsilon\in \mathscr{P}(a_0,\mu)$ such that $(u_\varepsilon,\overrightarrow{\boldsymbol{\gamma}}_\varepsilon)$ is a minimizing pair  of $E^\mu_\varepsilon$ in $(1+H^1_0(\Omega))\times \mathscr{P}(a_0,\mu)$.
\end{corollary}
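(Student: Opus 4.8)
The plan is to derive the corollary from Theorem \ref{existminpairs} together with the relation \eqref{relEF} between $F^\mu_\varepsilon$ and $E^\mu_\varepsilon$. First I would observe that for every $u\in 1+H^1_0(\Omega)\cap L^\infty(\Omega)$ and every $\overrightarrow{\boldsymbol{\gamma}}\in\mathscr{P}(a_0,\mu)$ one has $F^\mu_\varepsilon(u)\leq E^\mu_\varepsilon(u,\overrightarrow{\boldsymbol{\gamma}})$ by \eqref{relEF}, and hence
\[
\inf_{1+H^1_0(\Omega)\cap L^\infty(\Omega)}F^\mu_\varepsilon\ \leq\ \inf_{(1+H^1_0(\Omega))\times\mathscr{P}(a_0,\mu)}E^\mu_\varepsilon\,.
\]
Conversely, taking the infimum over $\overrightarrow{\boldsymbol{\gamma}}$ in \eqref{relEF} shows that the two infima coincide. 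Now let $(u_\varepsilon,\overrightarrow{\boldsymbol{\gamma}}_\varepsilon)$ be a minimizing pair of $E^\mu_\varepsilon$, whose existence is granted by Theorem \ref{existminpairs}. Then $u_\varepsilon\in 1+H^1_0(\Omega)$, and since by that theorem $u_\varepsilon$ is the potential of $\overrightarrow{\boldsymbol{\gamma}}_\varepsilon$, Lemma \ref{bound1} gives $0\leq u_\varepsilon\leq 1$, so in particular $u_\varepsilon\in L^\infty(\Omega)$. Using \eqref{relEF} once more,
\[
F^\mu_\varepsilon(u_\varepsilon)\ \leq\ E^\mu_\varepsilon(u_\varepsilon,\overrightarrow{\boldsymbol{\gamma}}_\varepsilon)\ =\ \min_{(1+H^1_0(\Omega))\times\mathscr{P}(a_0,\mu)}E^\mu_\varepsilon\ =\ \inf_{1+H^1_0(\Omega)\cap L^\infty(\Omega)}F^\mu_\varepsilon\,,
\]
so $u_\varepsilon$ minimizes $F^\mu_\varepsilon$. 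This proves existence and simultaneously exhibits the desired $\overrightarrow{\boldsymbol{\gamma}}_\varepsilon$ making $(u_\varepsilon,\overrightarrow{\boldsymbol{\gamma}}_\varepsilon)$ a minimizing pair.

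For the regularity part, I would argue that \emph{any} minimizer $u_\varepsilon$ of $F^\mu_\varepsilon$ arises this way. Indeed, by definition of $F^\mu_\varepsilon$ and the lower semicontinuity / compactness machinery used to prove Theorem \ref{existminpairs} (the curves live in $[C^0([0,1];\overline\Omega_0)]^N$, which is compact by Arzel\`a--Ascoli once an Ahlfors bound is available), the infimum $\inf_{\overrightarrow{\boldsymbol{\gamma}}\in\mathscr{P}(a_0,\mu)}E^\mu_\varepsilon(u_\varepsilon,\overrightarrow{\boldsymbol{\gamma}})$ defining $F^\mu_\varepsilon(u_\varepsilon)$ is attained at some $\overrightarrow{\boldsymbol{\gamma}}_\varepsilon\in\mathscr{P}(a_0,\mu)$; then $(u_\varepsilon,\overrightarrow{\boldsymbol{\gamma}}_\varepsilon)$ satisfies $E^\mu_\varepsilon(u_\varepsilon,\overrightarrow{\boldsymbol{\gamma}}_\varepsilon)=F^\mu_\varepsilon(u_\varepsilon)=\inf E^\mu_\varepsilon$, so it is a minimizing pair of $E^\mu_\varepsilon$, whence by Theorem \ref{existminpairs} we get $\overrightarrow{\boldsymbol{\gamma}}_\varepsilon\in\mathscr{P}_{\Lambda_\varepsilon}(a_0,\mu)$ and $u_\varepsilon=u_{\overrightarrow{\boldsymbol{\gamma}}_\varepsilon}$. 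The regularity claims $u_\varepsilon\in W^{1,p}(\Omega)$ for all $p<\infty$ and $u_\varepsilon\in C^{0,\alpha}(\Omega)$ for all $\alpha\in(0,1)$ then follow at once from Proposition \ref{W1p} and Proposition \ref{Holder} applied to the potential $u_{\overrightarrow{\boldsymbol{\gamma}}_\varepsilon}$ with $\Lambda=\Lambda_\varepsilon$.

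The only genuinely delicate point is verifying that $F^\mu_\varepsilon(u_\varepsilon)$ is actually a minimum of $\overrightarrow{\boldsymbol{\gamma}}\mapsto E^\mu_\varepsilon(u_\varepsilon,\overrightarrow{\boldsymbol{\gamma}})$ rather than merely an infimum, i.e.\ that the infimum in \eqref{relEF} is attained for the fixed function $u_\varepsilon$; equivalently, that a minimizing sequence of curves may be taken with a uniform Ahlfors bound and passes to a limit with lower semicontinuity of the line integral $\sum_i\beta_i\int_{\Gamma(\gamma_i)}(\delta_\varepsilon+u_\varepsilon^2)\,d\mathcal{H}^1$. For the Ahlfors bound one replaces, along a minimizing sequence, each $\gamma_i$ by a (reparametrised, geodesic-type) curve of no greater energy: since $\delta_\varepsilon+u_\varepsilon^2\geq\delta_\varepsilon>0$, the energy controls the length $\mathcal{H}^1(\Gamma(\gamma_i))\leq \lambda_\varepsilon\delta_\varepsilon^{-1}\beta_i^{-1}E^\mu_\varepsilon(u_\varepsilon,\overrightarrow{\boldsymbol{\gamma}})$, and a minimal-length-in-a-metric argument yields curves whose Ahlfors number is bounded by $\Lambda_\varepsilon=2+3/\delta_\varepsilon$; this is precisely the content underlying Theorem \ref{existminpairs} and its proof. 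Granting that, Golab's semicontinuity theorem (or the direct method in the Hausdorff topology on connected compact sets) closes the argument. I expect this attainment/Ahlfors step to be the main obstacle; everything else is a formal manipulation of \eqref{relEF} and a citation of the already-established Propositions \ref{W1p} and \ref{Holder} and Lemma \ref{bound1}.
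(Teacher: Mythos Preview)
Your existence argument is fine and matches the paper. The gap is exactly where you flag it: attainment of the infimum $\inf_{\overrightarrow{\boldsymbol{\gamma}}}E^\mu_\varepsilon(u_\varepsilon,\overrightarrow{\boldsymbol{\gamma}})$ for an \emph{arbitrary} minimizer $u_\varepsilon$ which, at this stage, is only known to lie in $H^1(\Omega)\cap L^\infty(\Omega)$. Every tool you invoke to close this step presupposes continuity of the weight $\delta_\varepsilon+u_\varepsilon^2$. Lemma~\ref{geodlem} (the ``minimal-length-in-a-metric'' argument) is stated and proved for $u\in C^1(\overline\Omega)$, because the underlying geodesic existence result requires a continuous conformal factor. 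Lemma~\ref{replace} gives a single improvement step but does not, by itself, produce Ahlfors-regular curves along an entire minimizing sequence. And Golab's theorem handles $\mathcal{H}^1$, not $\int_{\Gamma}w\,d\mathcal{H}^1$ for a weight $w$ that is merely a precise representative defined $\mathcal{H}^1$-a.e.; lower semicontinuity of the weighted length under uniform/Hausdorff convergence of curves fails in general without continuity of $w$. So the argument is circular: you need the H\"older regularity of $u_\varepsilon$ to get the curve, and the curve to get the regularity.

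The paper breaks this circularity differently. After first checking $0\leq u_\varepsilon\leq 1$ by truncation, it picks almost-optimal curves $\overrightarrow{\boldsymbol{\gamma}}_n$ for $u_\varepsilon$, then approximates $u_\varepsilon$ in $H^1$ by smooth $u_n$ (as in Lemma~\ref{goodpairs}), applies Lemma~\ref{geodlem} to each $u_n$ to obtain $\overrightarrow{\boldsymbol{\gamma}}_{\sharp,n}\in\mathscr{P}_{\Lambda_\varepsilon}(a_0,\mu)$, and passes to the potentials $u_{\overrightarrow{\boldsymbol{\gamma}}_{\sharp,n}}$. Using the Euler--Lagrange equation~\eqref{eqEL} one shows $\|u_n-u_{\overrightarrow{\boldsymbol{\gamma}}_{\sharp,n}}\|_{H^1}\to 0$, hence $u_{\overrightarrow{\boldsymbol{\gamma}}_{\sharp,n}}\to u_\varepsilon$ in $H^1$; since the potentials are uniformly bounded in $W^{1,p}$ by Proposition~\ref{W1p} (with $\Lambda=\Lambda_\varepsilon$), so is $u_\varepsilon$. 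Only \emph{after} establishing this, and hence the uniform convergence $u_{\overrightarrow{\boldsymbol{\gamma}}_{\sharp,n}}\to u_\varepsilon$, does the paper pass to a limit in the curves to produce the attaining $\overrightarrow{\boldsymbol{\gamma}}_\varepsilon$. The smoothing detour is not cosmetic; it is what makes the compactness/lower-semicontinuity step legitimate.
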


In the same way, we have an analogous result concerning the auxiliary functional $G^\mu_\varepsilon$ defined in \eqref{defGmu}. 

\begin{corollary}\label{thmminG}
Assume that $\mu$ is of  the form \eqref{discrmeas}. The functional $G^\mu_\varepsilon$ admits at least one minimizer $\overrightarrow{\boldsymbol{\gamma}}_\varepsilon=(\gamma_1^\varepsilon,\ldots,\gamma_N^\varepsilon)\in \mathscr{P}(a_0,\mu)$. In addition, any such minimizer belongs to $ \mathscr{P}_{\Lambda_\varepsilon}(a_0,\mu)$,  and $(u_{\overrightarrow{\boldsymbol{\gamma}}_\varepsilon}, \overrightarrow{\boldsymbol{\gamma}}_\varepsilon)$ is a minimizing pair of  $E^\mu_\varepsilon$ in $(1+H^1_0(\Omega))\times \mathscr{P}(a_0,\mu)$.
\end{corollary}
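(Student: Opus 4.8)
The plan is to derive everything from Theorem \ref{existminpairs}, which already furnishes a minimizing pair $(u_\varepsilon,\overrightarrow{\boldsymbol{\gamma}}_\varepsilon)$ for $E^\mu_\varepsilon$ on $(1+H^1_0(\Omega))\times\mathscr{P}(a_0,\mu)$ with $\overrightarrow{\boldsymbol{\gamma}}_\varepsilon\in\mathscr{P}_{\Lambda_\varepsilon}(a_0,\mu)$ and $u_\varepsilon=u_{\overrightarrow{\boldsymbol{\gamma}}_\varepsilon}$. First I would observe the elementary identity
$$
\min_{(u,\overrightarrow{\boldsymbol{\gamma}})} E^\mu_\varepsilon(u,\overrightarrow{\boldsymbol{\gamma}})
=\inf_{\overrightarrow{\boldsymbol{\gamma}}\in\mathscr{P}(a_0,\mu)}\Big(\inf_{u\in 1+H^1_0(\Omega)}E^\mu_\varepsilon(u,\overrightarrow{\boldsymbol{\gamma}})\Big)
=\inf_{\overrightarrow{\boldsymbol{\gamma}}\in\mathscr{P}(a_0,\mu)}G^\mu_\varepsilon(\overrightarrow{\boldsymbol{\gamma}})\,,
$$
which is nothing but iterating the infimum in the two variables and recalling the definition \eqref{defGmu} of $G^\mu_\varepsilon$. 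Consequently $\overrightarrow{\boldsymbol{\gamma}}_\varepsilon$ minimizes $G^\mu_\varepsilon$: indeed, for any competitor $\overrightarrow{\boldsymbol{\eta}}\in\mathscr{P}(a_0,\mu)$ one has
$G^\mu_\varepsilon(\overrightarrow{\boldsymbol{\gamma}}_\varepsilon)\le E^\mu_\varepsilon(u_{\overrightarrow{\boldsymbol{\gamma}}_\varepsilon},\overrightarrow{\boldsymbol{\gamma}}_\varepsilon)=\min E^\mu_\varepsilon\le G^\mu_\varepsilon(\overrightarrow{\boldsymbol{\eta}})$,
where the first inequality uses the definition of $G^\mu_\varepsilon$ as an infimum and the equality uses that $(u_{\overrightarrow{\boldsymbol{\gamma}}_\varepsilon},\overrightarrow{\boldsymbol{\gamma}}_\varepsilon)$ is optimal; so $\min G^\mu_\varepsilon$ is attained at $\overrightarrow{\boldsymbol{\gamma}}_\varepsilon$. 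This gives existence.

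For the ``In addition'' part, let $\overrightarrow{\boldsymbol{\gamma}}_\varepsilon$ be \emph{any} minimizer of $G^\mu_\varepsilon$. The point is that the pair $(u_{\overrightarrow{\boldsymbol{\gamma}}_\varepsilon},\overrightarrow{\boldsymbol{\gamma}}_\varepsilon)$ is then a minimizing pair of $E^\mu_\varepsilon$: for any $(u,\overrightarrow{\boldsymbol{\eta}})$ in the product space,
$$
E^\mu_\varepsilon(u_{\overrightarrow{\boldsymbol{\gamma}}_\varepsilon},\overrightarrow{\boldsymbol{\gamma}}_\varepsilon)
=G^\mu_\varepsilon(\overrightarrow{\boldsymbol{\gamma}}_\varepsilon)
\le G^\mu_\varepsilon(\overrightarrow{\boldsymbol{\eta}})
\le E^\mu_\varepsilon(u,\overrightarrow{\boldsymbol{\eta}})\,,
$$
the first equality because $u_{\overrightarrow{\boldsymbol{\gamma}}_\varepsilon}$ achieves the infimum defining $G^\mu_\varepsilon(\overrightarrow{\boldsymbol{\gamma}}_\varepsilon)$ (one should note that this infimum is indeed a minimum: if $\overrightarrow{\boldsymbol{\gamma}}_\varepsilon\in\mathscr{P}_\Lambda(a_0,\mu)$ for some $\Lambda$, Theorem \ref{existuniqthmgammafix} applies directly; in general one argues that $G^\mu_\varepsilon(\overrightarrow{\boldsymbol{\gamma}}_\varepsilon)<\infty$ forces each $\mathcal{H}^1(\Gamma(\gamma_i^\varepsilon))<\infty$, and a direct method in $u$ using only lower semicontinuity of $u\mapsto\int_{\Gamma(\gamma_i)}u^2\,d\mathcal{H}^1$ along minimizing sequences still produces a minimizer). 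Once $(u_{\overrightarrow{\boldsymbol{\gamma}}_\varepsilon},\overrightarrow{\boldsymbol{\gamma}}_\varepsilon)$ is known to be a minimizing pair of $E^\mu_\varepsilon$, Theorem \ref{existminpairs} applies verbatim to this pair and yields $\overrightarrow{\boldsymbol{\gamma}}_\varepsilon\in\mathscr{P}_{\Lambda_\varepsilon}(a_0,\mu)$ and that $u_{\overrightarrow{\boldsymbol{\gamma}}_\varepsilon}$ is the potential of $\overrightarrow{\boldsymbol{\gamma}}_\varepsilon$ — which is exactly what was asserted.

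The only genuinely delicate point I anticipate is justifying that the infimum defining $G^\mu_\varepsilon(\overrightarrow{\boldsymbol{\gamma}})$ is attained \emph{a priori}, i.e. before knowing that $\overrightarrow{\boldsymbol{\gamma}}$ satisfies an Ahlfors bound — since Theorem \ref{existuniqthmgammafix} was stated only for $\overrightarrow{\boldsymbol{\gamma}}\in\mathscr{P}_\Lambda(a_0,\mu)$. The cleanest way around this is to avoid the issue entirely: define $\overrightarrow{\boldsymbol{\gamma}}_\varepsilon$ in the existence part via Theorem \ref{existminpairs} (so it automatically lies in $\mathscr{P}_{\Lambda_\varepsilon}(a_0,\mu)$ and Theorem \ref{existuniqthmgammafix} applies), and for the ``any minimizer'' part, first establish $\mathcal{H}^1(\Gamma(\gamma_i^\varepsilon))<\infty$ from finiteness of $G^\mu_\varepsilon(\overrightarrow{\boldsymbol{\gamma}}_\varepsilon)$ and the $\delta_\varepsilon/\lambda_\varepsilon$ term in $E^\mu_\varepsilon$, then run the direct method for the convex quadratic functional $u\mapsto E^\mu_\varepsilon(u,\overrightarrow{\boldsymbol{\gamma}}_\varepsilon)$ on $1+H^1_0(\Omega)$; coercivity from the $\varepsilon|\nabla u|^2+\tfrac1{4\varepsilon}(u-1)^2$ part plus weak lower semicontinuity of the nonnegative curve term (which requires a Fatou-type argument along a minimizing sequence converging strongly in $L^2_{\mathrm{loc}}$, using that $\mathcal{H}^1\res\Gamma(\gamma_i^\varepsilon)$ is a finite measure) delivers a minimizer, hence the infimum is a minimum. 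After that everything reduces mechanically to Theorem \ref{existminpairs} as above.
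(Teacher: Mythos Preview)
Your existence argument is correct and matches the paper's: since $\inf G^\mu_\varepsilon=\min E^\mu_\varepsilon$ and Theorem~\ref{existminpairs} furnishes a minimizing pair, its $\overrightarrow{\boldsymbol{\gamma}}$-component minimizes $G^\mu_\varepsilon$.

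For the ``any minimizer'' part your approach diverges from the paper's, and the lower semicontinuity step you sketch does not go through. You propose to run the direct method on $u\mapsto E^\mu_\varepsilon(u,\overrightarrow{\boldsymbol{\gamma}}_\varepsilon)$ and justify weak lower semicontinuity of the curve term by Fatou along a minimizing sequence converging strongly in $L^2_{\rm loc}$. But strong $L^2$ convergence only yields pointwise convergence $\mathcal{L}^2$-a.e.\ (along a subsequence), which says nothing on the $\mathcal{L}^2$-null set $\Gamma(\gamma_i^\varepsilon)$; Fatou cannot be applied there. Without an Ahlfors bound, Lemma~\ref{lemfond} is unavailable and no trace estimate controls $\int_{\Gamma(\gamma_i^\varepsilon)} u^2\,d\mathcal{H}^1$ by the $H^1$-norm. (Your line could be rescued by a different argument: the functional is convex, and it is strongly $H^1$-lower semicontinuous because strong $H^1$ convergence gives quasi-everywhere convergence, hence $\mathcal{H}^1$-a.e.\ convergence on $\Gamma(\gamma_i^\varepsilon)$ as in the proof of Lemma~\ref{goodpairs}, and then Fatou does apply; convexity plus strong lsc yields weak lsc. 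But this is not the route you described.)

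The paper sidesteps the issue entirely. Given an arbitrary minimizer $\overrightarrow{\boldsymbol{\gamma}}_\varepsilon$ of $G^\mu_\varepsilon$, it first proves $\overrightarrow{\boldsymbol{\gamma}}_\varepsilon\in\mathscr{P}_{2\Lambda_\varepsilon}(a_0,\mu)$ by contradiction: if the Ahlfors bound failed at some ball $B(x_0,r)$ for $\gamma_{i_0}^\varepsilon$, choose $\widetilde u\in 1+H^1_0(\Omega)$ with $E^\mu_\varepsilon(\widetilde u,\overrightarrow{\boldsymbol{\gamma}}_\varepsilon)\le G^\mu_\varepsilon(\overrightarrow{\boldsymbol{\gamma}}_\varepsilon)+\tfrac{\beta_{i_0}r}{2\lambda_\varepsilon}$ (merely an \emph{approximate} minimizer---no attainment needed), truncate so that $0\le\widetilde u\le 1$, and apply Lemma~\ref{replace} to produce $\overrightarrow{\boldsymbol{\gamma}}_\sharp$ with $G^\mu_\varepsilon(\overrightarrow{\boldsymbol{\gamma}}_\sharp)\le E^\mu_\varepsilon(\widetilde u,\overrightarrow{\boldsymbol{\gamma}}_\sharp)\le G^\mu_\varepsilon(\overrightarrow{\boldsymbol{\gamma}}_\varepsilon)-\tfrac{\beta_{i_0}r}{2\lambda_\varepsilon}$, contradicting minimality. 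Once $\overrightarrow{\boldsymbol{\gamma}}_\varepsilon\in\mathscr{P}_{2\Lambda_\varepsilon}(a_0,\mu)$, Theorem~\ref{existuniqthmgammafix} applies and the potential $u_{\overrightarrow{\boldsymbol{\gamma}}_\varepsilon}$ exists with $G^\mu_\varepsilon(\overrightarrow{\boldsymbol{\gamma}}_\varepsilon)=E^\mu_\varepsilon(u_{\overrightarrow{\boldsymbol{\gamma}}_\varepsilon},\overrightarrow{\boldsymbol{\gamma}}_\varepsilon)$; your chain of inequalities then goes through cleanly, and Theorem~\ref{existminpairs} upgrades the bound to $\Lambda_\varepsilon$.
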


\begin{remark}\label{regC1alphaCurve0} Concerning the regularity of $\Gamma( \overrightarrow{\boldsymbol{\gamma}}_\varepsilon)$, we can invoke the results of \cite{DPLM} and the H\"older continuity of $u_\varepsilon$ to show that each $\Gamma(\gamma_i^\varepsilon)$ is in fact a $C^{1,\alpha}$ curve for every $\alpha\in(0,1/2)$ in a neighborhood of every point in $\Omega\setminus\{a_0,\ldots,a_N\}$ (assuming eventually that $\partial\Omega_0$ is smooth). One could use this further information to get improved (partial) regularity on $u_\varepsilon$, but we do not pursue this issue here. We also believe that the curves admit a tangent line at the $a_i$'s, and that the  $C^{1,\alpha}$ regularity holds true up to each $a_i$. This latter fact does not derive directly from the statements of  \cite{DPLM}, but can certainly be proved using the material developed there.  
\end{remark}

\begin{remark}\label{regC1alphaCurve} 
In all the statements above, we believe the regularity of $u_\varepsilon$ to be optimal in the sense that $u_\varepsilon$ is {\it not} Lipschitz continuous. More precisely, Lipschitz continuity should fail near the $a_i$'s. In view of Remarks \ref{counterex} \& \ref{regC1alphaCurve0}, the question boils down to determine whether or not $u_\varepsilon(a_i)$ vanishes or not. Up to some trivial situations, we believe that  $u_\varepsilon(a_i)\not=0$, and that $|\nabla u_\veps|$ actually behaves like $|\log(|x-a_i|)|$ in the neighborhood of  $a_i$ (as in Remark \ref{regC1alphaCurve0}).  
\end{remark}

Theorem \ref{existminpairs}, Corollary \ref{thmminF}, and Corollary \ref{thmminG} follow from the regularity estimates obtained in the previous subsection together with a set of lemmas of independent interest. Our first fundamental step is a replacement procedure allowing to show the upper Alhfors regularity of the curves. 

 \begin{lemma}\label{replace}
Let $u\in 1+H^1_0(\Omega)\cap L^\infty(\Omega)$ be such that $\|u\|_{L^\infty(\Omega)}\leq 1$, and let $\overrightarrow{\boldsymbol{\gamma}}=(\gamma_1,\ldots,\gamma_N)\in \mathscr{P}(a_0,\mu)$. If for some $i_0\in\{1,\ldots,N\}$,  
 $x\in \Gamma(\gamma_{i_0})$, and  $r>0$,  
\begin{equation}\label{contrAlf}
\mathcal{H}^1\big(\Gamma(\gamma_{i_0})\cap B(x,r)\big)\geq \Lambda_\varepsilon r \,,
\end{equation}
where $\Lambda_\veps$ is defined in  \eqref{Lambdaeps}, then there exists $\overrightarrow{\boldsymbol{\gamma}}_\sharp=(\gamma_1,\ldots,\gamma_{i_0-1},\gamma^\sharp_{i_0},\gamma_{i_0+1},\ldots,\gamma_N)\in \mathscr{P}(a_0,\mu)$ such that 
$$E^\mu_\eps(u,\overrightarrow{\boldsymbol{\gamma}}_\sharp)\leq E^\mu_\eps(u,\overrightarrow{\boldsymbol{\gamma}}) -\frac{\beta_{i_0} r}{\lambda_\eps}\,. $$
\end{lemma}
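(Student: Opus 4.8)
The plan is to exploit the Ahlfors upper-bound violation \eqref{contrAlf} to ``cut out'' the piece of the curve $\gamma_{i_0}$ lying inside $B(x,r)$ and replace it by a shorter detour running along (part of) the boundary circle $\partial B(x,r)$, which has controlled length. First I would reparametrize and record the structure of $\gamma_{i_0}$ relative to the ball: let $t_{\rm in}:=\inf\{t:\gamma_{i_0}(t)\in \overline B(x,r)\}$ and $t_{\rm out}:=\sup\{t:\gamma_{i_0}(t)\in \overline B(x,r)\}$, so that $p:=\gamma_{i_0}(t_{\rm in})$ and $q:=\gamma_{i_0}(t_{\rm out})$ lie on $\partial B(x,r)$ (unless an endpoint $a_0$ or $a_{i_0}$ is itself inside the ball, a case I would handle separately by taking $p$ or $q$ to be that endpoint and using a radial segment). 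I then define $\gamma^\sharp_{i_0}$ to coincide with $\gamma_{i_0}$ on $[0,t_{\rm in}]$ and on $[t_{\rm out},1]$, and on the middle interval to be a shorter of the two arcs of $\partial B(x,r)$ joining $p$ to $q$ (composed with radial segments if an endpoint was interior). Since $\overline\Omega_0$ is convex and $x\in\Gamma(\gamma_{i_0})\subset\overline\Omega_0$, while $r$ is no larger than $\mathrm{diam}(\overline\Omega_0)$, the replacement curve stays in $\overline\Omega_0$; after a Lipschitz reparametrization onto $[0,1]$ it lies in $\mathscr{P}(a_0,a_{i_0})$, so $\overrightarrow{\boldsymbol{\gamma}}_\sharp\in\mathscr{P}(a_0,\mu)$.

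Next I would compare the energies. The gradient term $\eps\int_\Omega|\nabla u|^2$ and the fidelity term $\frac1{4\eps}\int_\Omega(u-1)^2$ are untouched because $u$ is fixed, so only the $i_0$-th curve integral in $E^\mu_\eps$ changes, and it changes only over the portion of the curves inside $\overline B(x,r)$. Using $0\le u\le 1$ (hence $0\le\delta_\eps+u^2\le 1+\delta_\eps\le 2$, with $\delta_\eps<1$) on the new arc, and $\delta_\eps+u^2\ge\delta_\eps$ everywhere together with the hypothesis \eqref{contrAlf} on the old piece, I bound
\begin{align*}
\int_{\Gamma(\gamma^\sharp_{i_0})\cap B(x,r)}(\delta_\eps+u^2)\,d\mathcal H^1
&\le (1+\delta_\eps)\,\mathcal H^1\big(\Gamma(\gamma^\sharp_{i_0})\cap \overline B(x,r)\big)\le (1+\delta_\eps)\,\pi r\le 3r,\\
\int_{\Gamma(\gamma_{i_0})\cap B(x,r)}(\delta_\eps+u^2)\,d\mathcal H^1
&\ge \delta_\eps\,\mathcal H^1\big(\Gamma(\gamma_{i_0})\cap B(x,r)\big)\ge \delta_\eps\Lambda_\eps r=(2\delta_\eps+3)r.
\end{align*}
Here I used that the new arc-plus-radial-segments inside $\overline B(x,r)$ has length at most half the circumference plus two radii, i.e.\ at most $\pi r + 2r$; in the interior-endpoint case one replaces $3r$ by $(\pi+2+\delta_\eps(\pi+2))r$ and correspondingly enlarges $\Lambda_\eps$ — so in fact the clean constant $3$ in \eqref{Lambdaeps} is tuned precisely to make the ``no interior endpoint'' estimate work, and the interior-endpoint case, being more favorable energetically only if handled with a bigger constant, must be argued slightly differently (see the remark below). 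The difference of the two curve integrals is therefore at least $(2\delta_\eps+3)r-3r=2\delta_\eps r>0$; multiplying by $\beta_{i_0}/\lambda_\eps$ gives a strict energy drop of at least $2\delta_\eps\beta_{i_0}r/\lambda_\eps$. To obtain the cleaner stated bound $\beta_{i_0}r/\lambda_\eps$ I would instead use the crude estimate $\mathcal H^1(\text{new arc}\cap\overline B(x,r))\le \pi r$ against $\delta_\eps\Lambda_\eps r = (2\delta_\eps+3)r\ge 3r$ on the old piece together with $\delta_\eps+u^2\le 2$: $2\pi r$ is \emph{not} below $3r$, so more care is needed — the honest computation is $\int_{\rm new}-\int_{\rm old}\le 2\cdot\pi r - \delta_\eps\cdot(2+3/\delta_\eps)r = 2\pi r - 2\delta_\eps r - 3r$, which is \emph{negative only if} $2\pi<3$, false. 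Hence the right normalization uses the factor $(1+\delta_\eps)<2$ on the new arc and the ``half-circumference'' length bound $\pi r$, giving $(1+\delta_\eps)\pi r - (2\delta_\eps+3)r$; since $\pi<3$ and $\delta_\eps(\pi-2)<1$ this is $<-\,$ something like $(3-\pi)r$, comfortably below $-\beta_{i_0}r/\lambda_\eps\cdot(\lambda_\eps/\beta_{i_0})$...

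To keep the writeup rigorous I would therefore fix the constants as follows: choose the \emph{shorter} arc so its length is $\le \pi r$, use $\delta_\eps+u^2\le 1+\delta_\eps$ on it, use $\delta_\eps+u^2\ge\delta_\eps$ and \eqref{contrAlf} with $\Lambda_\eps=2+3/\delta_\eps$ on the removed piece, and conclude
\[
\Delta:=\!\!\int_{\Gamma(\gamma_{i_0})\cap \overline B(x,r)}\!\!\!(\delta_\eps+u^2)\,d\mathcal H^1 - \!\!\int_{\Gamma(\gamma^\sharp_{i_0})\cap \overline B(x,r)}\!\!\!(\delta_\eps+u^2)\,d\mathcal H^1 \ \ge\ (2\delta_\eps+3)r-(1+\delta_\eps)\pi r\ \ge\ r,
\]
the last inequality because $(2-\pi)\delta_\eps+(3-\pi)\ge 3-\pi>0$ and indeed $3-\pi>1-$ wait, $3-\pi\approx0.14<1$; so I would rather state the bound with the true constant $c_0:=3-\pi+(2-\pi)\delta_\eps$ in place of $1$ and note $\Delta\ge c_0 r$, then since $c_0$ depends only on $\delta_\eps$ one cannot literally get the coefficient $1$ — so the correct clean statement the authors surely intend is obtained by taking $\Lambda_\eps$ slightly larger or by the sharper observation that on the removed piece one may also use $u\le 1$ to get the integrand $\le 2$ is irrelevant; the gain is genuinely $\ge \delta_\eps(\Lambda_\eps-\pi)r-(3-\delta_\eps\pi)\ldots$. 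I would resolve this in the final text by simply proving $E^\mu_\eps(u,\overrightarrow{\boldsymbol{\gamma}}_\sharp)\le E^\mu_\eps(u,\overrightarrow{\boldsymbol{\gamma}})-\tfrac{\beta_{i_0}r}{\lambda_\eps}$ directly from $\Delta\ge r$, which holds once one replaces the half-circumference bound $\pi r$ by the trivial bound $\mathcal H^1(\partial B(x,r))=2\pi r$ \emph{only if} $\Lambda_\eps$ is taken as, say, $2+7/\delta_\eps$; given that \eqref{Lambdaeps} is as stated, the intended argument must use the shorter-arc length $\le \pi r$ and the integrand bound $\le 1$ (valid since $u^2\le 1$ and then absorbing the extra $\delta_\eps$ into the slack), yielding $\Delta\ge (2\delta_\eps+3-\pi)r\ge r$ because $2\delta_\eps+3-\pi>1\iff 2\delta_\eps>\pi-2\approx1.14$, \emph{false} for small $\delta_\eps$. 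The upshot: the main obstacle is precisely pinning down these geometric constants so that the clean inequality with coefficient $\beta_{i_0}/\lambda_\eps$ holds; I expect the correct route is (i) take the shorter boundary arc, length $\le\pi r$; (ii) on it bound $\delta_\eps+u^2\le 1+\delta_\eps$; (iii) on the removed part use $\mathcal H^1\ge\Lambda_\eps r$ and integrand $\ge\delta_\eps$; (iv) observe $\delta_\eps\Lambda_\eps r-(1+\delta_\eps)\pi r=(3-\pi-(\pi-2)\delta_\eps)r>r$ fails, so instead the authors define $\Lambda_\eps$ via \eqref{Lambdaeps} so that $\delta_\eps\Lambda_\eps=2\delta_\eps+3$ and the genuine bound is $\Delta\ge(2\delta_\eps+3)r-2\pi r<0$ for small $\delta_\eps$... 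Recognizing I cannot reverse-engineer the exact constant bookkeeping from the excerpt alone, my plan is to write the proof with the shorter-arc replacement, record the energy difference as the single-curve integral difference over $\overline B(x,r)$, and carry out the two one-line bounds above, choosing at the writing stage whichever of ``length $\le\pi r$'' or ``length $\le 2\pi r$'' together with the integrand bound is consistent with $\Lambda_\eps=2+3/\delta_\eps$ to land exactly on $-\beta_{i_0}r/\lambda_\eps$; the reparametrization onto $[0,1]$ and the check that $\overrightarrow{\boldsymbol{\gamma}}_\sharp\in\mathscr{P}(a_0,\mu)$ are then routine, and the convexity of $\overline\Omega_0$ guarantees the replacement stays admissible.
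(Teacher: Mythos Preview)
Your replacement by a circular arc is the wrong geometric choice, and this is why the constants refuse to close. The paper replaces the portion of $\gamma_{i_0}$ between $a:=\gamma_{i_0}(t_{\rm in})$ and $b:=\gamma_{i_0}(t_{\rm out})$ by the \emph{straight segment} $[a,b]$. Since $a,b\in\overline B(x,r)$, this segment has length $\le 2r$ (not $\pi r$), and since $a,b\in\overline\Omega_0$ with $\overline\Omega_0$ convex, the segment automatically lies in $\overline\Omega_0$. With the segment, the bookkeeping is immediate:
\[
\int_{\Gamma(\gamma^\sharp_{i_0})\cap B(x,r)}(\delta_\eps+u^2)\,d\mathcal H^1\le (1+\delta_\eps)\cdot 2r,
\qquad
\int_{\Gamma(\gamma_{i_0})\cap B(x,r)}(\delta_\eps+u^2)\,d\mathcal H^1\ge \delta_\eps\Lambda_\eps r=(3+2\delta_\eps)r,
\]
and the difference is exactly $(3+2\delta_\eps)r-2(1+\delta_\eps)r=r$, giving the stated drop $\beta_{i_0}r/\lambda_\eps$. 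No interior-endpoint case distinction is needed beyond setting $t_{\rm in}=0$ or $t_{\rm out}=1$.

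Your arc approach has a second, more serious defect: your claim that the arc stays in $\overline\Omega_0$ is false in general. Convexity of $\overline\Omega_0$ gives no control over an arc of $\partial B(x,r)$; if $B(x,r)$ protrudes outside $\overline\Omega_0$ (which is perfectly possible since only the center $x$ is assumed to lie in $\overline\Omega_0$), the arc may leave the admissible domain. The straight segment avoids this entirely.
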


\begin{proof}
Assume that \eqref{contrAlf} holds. We shall suitably modify $\Gamma(\gamma_{i_0})$ in $B(x,r)$ to produce the competitor $\overrightarrow{\boldsymbol{\gamma}}_\sharp$. We proceed as follows. We first define 
$$t_{\rm in}:=\begin{cases}
\sup\big\{t\in [0,1]:  \gamma_{i_0}(s)\not\in B(x,r) \text{ for all } s\in[0,t)\big\}& \text{if $a_0\not\in B(x,r)$}\,,\\
0 &\text{otherwise}\,,
\end{cases}
$$
and
$$t_{\rm out}:=\begin{cases}
\inf\big\{t\in [0,1]:  \gamma_{i_0}(s)\not\in B(x,r) \text{ for all } s\in(t,1]\big\}& \text{if $a_{i_0}\not\in B(x,r)$}\,,\\
1 &\text{otherwise}\,.
\end{cases}
$$
Then we set $a:=\gamma_{i_0}(t_{\rm in})$ and $b:=\gamma_{i_0}(t_{\rm out})$. We finally define
$$\gamma^\sharp_{i_0}(t):=\begin{cases}
\gamma_{i_0}(t) & \text{if $t\in[0,t_{\rm in}]\cup[t_{\rm out},1]$}\,,\\[5pt]
\displaystyle \frac{t-t_{\rm in}}{t_{\rm out}-t_{\rm in}}b+\frac{t_{\rm out}-t}{t_{\rm out}-t_{\rm in}}a & \text{if $t\in [t_{\rm in},t_{\rm out}]$}\,.
\end{cases}$$
Since $\overline\Omega_0$ is convex, we have 
$$\Gamma(\gamma_{i_0}^\sharp)\subset \big(\Gamma(\gamma_{i_0})\setminus B(x,r)\big)\cup[a,b]\subset\overline\Omega_0\,.$$
Now we estimate 
$$\frac{\beta_{i_0}}{\lambda_\eps}\int_{\Gamma(\gamma^\sharp_{i_0})\cap B(x,r)}\delta_\eps+u^2\,d\mathcal{H}^1\leq \frac{2\beta_{i_0}}{\lambda_\eps}(1+\delta_\eps)r \,,$$
and 
$$\frac{\beta_{i_0}}{\lambda_\eps}\int_{\Gamma(\gamma_{i_0})\cap B(x,r)}\delta_\eps+u^2\,d\mathcal{H}^1\geq\frac{\beta_{i_0}\delta_\eps}{\lambda_\eps}\mathcal{H}^1\big(\Gamma(\gamma_{i_0})\cap B(x,r)\big)\geq \frac{\beta_{i_0}}{\lambda_\eps}(3+2\delta_\eps) r \,.$$
Since $\big(\Gamma(\gamma^\sharp_{i_0})\setminus B(x,r)\big)\subset \big(\Gamma(\gamma_{i_0})\setminus B(x,r)\big)$, we conclude that
$$E^\mu_\eps(u,\overrightarrow{\boldsymbol{\gamma}}) - E^\mu_\eps(u,\overrightarrow{\boldsymbol{\gamma}}_\sharp)\geq \frac{\beta_{i_0}}{\lambda_\eps}(3+2\delta_\eps) r - \frac{2\beta_{i_0}}{\lambda_\eps}(1+\delta_\eps)r=\frac{\beta_{i_0}r}{\lambda_\eps}\,,$$
and the proof is complete. 
\end{proof}

The following lemma provides the existence of a minimizer $\overrightarrow{\boldsymbol{\gamma}}_\sharp$  in $\mathscr{P}_{\Lambda_\veps}(a_0,\mu)$ associated to some fixed smooth function $u$.

\begin{lemma}\label{geodlem}
 Let $u\in 1+H^1_0(\Omega)\cap C^1(\overline\Omega)$ be such that $0\leq u\leq 1$.  There exists $\overrightarrow{\boldsymbol{\gamma}}_\sharp=(\gamma_1^\sharp,\ldots,\gamma_N^\sharp)\in \mathscr{P}_{\Lambda_\veps}(a_0,\mu)$ satisfying 
\begin{equation}\label{geodprop}
E^\mu_\varepsilon(u,\overrightarrow{\boldsymbol{\gamma}}_\sharp)\leq  E^\mu_\varepsilon(u,\overrightarrow{\boldsymbol{\gamma}})\qquad \forall \overrightarrow{\boldsymbol{\gamma}}\in \mathscr{P}(a_0,\mu)\,,
\end{equation}
and such that each $\gamma_i^\sharp:[0,1]\to\overline\Omega_0$ is injective if $a_i\not=a_0$, and constant if $a_i=a_0$. 
\end{lemma}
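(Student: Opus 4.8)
The plan is to fix the function $u\in 1+H^1_0(\Omega)\cap C^1(\overline\Omega)$ with $0\leq u\leq 1$ and minimize, for each index $i$ separately, the functional
$$\gamma\in\mathscr{P}(a_0,a_i)\ \longmapsto\ \int_{\Gamma(\gamma)}(\delta_\varepsilon+u^2)\,d\mathcal{H}^1\,,$$
since $E^\mu_\varepsilon(u,\overrightarrow{\boldsymbol{\gamma}})$ decouples over $i$ up to the terms not involving $\overrightarrow{\boldsymbol{\gamma}}$. Because $u$ is continuous, $\delta_\varepsilon+u^2$ is a continuous weight bounded below by $\delta_\varepsilon>0$, so this is precisely the problem of finding a geodesic for the conformal metric $w:=\delta_\varepsilon+u^2$ between $a_0$ and $a_i$ inside the compact convex set $\overline\Omega_0$. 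If $a_i=a_0$ the constant curve is optimal (the infimum is $0$), so assume $a_i\neq a_0$. First I would invoke the standard existence theory for geodesics in a continuous conformal metric on a compact set: taking a minimizing sequence $\gamma_n$, one may assume (after reparametrization) each $\gamma_n$ is parametrized proportionally to $w$-arclength, hence uniformly Lipschitz since the $w$-lengths are bounded and $w\geq\delta_\varepsilon$; by Arzelà–Ascoli a subsequence converges uniformly to some $\gamma_i^\sharp\in\mathscr{P}(a_0,a_i)$, and lower semicontinuity of the $w$-length functional under uniform convergence (a consequence of Go\l{}ab's theorem, or directly of Fatou applied to the length as a supremum of finite sums) gives that $\gamma_i^\sharp$ realizes the infimum. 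This yields a minimizer $\overrightarrow{\boldsymbol{\gamma}}_\sharp$ of $E^\mu_\varepsilon(u,\cdot)$, establishing \eqref{geodprop}.

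Next I would extract the two structural properties. For injectivity: if a minimizing $w$-geodesic $\gamma_i^\sharp$ visited the same point $p$ at two parameters $s<t$, one could excise the loop $\gamma_i^\sharp|_{[s,t]}$ and reparametrize, strictly decreasing the $w$-length unless that loop has zero $w$-length — impossible since $w\geq\delta_\varepsilon>0$ forces any nonconstant subarc to have positive length. Hence after this surgery (iterated, or done once via a suitable supremum-of-parameters argument as in the proof of Lemma \ref{replace}) the curve is injective on $[0,1]$; if $a_i=a_0$ the only such minimizer is constant. For the Ahlfors upper bound ${\bf Al}(\Gamma(\gamma_i^\sharp))\leq\Lambda_\varepsilon=2+3/\delta_\varepsilon$: suppose for contradiction that $\mathcal{H}^1(\Gamma(\gamma_i^\sharp)\cap B(x,r))>\Lambda_\varepsilon r$ for some $x\in\Gamma(\gamma_i^\sharp)$ and $r>0$; then Lemma \ref{replace} (applied with the present $u$, which satisfies $\|u\|_{L^\infty}\leq 1$) produces a competitor $\overrightarrow{\boldsymbol{\gamma}}_\sharp'$ with $E^\mu_\varepsilon(u,\overrightarrow{\boldsymbol{\gamma}}_\sharp')\leq E^\mu_\varepsilon(u,\overrightarrow{\boldsymbol{\gamma}}_\sharp)-\beta_{i_0}r/\lambda_\varepsilon<E^\mu_\varepsilon(u,\overrightarrow{\boldsymbol{\gamma}}_\sharp)$, contradicting minimality. (One must check that the replacement in Lemma \ref{replace} preserves membership in $\mathscr{P}(a_0,\mu)$, which it does by construction.) Thus $\overrightarrow{\boldsymbol{\gamma}}_\sharp\in\mathscr{P}_{\Lambda_\varepsilon}(a_0,\mu)$.

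There is a subtlety in the order of operations: the injectivity surgery and the Ahlfors bound should be applied to a curve that is \emph{already} $w$-length-minimizing, and the surgeries of Lemma \ref{replace} or of the loop-excision do not increase $E^\mu_\varepsilon(u,\cdot)$, so one can first minimize, then clean up, without losing minimality — indeed any minimizer of the (strictly positive, continuous-weight) length functional between two distinct points is automatically injective and automatically satisfies the Ahlfors bound by the contradiction arguments above, so the cleanup is really a statement that \emph{every} minimizer has these properties. The main obstacle I anticipate is the lower semicontinuity step in the existence argument: one needs that $\gamma\mapsto\int_{\Gamma(\gamma)}w\,d\mathcal{H}^1$ is lower semicontinuous along a uniformly convergent sequence of equi-Lipschitz curves, which is where Go\l{}ab's theorem on the $\mathcal{H}^1$-measure of limits of connected sets (together with continuity of $w$) enters; alternatively one can bypass Go\l{}ab by working with the $w$-length defined as $\sup$ over partitions $\sum_k w(\gamma(t_k))|\gamma(t_{k+1})-\gamma(t_k)|$ refined to account for $w$ varying, which is lower semicontinuous under uniform convergence essentially by definition. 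Everything else is routine.
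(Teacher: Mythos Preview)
Your overall strategy is correct and close to the paper's, but there is one genuine difference in the existence step and one small gap in the injectivity step.

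\textbf{Difference in existence.} You minimize the \emph{geometric} functional $\gamma\mapsto\int_{\Gamma(\gamma)}w\,d\mathcal{H}^1$ directly, via Arzel\`a--Ascoli compactness and a weighted Go\l{}ab lower-semicontinuity argument. The paper instead minimizes the \emph{parametric} functional $\gamma\mapsto\int_0^1 w(\gamma(t))|\gamma'(t)|\,dt$, for which existence is off-the-shelf (it cites \cite[Theorem~5.22]{BGH}); the minimizer then has $w(\gamma_i^\sharp)|(\gamma_i^\sharp)'|\equiv h_i>0$ a.e. The paper then uses the area formula (together with the extraction of an injective subcurve of an arbitrary competitor via \cite[Theorem~4.4.7]{AT}) to pass from the parametric minimum to the geometric inequality \eqref{geodprop}. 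Your route is more direct but puts the weight on the lsc step; the paper's route trades that for the area-formula bookkeeping. Both are valid. The Ahlfors bound via Lemma~\ref{replace} is identical in both.

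\textbf{Gap in injectivity.} Your loop-excision argument is correct for the \emph{parametric} $w$-length but not for the \emph{geometric} functional you are actually minimizing. If $\gamma$ backtracks over its own image (say it traces a segment $[0,1]$, returns to $1/2$, then goes forward to $1$ again), excising the loop $\gamma|_{[s,t]}$ with $\gamma(s)=\gamma(t)$ leaves $\Gamma(\gamma)$ unchanged, so $\int_{\Gamma(\gamma)}w\,d\mathcal{H}^1$ does not decrease. Thus a minimizer of the geometric functional need not be injective, contrary to your final parenthetical claim. The fix is immediate: given any geometric minimizer $\gamma$, one has $\mathcal{H}^1(\Gamma(\gamma))\leq\delta_\varepsilon^{-1}\int_{\Gamma(\gamma)}w<\infty$, so \cite[Theorem~4.4.7]{AT} yields an injective $\tilde\gamma\in\mathscr{P}(a_0,a_i)$ with $\Gamma(\tilde\gamma)\subset\Gamma(\gamma)$, hence $\int_{\Gamma(\tilde\gamma)}w\leq\int_{\Gamma(\gamma)}w$, and $\tilde\gamma$ is an injective minimizer. (The paper avoids this issue precisely because its loop-excision is applied to the parametric functional, where the excised portion contributes $h_i(t-s)>0$.)
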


\begin{proof}
If $a_i=a_0$, we choose $\gamma_i^\sharp$ to be the constant map equal to $a_i$. Then,  for each  $a_i\not=a_0$, we consider the minimization problem
$$\min_{\mathscr{P}(a_0,a_i)} \int_0^1\big(\delta_\varepsilon+u^2(\gamma(t))\big)|\gamma^\prime(t)|\,dt\,. $$
By \cite[Theorem 5.22]{BGH} this problem admits a solution $\gamma_i^\sharp$ satisfying 
$$\big(\delta_\varepsilon+u^2(\gamma_i^\sharp(t))\big)|(\gamma_i^\sharp)^\prime(t)|=h_i\quad\text{a.e. in $(0,1)$}\,, $$
for some constant $h_i>0$. We claim that $\gamma_i^\sharp$ is injective. Indeed, if $\gamma_i^\sharp(t_1)=\gamma_i^\sharp(t_2)$ for some $t_1<t_2$, then we can consider the competitor $\widetilde \gamma_i^\sharp\in\mathscr{P}(a_0,a_i)$ defined by 
$$\widetilde \gamma_i^\sharp(t)\begin{cases} 
\gamma_i^\sharp(t) & \text{for $t\in[0,t_1]$}\,,\\
\gamma_i^\sharp(t_1) & \text{for $t\in[t_1,t_2]$}\,,\\
\gamma_i^\sharp(t_1) & \text{for $t\in[t_2,1]$}\,.
\end{cases}$$
Comparing energies, we have 
$$ \int_0^1\big(\delta_\varepsilon+u^2(\widetilde \gamma_i^\sharp(t))\big)|(\widetilde \gamma_i^\sharp)^\prime(t)|\,dt-\int_0^1\big(\delta_\varepsilon+u^2(\gamma_i^\sharp(t))\big)|( \gamma_i^\sharp)^\prime(t)|\,dt=-h_i(t_2-t_2)<0\,,$$
which contradicts the minimality of $\gamma_i^\sharp$. 

Now we set $\overrightarrow{\boldsymbol{\gamma}}_\sharp=(\gamma_1^\sharp,\ldots,\gamma_N^\sharp)$, and we claim that  \eqref{geodprop} holds. Clearly, it is enough to show that for each $i\in\{1,\ldots,N\}$, 
\begin{equation}\label{intrgeodprop}
\int_{\Gamma(\gamma_i^\sharp)}(\delta_\varepsilon+u^2)\,d\mathcal{H}^1\leq  \int_{\Gamma(\gamma)}(\delta_\varepsilon+u^2)\,d\mathcal{H}^1\qquad\forall \gamma\in\mathscr{P}(a_0,a_i)\,.
\end{equation}
Obviously, this inequality holds if $a_i=a_0$ since the left hand side vanishes. Hence we may assume that $a_i\not=a_0$. Let us then consider an arbitrary $\gamma\in\mathscr{P}(a_0,a_i)$. Since $\mathcal{H}^1(\Gamma(\gamma))<\infty$, \cite[Theorem~4.4.7]{AT} tells us that there exists an injective curve $\widetilde \gamma\in \mathscr{P}(a_0,a_i)$ such that 
$\Gamma(\widetilde\gamma)\subset\Gamma(\gamma)$.  Now we infer from the area formula (see e.g. \cite[Theorem 2.71]{AFP}) and the minimality of $\gamma_i^\sharp$ that 
\begin{multline*}
\int_{\Gamma(\gamma_i^\sharp)}(\delta_\varepsilon+u^2)\,d\mathcal{H}^1=\int_0^1\big(\delta_\varepsilon+u^2(\gamma_i^\sharp(t))\big)|( \gamma_i^\sharp)^\prime(t)|\,dt\\
\leq \int_0^1\big(\delta_\varepsilon+u^2(\widetilde \gamma(t))\big)|\widetilde\gamma^\prime(t)|\,dt= \int_{\Gamma(\widetilde\gamma)}(\delta_\varepsilon+u^2)\,d\mathcal{H}^1\leq  \int_{\Gamma(\gamma)}(\delta_\varepsilon+u^2)\,d\mathcal{H}^1\,,
\end{multline*}
and \eqref{intrgeodprop} is proved. 

Finally, we notice that $\overrightarrow{\boldsymbol{\gamma}}_\sharp\in\mathscr{P}_{\Lambda_\veps}(a_0,\mu)$ as a direct consequence of \eqref{geodprop} and Lemma~\ref{replace}, and the proof is complete. 
\end{proof}

The next lemma will allow us to replace an arbitrary pair $(u,\overrightarrow{\boldsymbol{\gamma}})$ by a regular one, with controlled energy.

\begin{lemma}\label{goodpairs}
For every $\sigma>0$, $u\in 1+H^1_0(\Omega)$, and $\overrightarrow{\boldsymbol{\gamma}}\in\mathscr{P}(a_0,\mu)$, there exist  
$u_\sigma\in 1+H^1_0(\Omega)\cap C^1(\overline\Omega) $ and $\overrightarrow{\boldsymbol{\gamma}}_\sigma\in\mathscr{P}_{\Lambda_\eps}(a_0,\mu)$ such that $0\leq  u_\sigma\leq 1$ and 
$$E^\mu_\eps(u_\sigma,\overrightarrow{\boldsymbol{\gamma}}_\sigma)\leq E^\mu_\eps(u,\overrightarrow{\boldsymbol{\gamma}})+\sigma\,. $$
\end{lemma}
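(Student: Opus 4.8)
The plan is to prove Lemma~\ref{goodpairs} in two independent regularization steps: first replace $\overrightarrow{\boldsymbol{\gamma}}$ by a well-behaved competitor using Lemma~\ref{geodlem}, then mollify $u$ while controlling the change in energy. I would begin by reducing immediately to the case $0\leq u\leq 1$: if $u\in 1+H^1_0(\Omega)$ is arbitrary, the truncation $\bar u:=\max(\min(u,1),0)$ still lies in $1+H^1_0(\Omega)$, satisfies $|\nabla\bar u|\leq|\nabla u|$ a.e., and (arguing with precise representatives exactly as in the proof of Lemma~\ref{bound1}) gives $E^\mu_\eps(\bar u,\overrightarrow{\boldsymbol{\gamma}})\leq E^\mu_\eps(u,\overrightarrow{\boldsymbol{\gamma}})$ for every $\overrightarrow{\boldsymbol{\gamma}}$. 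So from now on I assume $0\leq u\leq 1$.

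Next I would mollify $u$. Fix a standard mollifier and, for small $t>0$, set $u_t:=(\rho_t * \tilde u)$ where $\tilde u$ is a suitable extension of $u-1$ (or work with a convex combination that keeps the boundary condition: e.g. take $u_t$ to be the mollification on a slightly shrunk domain glued to the constant $1$ via a cutoff, so that $u_t\in 1+H^1_0(\Omega)\cap C^\infty(\overline\Omega)$, $0\leq u_t\leq 1$, and $u_t\to u$ strongly in $H^1(\Omega)$ and in $L^2(\Omega)$). Strong $H^1$ convergence handles the Dirichlet and potential terms: $\eps\int_\Omega|\nabla u_t|^2+\frac1{4\eps}\int_\Omega(u_t-1)^2\to\eps\int_\Omega|\nabla u|^2+\frac1{4\eps}\int_\Omega(u-1)^2$. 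The delicate term is the geodesic/line-integral part $\frac1{\lambda_\eps}\sum_i\beta_i\int_{\Gamma(\gamma_i)}(\delta_\eps+u^2)\,d\mathcal H^1$, because $\mathcal H^1\res\Gamma(\gamma_i)$ is singular with respect to Lebesgue measure and a general $\overrightarrow{\boldsymbol{\gamma}}\in\mathscr P(a_0,\mu)$ need not satisfy the Ahlfors bound, so Lemma~\ref{lemfond} does not apply directly. To circumvent this I would first pass to $\overrightarrow{\boldsymbol{\gamma}}_\sharp\in\mathscr P_{\Lambda_\eps}(a_0,\mu)$ given by Lemma~\ref{geodlem} applied to (a $C^1$ approximation of) $u$ — but since Lemma~\ref{geodlem} needs $u\in C^1$, the cleaner order is: mollify first to get $u_t$, then apply Lemma~\ref{geodlem} to $u_t$ to obtain $\overrightarrow{\boldsymbol{\gamma}}_t\in\mathscr P_{\Lambda_\eps}(a_0,\mu)$ with $E^\mu_\eps(u_t,\overrightarrow{\boldsymbol{\gamma}}_t)\leq E^\mu_\eps(u_t,\overrightarrow{\boldsymbol{\gamma}})$. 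Then it suffices to show $E^\mu_\eps(u_t,\overrightarrow{\boldsymbol{\gamma}})\leq E^\mu_\eps(u,\overrightarrow{\boldsymbol{\gamma}})+\sigma$ for $t$ small, with the fixed original curve $\overrightarrow{\boldsymbol{\gamma}}$.

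For that last inequality, the main obstacle is estimating $\int_{\Gamma(\gamma_i)}(u_t^2-u^2)\,d\mathcal H^1$. Here I would use that $\Gamma(\gamma_i)$ is a rectifiable curve, so $\mathcal H^1\res\Gamma(\gamma_i)$ is a finite measure, together with the pointwise bound $|u_t^2-u^2|\leq 2|u_t-u|$ (since $0\leq u,u_t\leq 1$), reducing matters to $\int_{\Gamma(\gamma_i)}|u_t-u|\,d\mathcal H^1\to 0$. Since $u_t=\rho_t*u$ is uniformly bounded and, along a further subsequence $t\to0$, converges to $u$ not just in $L^1(\Omega)$ but also pointwise $\mathcal H^1$-a.e. on $\Gamma(\gamma_i)$ at points where $u$ is approximately continuous — and $u^*$ is approximately continuous $\mathcal H^1$-a.e. once $u\in W^{1,1}$, hence $\mathcal H^1\res\Gamma(\gamma_i)$-a.e. after possibly passing to the truncated/smoothed object — dominated convergence on the finite measure $\mathcal H^1\res\Gamma(\gamma_i)$ gives the claim. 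A subtlety worth flagging: mollification evaluated at a point $x$ is a genuine average of $u$ over a small ball, and for $W^{1,1}$ functions the precise representative is $\mathcal H^1$-a.e. the Lebesgue value, so $u_t(x)\to u^*(x)$ at $\mathcal H^1$-a.e. $x$ on $\Gamma(\gamma_i)$; combined with $|u_t|\leq 1$ this is exactly the dominated-convergence setup. Choosing $t$ small enough that both the $H^1$-part and the curve-part each change by less than $\sigma/2$, and setting $u_\sigma:=u_t$, $\overrightarrow{\boldsymbol{\gamma}}_\sigma:=\overrightarrow{\boldsymbol{\gamma}}_t$, completes the proof. The expected hard point is precisely this interchange of mollification and the $\mathcal H^1$-singular integration along the curve; everything else is soft functional analysis plus the already-proved Lemmas~\ref{geodlem} and \ref{replace}.
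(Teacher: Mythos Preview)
Your proof is correct and follows the same overall architecture as the paper: reduce to $0\leq u\leq 1$ by truncation, approximate $u$ by smooth functions in $1+H^1_0(\Omega)\cap C^1(\overline\Omega)$ while controlling the line integrals along the fixed curve $\overrightarrow{\boldsymbol{\gamma}}$, and finally invoke Lemma~\ref{geodlem} to obtain $\overrightarrow{\boldsymbol{\gamma}}_\sigma\in\mathscr{P}_{\Lambda_\eps}(a_0,\mu)$. The only substantive difference is in how you justify convergence of $\int_{\Gamma(\gamma_i)}u_t^2\,d\mathcal H^1$: the paper takes an abstract $C_c^\infty$-approximation of $v=1-u$ in $H^1_0(\Omega)$ and then invokes capacity theory (\cite[Theorem~4.1.2]{BB}) to extract a subsequence converging quasi-everywhere, hence $\mathcal H^1$-a.e.\ on each $\Gamma(\gamma_i)$; you instead use that mollifications converge to the precise representative at every Lebesgue point, and that $\mathcal H^1(S_u)=0$ for $u\in W^{1,1}$ (already recorded in Subsection~\ref{SecPrecRep}). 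Both routes feed into the same dominated-convergence step, and yours is arguably more self-contained given the material in the paper.

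One point you leave vague is the construction keeping the boundary condition: a plain mollification of the zero-extension of $v=1-u$ need not have support in $\Omega$. The clean fix (which also underlies the density of $C_c^\infty$ in $H^1_0$) is to first multiply $v$ by a cutoff $\psi_n$ equal to $1$ on $\{\mathrm{dist}(\cdot,\partial\Omega)>2/n\}$ and then mollify at scale $t_n\ll 1/n$; since $\overline\Omega_0\subset\Omega$ with $\mathrm{dist}(\overline\Omega_0,\partial\Omega)\geq\boldsymbol{\eta}_0>0$, for large $n$ the cutoff is identically $1$ on a neighborhood of $\overline\Omega_0$, so on each $\Gamma(\gamma_i)$ your approximant coincides with a genuine mollification of $v$ and the Lebesgue-point argument applies. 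The $H^1$-convergence $\psi_n v\to v$ is the standard argument (e.g.\ via Hardy's inequality). With this made explicit, your proof is complete.
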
 
 
 \begin{proof}
 We first claim that there exists $\widetilde u\in 1+H^1_0(\Omega)\cap C^0(\overline\Omega) $ such that $0\leq \widetilde u\leq 1$ and 
 $$E^\mu_\eps(\widetilde u,\overrightarrow{\boldsymbol{\gamma}})\leq E^\mu_\eps(u,\overrightarrow{\boldsymbol{\gamma}})+\sigma\,. $$
 Without loss of generality, we may assume that $E^\mu_\eps(u,\overrightarrow{\boldsymbol{\gamma}})<\infty$.
Moreover, by the truncation argument in the proof of Lemma \ref{bound1}, we can reduce the question to the case $0\leq u \leq 1$. Then write $u=1-v$ with $v\in H^1_0(\Omega)$. Since $C_c^\infty(\Omega)$ is dense in $H^1_0(\Omega)$, we can find a sequence $(v_n)_{n\in\mathbb{N}}\subset C_c^\infty(\Omega)$ such that $v_n\to v$ strongly in $H^1_0(\Omega)$ as $n\to\infty$. Since $0\leq v\leq 1$, we may even assume that $0\leq v_n\leq 1$. By \cite[Theorem~4.1.2]{BB} we can find a (not relabeled) subsequence such that $v_n\to v$ quasi-everywhere in $\Omega$ (i.e., $v_n\to v$ in the pointwise sense away from a set of vanishing $H^1$-capacity). Since a set of vanishing $H^1$-capacity is $\mathcal{H}^1$-null, we deduce that $v_n\to v$ $\mathcal{H}^1$-a.e. on $\Gamma(\overrightarrow{\boldsymbol{\gamma}})$. Then, by the dominated convergence, we have for each $i\in\{1,\ldots,N\}$, 
 $$\int_{\Gamma(\gamma_i)} \delta_\eps+(1-v_n)^2\,d\mathcal{H}^1\to \int_{\Gamma(\gamma_i)} \delta_\eps+(1-v)^2\,d\mathcal{H}^1\,.$$
 Setting $u_n:=1-v_n$, we conclude that for $n$ large enough, $E^\mu_\eps(u_n,\overrightarrow{\boldsymbol{\gamma}})\leq E^\mu_\eps(u,\overrightarrow{\boldsymbol{\gamma}})+\sigma$, and the claim is proved. 

Finally, we apply Lemma \ref{geodlem} to find $\overrightarrow{\boldsymbol{\gamma}}_\sharp\in\mathscr{P}_{\Lambda_\eps}(a_0,\mu)$ such that 
$$E^\mu_\eps(u_n,\overrightarrow{\boldsymbol{\gamma}}_\sharp)\leq E^\mu_\eps(u_n,\overrightarrow{\boldsymbol{\gamma}})\leq E^\mu_\eps(u,\overrightarrow{\boldsymbol{\gamma}})+\sigma\,,$$
and the announced result is proved for $u_\sigma:=u_n$ and  $\overrightarrow{\boldsymbol{\gamma}}_\sigma:=\overrightarrow{\boldsymbol{\gamma}}_\sharp$. 
\end{proof}
 
\begin{proof}[Proof of Theorem \ref{existminpairs}] 
{\it Step 1 (existence).} Let $\{(u_n,\overrightarrow{\boldsymbol{\gamma}}_n)\}_{n\in\mathbb{N}}$ be a minimizing sequence for $E_\varepsilon$ over $(1+H_0^1(\Omega))\times\mathscr{P}(a_0,\mu)$, i.e., 
 $$\lim_{n\to\infty}E^\mu_\varepsilon (u_n,\overrightarrow{\boldsymbol{\gamma}}_n)=\inf_{(1+H_0^1(\Omega))\times\mathscr{P}(a_0,\mu)}E^\mu_\varepsilon\,. $$
 By Lemma \ref{goodpairs}, there is no loss of generality assuming that $(u_n,\overrightarrow{\boldsymbol{\gamma}}_n)\in C^1(\overline\Omega)\times\mathscr{P}_{\Lambda_\varepsilon}(a_0,\mu)$ and $0\leq u_n\leq 1$. In addition, by Lemma \ref{geodlem} we can even assume that, setting $\overrightarrow{\boldsymbol{\gamma}}_n=(\gamma_1^n,\ldots,\gamma_N^n)$, all $\gamma_i^n$'s are injective curves for $a_i\not=a_0$, and constant for $a_i=a_0$. Then we consider the sequence $\{(u_{\overrightarrow{\boldsymbol{\gamma}}_n},\overrightarrow{\boldsymbol{\gamma}}_n)\}_{n\in\mathbb{N}}$, where $u_{\overrightarrow{\boldsymbol{\gamma}}_n}$ is the potential of $\overrightarrow{\boldsymbol{\gamma}}_n$, i.e., the minimizer of $E^\mu_\varepsilon(\cdot, \overrightarrow{\boldsymbol{\gamma}}_n)$ over $1+H^1_0(\Omega)$. Obviously, $\{(u_{\overrightarrow{\boldsymbol{\gamma}}_n},\overrightarrow{\boldsymbol{\gamma}}_n)\}_{n\in\mathbb{N}}$ is still a minimizing sequence by minimality of $u_{\overrightarrow{\boldsymbol{\gamma}}_n}$. 
 
 By Proposition \ref{Holder}, 
 $$\|u_{\overrightarrow{\boldsymbol{\gamma}}_n}\|_{C^{0,\alpha}(\Omega)}\leq C_{\alpha,\boldsymbol{\eta}_0}(\varepsilon)\qquad\forall \alpha\in(0,1)\,, $$
for some constant $C_{\alpha,\boldsymbol{\eta}_0}(\varepsilon)$ independent of $n$. By the Arzel\`a-Ascoli Theorem, we can extract a (not relabeled) subsequence such that $u_{\overrightarrow{\boldsymbol{\gamma}}_n}\to u_\varepsilon$ uniformly in $\Omega$ and weakly in $H^1(\Omega)$ for some function $u_\varepsilon\in 1+H^1_0(\Omega)\cap C^{0,\alpha}(\Omega)$ for every $\alpha\in (0,1)$. 

On the other hand, the energy being invariant under reparametrization, we can assume that each $\gamma_i^n$ is a constant speed parametrization of its image $\Gamma(\gamma_i^n)$. In particular,  each $\gamma_i^n$ is a $\mathcal{H}^1(\Gamma(\gamma_i^n))$-Lipschitz curve. Since 
$$\mathcal{H}^1(\Gamma(\gamma_i^n))\leq \frac{\lambda_\varepsilon}{\delta_\varepsilon} E^\mu_\varepsilon(u_{\overrightarrow{\boldsymbol{\gamma}}_n}, \overrightarrow{\boldsymbol{\gamma}}_n)\leq C(\varepsilon)\,,$$
we infer that each sequence $\{\gamma_i^n\}_{n\in\mathbb{N}}$ is equi-Lipschitz. Therefore, we can extract a further subsequence  such that, for each $i\in\{1,\ldots,N\}$, $\gamma_i^n\to\gamma_i^\varepsilon$ uniformly on $[0,1]$ and weakly* in $W^{1,\infty}(0,1)$ for some $\gamma_i^\varepsilon\in \mathscr{P}(a_0,a_i)$. Then we set $\overrightarrow{\boldsymbol{\gamma}}_\varepsilon:=(\gamma_1^\varepsilon,\ldots,\gamma_N^\varepsilon)\in\mathscr{P}(a_0,\mu)$. 

Let us now fix an arbitrary $\kappa\in(0,\delta_\varepsilon/2)$. By the uniform convergence of $u_{\overrightarrow{\boldsymbol{\gamma}}_n}$ towards $u_\varepsilon$, we have $u_\varepsilon^2\leq u^2_{\overrightarrow{\boldsymbol{\gamma}}_n}+\kappa$ in $\Omega$ for $n$ large enough. From the injectivity of each $\gamma_i^n$ (for $a_i\not=a_0$) and the area formula, we derive that for $a_i\not=a_0$ and $n$ large,
\begin{multline}\label{1546}
\int_{\Gamma(\gamma_i^n)}(\delta_\varepsilon+u^2_{\overrightarrow{\boldsymbol{\gamma}}_n})\,d\mathcal{H}^1\geq \int_{\Gamma(\gamma_i^n)}(\delta_\varepsilon-\kappa+u^2_\varepsilon)\,d\mathcal{H}^1\\
=\int_0^1\big(\delta_\varepsilon-\kappa+u^2_\varepsilon(\gamma_i^n(t))\big)|(\gamma_i^n)^\prime(t)|\,dt\,.
\end{multline}
Since $\gamma_i^n\mathop{\rightharpoonup}\limits^*\gamma_i^\varepsilon$ weakly* in $W^{1,\infty}((0,1))$, the lower semicontinuity result in \cite[Theorem~3.8]{MaSb} tells us that 
\begin{equation}\label{1551}
\liminf_{n\to\infty} \int_0^1\big(\delta_\varepsilon-\kappa+u^2_\varepsilon(\gamma_i^n(t))\big)|(\gamma_i^n)^\prime(t)|\,dt\geq \int_0^1\big(\delta_\varepsilon-\kappa+u^2_\varepsilon(\gamma_i^\varepsilon(t))\big)|(\gamma_i^\varepsilon)^\prime(t)|\,dt\,.
\end{equation}
By the area formula again, 
\begin{equation}\label{1552}
 \int_0^1\big(\delta_\varepsilon-\kappa+u^2_\varepsilon(\gamma_i^\varepsilon(t))\big)|(\gamma_i^\varepsilon)^\prime(t)|\,dt\geq \int_{\Gamma(\gamma_i^\varepsilon)}(\delta_\varepsilon-\kappa+u^2_\varepsilon)\,d\mathcal{H}^1\,.
 \end{equation}
Gathering \eqref{1546}, \eqref{1551}, \eqref{1552}, and letting $\kappa\to 0$, we deduce that 
$$ \liminf_{n\to\infty}\int_{\Gamma(\gamma_i^n)}(\delta_\varepsilon+u^2_{\overrightarrow{\boldsymbol{\gamma}}_n})\,d\mathcal{H}^1\geq  \int_{\Gamma(\gamma_i^\varepsilon)}(\delta_\varepsilon+u^2_\varepsilon)\,d\mathcal{H}^1\qquad\forall i\in\{1,\ldots,N\}\,.$$
(Note that for $a_i=a_0$, this inequality is trivial since $\gamma_i^n$ is the constant map equal to $a_0$.)  
Since the diffuse part of the energy is clearly lower semicontinuous with respect to weak $H^1$-convergence, we conclude that 
$$E^\mu_\varepsilon(u_\varepsilon,\overrightarrow{\boldsymbol{\gamma}}_\varepsilon)\leq \lim_{n\to\infty} E^\mu_\varepsilon (u_n,\overrightarrow{\boldsymbol{\gamma}}_n)\,,$$
and thus $(u_\varepsilon,\overrightarrow{\boldsymbol{\gamma}}_\varepsilon)$ is a minimizer of $E^\mu_\varepsilon$. 
\vskip3pt

\noindent{\it Step 2 (regularity).} Now we consider an arbitrary minimizer $(u_\varepsilon,\overrightarrow{\boldsymbol{\gamma}}_\varepsilon)$ of $E^\mu_\varepsilon$ in $(1+H_0^1(\Omega))\times\mathscr{P}(a_0,\mu)$. Arguing as in the proof of Lemma \ref{bound1}, we obtain $0\leq u_\varepsilon\leq 1$ by minimality of $u_\varepsilon$ for $E^\mu_\varepsilon(\cdot,\overrightarrow{\boldsymbol{\gamma}}_\varepsilon)$. In turn, the minimality of $\overrightarrow{\boldsymbol{\gamma}}_\varepsilon$ for $E^\mu_\varepsilon(u_\varepsilon,\cdot)$ implies that $\overrightarrow{\boldsymbol{\gamma}}_\varepsilon\in\mathscr{P}_{\Lambda_\varepsilon}(a_0,\mu)$ by Lemma \ref{replace}. Now Theorem \ref{existuniqthmgammafix} shows that $u_\varepsilon$ is the potential of $\overrightarrow{\boldsymbol{\gamma}}_\varepsilon$. 
\end{proof} 
 
 \begin{proof}[Proof of Corollary \ref{thmminF}]
 Existence of a minimizer of $F^\mu_\varepsilon$ in $1+H^1_0(\Omega)\cap L^\infty(\Omega)$ is ensured by Theorem \ref{existminpairs} since $\inf F^\mu_\varepsilon=\min E^\mu_\varepsilon$ by \eqref{relEF}. Let us now consider an arbitrary minimizer $u_\varepsilon$ of $F^\mu_\varepsilon$ in $1+H^1_0(\Omega)\cap L^\infty(\Omega)$. We first claim that $0\leq u_\varepsilon\leq 1$ a.e. in $\Omega$. Indeed, setting $v:=\max(\min(u_\varepsilon,1),0)\in 1+H^1_0(\Omega)$, we can argue as in the proof of Lemma \ref{bound1} to show $E^\mu_\varepsilon(v,\overrightarrow{\boldsymbol{\gamma}})\leq E^\mu_\varepsilon(u_\varepsilon,\overrightarrow{\boldsymbol{\gamma}})$ for every $\overrightarrow{\boldsymbol{\gamma}}\in\mathscr{P}(a_0,\mu)$. Hence $F^\mu_\varepsilon(v)\leq F^\mu_\varepsilon(u_\varepsilon)$ by  \eqref{relEF}, the inequality being strict whenever $\{v\not=u_\varepsilon\}$ has a non vanishing Lebesgue measure. The minimality of $u_\varepsilon$ then implies that $v=u_\varepsilon$ a.e. in $\Omega$. 
  
Next, by  definition of $F^\mu_\varepsilon$, there exists a sequence $\{\overrightarrow{\boldsymbol{\gamma}}_n\}_{n\in\mathbb{N}}\subset  \mathscr{P}(a_0,\mu)$ such that
 $$E^\mu_\varepsilon(u_\varepsilon,\overrightarrow{\boldsymbol{\gamma}}_n)\leq F^\mu_{\varepsilon}(u_\varepsilon)+2^{-n-1}\qquad\forall n\in\mathbb{N} \,.$$
 On the other hand, we can argue as in the proof of Lemma \ref{goodpairs} to find, for each $n\in\mathbb{N}$, a function $u_n\in (1+H_0^1(\Omega))\cap C^{1}(\overline\Omega)$ such that $0\leq u_n\leq 1$ in $\Omega$, $\|u_n-u_\varepsilon\|_{H^1(\Omega)}\leq 2^{-n}$, and 
 $$E^\mu_\varepsilon(u_n,\overrightarrow{\boldsymbol{\gamma}}_n)\leq E^\mu_\varepsilon(u_\varepsilon,\overrightarrow{\boldsymbol{\gamma}}_n)+2^{-n-1}\leq F^\mu_{\varepsilon}(u_\varepsilon)+2^{-n}\,. $$
 Applying Lemma \ref{geodlem} to each $u_n$, we find (injective or constant) curves $\overrightarrow{\boldsymbol{\gamma}}_{\sharp,n}\in \mathscr{P}_{\Lambda_\varepsilon}(a_0,\mu)$ of constant speed such that 
 $$  E^\mu_\varepsilon(u_n,\overrightarrow{\boldsymbol{\gamma}}_{\sharp,n})\leq E^\mu_\varepsilon(u_n,\overrightarrow{\boldsymbol{\gamma}}_n)\leq F^\mu_\varepsilon(u_\varepsilon)+2^{-n} \,.$$
 Now we consider the potential $u_{\overrightarrow{\boldsymbol{\gamma}}_{\sharp,n}}$ of $\overrightarrow{\boldsymbol{\gamma}}_{\sharp,n}$. Then, 
 \begin{equation}\label{est1620}
 E^\mu_\varepsilon(u_{\overrightarrow{\boldsymbol{\gamma}}_{\sharp,n}},\overrightarrow{\boldsymbol{\gamma}}_{\sharp,n})\leq
  E^\mu_\varepsilon(u_n,\overrightarrow{\boldsymbol{\gamma}}_{\sharp,n})\leq F^\mu_\varepsilon(u_\varepsilon)+2^{-n} \,.
  \end{equation}
  Setting $w_n:=u_n-u_{\overrightarrow{\boldsymbol{\gamma}}_{\sharp,n}}\in H^1_0(\Omega)$, we infer from the equation \eqref{eqEL} satisfied by $u_{\overrightarrow{\boldsymbol{\gamma}}_{\sharp,n}}$ that 
 \begin{multline*}
 2^{-n}\geq E^\mu_\varepsilon(u_n,\overrightarrow{\boldsymbol{\gamma}}_{\sharp,n})-  E^\mu_\varepsilon(u_{\overrightarrow{\boldsymbol{\gamma}}_{\sharp,n}},\overrightarrow{\boldsymbol{\gamma}}_{\sharp,n})=\varepsilon\int_{\Omega}|\nabla w_n|^2\,dx+\frac{1}{4\varepsilon}\int_{\Omega}|w_n|^2\,dx\\
 +\frac{1}{\lambda_\varepsilon}B_\mu[\overrightarrow{\boldsymbol{\gamma}}_{\sharp,n}](w_n,w_n)\,.
 \end{multline*}
 Consequently, $\|w_n\|_{H^1(\Omega)}\leq C_\varepsilon2^{-n/2}$, so that $\|u_\varepsilon-u_{\overrightarrow{\boldsymbol{\gamma}}_{\sharp,n}}\|_{H^1(\Omega)}\leq C_\varepsilon2^{-n/2}$. On the other hand, the sequence $\{u_{\overrightarrow{\boldsymbol{\gamma}}_{\sharp,n}}\}$  remains bounded in $W^{1,p}(\Omega)$ for each $p<\infty$ by Proposition \ref{W1p}.  Since $u_{\overrightarrow{\boldsymbol{\gamma}}_{\sharp,n}}\to u_\varepsilon$ in $H^1(\Omega)$, we conclude that $u_\varepsilon\in W^{1,p}(\Omega)$ for each $p<\infty$. In particular, $u_\varepsilon\in C^{0,\alpha}(\Omega)$ for every $\alpha\in(0,1)$, and $u_{\overrightarrow{\boldsymbol{\gamma}}_{\sharp,n}}\to u_\varepsilon$ uniformly  in $\Omega$.

To conclude, we proceed as in the proof of Theorem  \ref{existminpairs}, Step 1: for a (not relabeled) subsequence, $\overrightarrow{\boldsymbol{\gamma}}_{\sharp,n}\mathop{\rightharpoonup}\limits^*\overrightarrow{\boldsymbol{\gamma}}_{\varepsilon}$ weakly* in $W^{1,\infty}(0,1)$ for some $\overrightarrow{\boldsymbol{\gamma}}_{\varepsilon}\in \mathscr{P}(a_0,\mu)$, and 
$$\liminf_{n\to\infty}E^\mu_\varepsilon(u_{\overrightarrow{\boldsymbol{\gamma}}_{\sharp,n}},\overrightarrow{\boldsymbol{\gamma}}_{\sharp,n})\geq E^\mu_\varepsilon(u_\varepsilon,\overrightarrow{\boldsymbol{\gamma}}_{\varepsilon})\geq F^\mu_\varepsilon(u_\varepsilon)\,.$$
 In view of \eqref{est1620}, we have $F^\mu_\varepsilon(u_\varepsilon)=E^\mu_\varepsilon(u_\varepsilon,\overrightarrow{\boldsymbol{\gamma}}_{\varepsilon})$, which shows that $(u_\varepsilon,\overrightarrow{\boldsymbol{\gamma}}_{\varepsilon})$ is a minimizer of $E^\mu_\varepsilon$ in $(1+H^1_0(\Omega))\times\mathscr{P}(a_0,\mu)$. 
\end{proof}
 
 \begin{proof}[Proof of Corollary \ref{thmminG}]
 Existence of a minimizer of $G^\mu_\varepsilon$ is ensured by Theorem~\ref{existminpairs} since $\inf G^\mu_\varepsilon=\min E^\mu_\varepsilon$. Let us now consider an arbitrary minimizer $\overrightarrow{\boldsymbol{\gamma}}_{\varepsilon}$ in  $\mathscr{P}(a_0,\mu)$. We first claim that $\overrightarrow{\boldsymbol{\gamma}}_{\varepsilon}=(\gamma_1^\varepsilon,\ldots,\gamma_N^\varepsilon)\in \mathscr{P}_{2\Lambda_\varepsilon}(a_0,\mu)$. Assume by contradiction that it does not belongs to $\mathscr{P}_{2\Lambda_\varepsilon}(a_0,\mu)$. Then we can find $i_0\in\{1,\ldots,N\}$, $x_0\in\Gamma(\gamma^\varepsilon_{i_0})$, and $r>0$ such that  
 $$\mathcal{H}^1(\Gamma(\gamma_{i_0}^\varepsilon)\cap B(x_0,r))\geq \Lambda_\varepsilon r\,. $$
 By the very definition of $G^\mu_\varepsilon$, we can find $\widetilde u\in 1+H^1_0(\Omega)$ such that 
$$E^\mu_\varepsilon(\widetilde u,\overrightarrow{\boldsymbol{\gamma}}_{\varepsilon})\leq G^\mu_\varepsilon(\overrightarrow{\boldsymbol{\gamma}}_{\varepsilon})+\frac{\beta_{i_0}r}{2\lambda_\varepsilon}\,.$$
Arguing as in the proof of Lemma \ref{bound1}, we may assume that $0\leq \widetilde u\leq  1$. Then, by Lemma~\ref{replace} there exists  $\overrightarrow{\boldsymbol{\gamma}}_{\sharp}\in \mathscr{P}(a_0,\mu)$ such that 
$$G^\mu_\varepsilon(\overrightarrow{\boldsymbol{\gamma}}_{\sharp})\leq E^\mu_\varepsilon(\widetilde u,\overrightarrow{\boldsymbol{\gamma}}_{\sharp})\leq  E^\mu_\varepsilon(\widetilde u,\overrightarrow{\boldsymbol{\gamma}}_{\varepsilon})-\frac{\beta_{i_0}r}{\lambda_\varepsilon}\leq G^\mu_\varepsilon(\overrightarrow{\boldsymbol{\gamma}}_{\varepsilon})-\frac{\beta_{i_0}r}{2\lambda_\varepsilon}<G^\mu_\varepsilon(\overrightarrow{\boldsymbol{\gamma}}_{\varepsilon})\,,$$
which contradicts the minimality of $\overrightarrow{\boldsymbol{\gamma}}_{\varepsilon}$. 

Since $\overrightarrow{\boldsymbol{\gamma}}_{\varepsilon}\in  \mathscr{P}_{2\Lambda_\varepsilon}(a_0,\mu)$, we conclude that $G^\mu(\overrightarrow{\boldsymbol{\gamma}}_{\varepsilon})=E^\mu_\varepsilon(u_{\overrightarrow{\boldsymbol{\gamma}}_{\varepsilon}},\overrightarrow{\boldsymbol{\gamma}}_{\varepsilon})$, so that $(u_{\overrightarrow{\boldsymbol{\gamma}}_{\varepsilon}},\overrightarrow{\boldsymbol{\gamma}}_{\varepsilon})$ is minimizing $E^\mu_\varepsilon$ in $(1+H^1_0(\Omega))\times  \mathscr{P}(a_0,\mu)$. In particular, $\overrightarrow{\boldsymbol{\gamma}}_{\varepsilon}\in\mathscr{P}_{\Lambda_\varepsilon}(a_0,\mu)$ by Theorem  \ref{existminpairs}, 
and the proof is complete.  
\end{proof}

%================================================================================================================ 
%================================================================================================================ 

\section{The case of a general finite measure}\label{genmeassec}

%================================================================================================================ 
%=========================================================================

 \subsection{Existence and regularity for a general finite measure}
 
 We consider in this subsection an arbitrary  (non negative) finite measure $\mu$ supported in $\overline\Omega_0$, and we fix a base point $a_0\in\overline\Omega_0$. We are interested in existence and regularity of solutions of the minimization problem
 \begin{equation}\label{minprobgen}
 \min_{u\in 1+H^1_0(\Omega)\cap L^\infty(\Omega)}F^\mu_\varepsilon(u) \,.
 \end{equation}
 To pursue these issues, we rely on the results of the previous section. For this, we will need the following elementary lemma. 
 
 \begin{lemma}\label{approxmeas}
Let $\mu$ be a finite non negative measure supported on $\overline\Omega_0$. Then there exists a sequence of measures $\{\mu_k\}_{k\in\mathbb{N}}$ with finite support in $\overline\Omega_0$    such that $\mu_k\mathop{\rightharpoonup}\limits^*\mu$ and ${\rm spt}\,\mu_k\to{\rm spt}\,\mu$ in the Hausdorff  sense. 
 \end{lemma}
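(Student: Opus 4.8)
The plan is to obtain $\mu_k$ by discretizing $\mu$ on a partition of $\overline\Omega_0$ of mesh size tending to $0$, while \emph{placing the atoms of $\mu_k$ on $\mathrm{spt}\,\mu$ itself}, so that the inclusion ${\rm spt}\,\mu_k\subset{\rm spt}\,\mu$ and the convergence of supports become almost automatic. Concretely, for each $k\in\mathbb{N}$ I would fix a finite Borel partition $\overline\Omega_0=\bigsqcup_{j=1}^{N_k}Q_j^k$ with ${\rm diam}(Q_j^k)\leq 1/k$ for every $j$ (for instance intersecting $\overline\Omega_0$ with a grid of half-open squares of side $\leq 1/(2k)$). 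Since $\mu$ is a finite Radon measure on the compact set $\overline\Omega_0$, it satisfies $\mu(\mathbb{R}^2\setminus{\rm spt}\,\mu)=0$; hence $\mu(Q_j^k)>0$ forces $Q_j^k\cap{\rm spt}\,\mu\neq\emptyset$, and for each such index $j$ I pick a point $x_j^k\in Q_j^k\cap{\rm spt}\,\mu$. I then set
$$\mu_k:=\sum_{j\,:\,\mu(Q_j^k)>0}\mu(Q_j^k)\,\delta_{x_j^k}\,.$$
(If $\mu=0$ one simply takes $\mu_k=0$.)

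By construction $\mu_k$ has finite support, ${\rm spt}\,\mu_k=\{x_j^k:\mu(Q_j^k)>0\}\subset{\rm spt}\,\mu\subset\overline\Omega_0$, and $\|\mu_k\|=\sum_j\mu(Q_j^k)=\mu(\overline\Omega_0)=\|\mu\|$. For the weak-$*$ convergence, let $\varphi\in C^0(\overline\Omega_0)$ and let $\omega_\varphi$ be its modulus of continuity (finite by compactness of $\overline\Omega_0$). Writing $\int\varphi\,d\mu=\sum_j\int_{Q_j^k}\varphi\,d\mu$ and $\int\varphi\,d\mu_k=\sum_j\varphi(x_j^k)\mu(Q_j^k)$, and using that $y,x_j^k\in Q_j^k$ implies $|y-x_j^k|\leq 1/k$, I obtain
$$\Big|\int\varphi\,d\mu_k-\int\varphi\,d\mu\Big|\leq\sum_j\int_{Q_j^k}\big|\varphi(x_j^k)-\varphi(y)\big|\,d\mu(y)\leq\omega_\varphi(1/k)\,\|\mu\|\,.$$
The right-hand side tends to $0$ as $k\to\infty$, which is precisely $\mu_k\mathop{\rightharpoonup}\limits^{*}\mu$.

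It remains to check $\mathrm{spt}\,\mu_k\to\mathrm{spt}\,\mu$ in the Hausdorff sense. The inclusion ${\rm spt}\,\mu_k\subset{\rm spt}\,\mu$ already gives $\sup_{z\in{\rm spt}\,\mu_k}{\rm dist}(z,{\rm spt}\,\mu)=0$, so only the reverse estimate requires an argument, and this is the one delicate point of the proof: one must \emph{not} try to use the cell $Q_j^k$ that merely contains a given $x\in{\rm spt}\,\mu$, since that cell may carry zero $\mu$-mass (e.g. if $x$ sits on its relative boundary). Instead, given $x\in{\rm spt}\,\mu$, I use that $\mu\big(B(x,1/k)\big)>0$; decomposing this along the partition, some cell $Q_{j_0}^k$ satisfies $\mu\big(Q_{j_0}^k\cap B(x,1/k)\big)>0$, hence $\mu(Q_{j_0}^k)>0$ so $x_{j_0}^k$ is defined, and choosing $y\in Q_{j_0}^k\cap B(x,1/k)$ gives $|x-x_{j_0}^k|\leq|x-y|+|y-x_{j_0}^k|<1/k+1/k=2/k$. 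Therefore $\sup_{x\in{\rm spt}\,\mu}{\rm dist}(x,{\rm spt}\,\mu_k)\leq 2/k$, and the Hausdorff distance between ${\rm spt}\,\mu_k$ and ${\rm spt}\,\mu$ is at most $2/k\to0$. Everything apart from this last routing-around-null-cells observation is a direct computation.
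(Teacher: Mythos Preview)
Your proof is correct and follows essentially the same approach as the paper: discretize $\mu$ along a fine partition of $\overline\Omega_0$, place a Dirac mass in each cell carrying positive $\mu$-mass, and verify weak-$*$ convergence via a modulus-of-continuity estimate. The only (harmless) difference is that you place the atoms on ${\rm spt}\,\mu$ itself rather than merely in $\overline\Omega_0$, which makes one half of the Hausdorff convergence immediate; your ``routing-around-null-cells'' argument for the other half is exactly the care the paper's brief assertion ${\rm spt}\,\mu\subset\mathcal{T}_{2^{-k+2}}({\rm spt}\,\mu_k)$ implicitly requires.
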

 
 \begin{proof} 
For $k \in \N$, we denote by $\mathscr{C}_k$ be the standard family of dyadic semi-cubes in $\R^2$ of size $2^{-k}$, i.e.,
 $$\mathscr{C}_k:=\Big\{Q=2^{-k}z+2^{-k}\big([0,1)\times[0,1)\big): z\in \mathbb{Z}^2\Big\}\,.$$
 Then we define $\mathscr{C}_k^\prime:=\big\{ Q \in \mathscr{C}_k : Q\cap \overline\Omega_0 \not = \emptyset \big \}$, and  
 for each $Q\in \mathscr{C}_k^\prime$, we choose a point $a_Q\in Q\cap \overline\Omega_0$. We set 
 $$\mu_k:=\sum_{Q \in \mathscr{C}_k^\prime} \mu(Q\cap \overline\Omega_0) \delta_{a_Q}\,.$$
By construction, $\mu_k$ has finite support, $\|\mu_k\|=\|\mu\|$, and ${\rm spt}\,\mu_k\subset \overline\Omega_0\cap \mathcal{T}_{2^{-k+2}}({\rm spt}\,\mu)$ where $\mathcal{T}_{2^{-k+2}}({\rm spt}\,\mu)$ denotes the tubular neighborhood of radius $2^{-k+1}$ of ${\rm spt}\,\mu$. Similarly, ${\rm spt}\,\mu\subset  \mathcal{T}_{2^{-k+2}}({\rm spt}\,\mu_k)$, and we infer that  ${\rm spt}\,\mu_k\to{\rm spt}\,\mu$ in the Hausdorff  sense.

 We now claim that $\mu_k\mathop{\rightharpoonup}\limits^*\mu$ as measures on $\overline\Omega_0$. To prove this claim, let us fix an arbitrary function $\varphi \in C^0(\overline\Omega_0)$. Then we can find a (non decreasing) modulus of continuity $\omega:[0,\infty)\to[0,\infty)$ satisfying $\omega(t)\to 0$ as $t\downarrow 0$ such that 
$$\sup_{|x-y|\leq t}|\varphi(x)-\varphi(y)|\leq \omega(t)\,. $$
Now we estimate  
 $$\left| \int\varphi d \mu_k - \int \varphi d \mu \right| \leq \sum_{Q \in \mathscr{C}_k^\prime}\int_{Q\cap \overline\Omega_0} \big| \varphi(a_Q)-  \varphi(x) \big|\,d\mu \leq \|\mu\| \,\omega(2^{-k+1})\mathop{\longrightarrow}\limits_{k\to\infty} 0\,,$$
 which completes the proof.
\end{proof}

 \begin{theorem}\label{thmexisgen}
 The minimization problem \eqref{minprobgen} admits at least one solution. 
 \end{theorem}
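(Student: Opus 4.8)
The plan is to solve the problem for the finitely supported approximations $\mu_k$ furnished by Lemma~\ref{approxmeas} and then pass to the limit, exploiting that $\|\mu_k\|=\|\mu\|$ and $\mu_k\to\mu$ weakly-$*$. For each $k$, Corollary~\ref{thmminF} provides a minimizer $u_k\in 1+H^1_0(\Omega)\cap L^\infty(\Omega)$ of $F^{\mu_k}_\varepsilon$ together with $\overrightarrow{\boldsymbol{\gamma}}_k\in\mathscr{P}(a_0,\mu_k)$ forming a minimizing pair of $E^{\mu_k}_\varepsilon$; by Theorem~\ref{existminpairs} we have in fact $\overrightarrow{\boldsymbol{\gamma}}_k\in\mathscr{P}_{\Lambda_\varepsilon}(a_0,\mu_k)$, $u_k$ is the potential of $\overrightarrow{\boldsymbol{\gamma}}_k$, and $0\le u_k\le 1$. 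Testing $F^{\mu_k}_\varepsilon$ with the constant function $1\in 1+H^1_0(\Omega)$ and using Remark~\ref{remgeodist} (with $\overline\Omega_0$ convex) gives the $k$-uniform bound
$$F^{\mu_k}_\varepsilon(u_k)\le F^{\mu_k}_\varepsilon(1)=\frac{1}{\lambda_\varepsilon}\int_{\overline\Omega_0}{\bf D}(\delta_\varepsilon+1;a_0,x)\,d\mu_k\le \frac{(1+\delta_\varepsilon)\,{\rm diam}(\overline\Omega_0)\,\|\mu\|}{\lambda_\varepsilon}=:M\,,$$
so that $\{u_k\}$ is bounded in $H^1(\Omega)$; moreover, since $\overrightarrow{\boldsymbol{\gamma}}_k\in\mathscr{P}_{\Lambda_\varepsilon}(a_0,\mu_k)$ and $\|\mu_k\|=\|\mu\|$, Proposition~\ref{Holder} bounds $\{u_k\}$ in $C^{0,\alpha}(\Omega)$ for every $\alpha\in(0,1)$, uniformly in $k$. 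By Arzel\`a-Ascoli and weak $H^1$-compactness, a (not relabeled) subsequence satisfies $u_k\to u_*$ uniformly on $\overline\Omega$ and weakly in $H^1(\Omega)$, with $u_*\in 1+H^1_0(\Omega)\cap C^{0,\alpha}(\Omega)$ and $0\le u_*\le 1$; in particular $u_*$ is admissible for \eqref{minprobgen}.

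The key step --- and the precise reason for introducing the parameter $\delta_\varepsilon$ --- is the continuous dependence of the ${\bf D}$-term on the conformal metric once the latter is bounded below. Writing $w_k:=\delta_\varepsilon+u_k^2$ and $w_*:=\delta_\varepsilon+u_*^2$, which are continuous on $\overline\Omega_0$ (hence equal their precise representatives there) and satisfy $w_k,w_*\ge\delta_\varepsilon$, I will show that
$$\sup_{x\in\overline\Omega_0}\big|{\bf D}(w_k;a_0,x)-{\bf D}(w_*;a_0,x)\big|\le \frac{C}{\delta_\varepsilon}\,\|w_k-w_*\|_{L^\infty(\overline\Omega_0)}\mathop{\longrightarrow}\limits_{k\to\infty}0\,,$$
for a constant $C$ depending only on $\overline\Omega_0$ and $\delta_\varepsilon$. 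Indeed, given $x\in\overline\Omega_0$ pick a curve $\Gamma\colon a_0\leadsto x$ in $\overline\Omega_0$ with $\int_\Gamma w_k\,d\mathcal{H}^1\le {\bf D}(w_k;a_0,x)+\|w_k-w_*\|_{L^\infty(\overline\Omega_0)}$; since $w_k\ge\delta_\varepsilon$ and ${\bf D}(w_k;a_0,x)\le(1+\delta_\varepsilon)\,{\rm diam}(\overline\Omega_0)$, this forces $\mathcal{H}^1(\Gamma)\le C\delta_\varepsilon^{-1}$, whence ${\bf D}(w_*;a_0,x)\le\int_\Gamma w_*\,d\mathcal{H}^1\le {\bf D}(w_k;a_0,x)+(1+C\delta_\varepsilon^{-1})\|w_k-w_*\|_{L^\infty(\overline\Omega_0)}$, and the reverse inequality is obtained symmetrically. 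It is precisely here that one cannot argue directly with the curve systems $\overrightarrow{\boldsymbol{\gamma}}_k$: the cardinality of ${\rm spt}\,\mu_k$ typically blows up as $k\to\infty$, so there is no limiting curve system, and one must work with the distance functions ${\bf D}(w_k;a_0,\cdot)$ instead.

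Finally I combine these ingredients. For the lower bound, the diffuse part of $F^{\mu_k}_\varepsilon$ is weakly lower semicontinuous along $u_k\rightharpoonup u_*$ in $H^1(\Omega)$, while the uniform convergence ${\bf D}(w_k;a_0,\cdot)\to{\bf D}(w_*;a_0,\cdot)$ on $\overline\Omega_0$ together with $\mu_k\to\mu$ weakly-$*$ and $\sup_k\|\mu_k\|<\infty$ yields $\int_{\overline\Omega_0}{\bf D}(w_k;a_0,\cdot)\,d\mu_k\to\int_{\overline\Omega_0}{\bf D}(w_*;a_0,\cdot)\,d\mu$; hence $F^\mu_\varepsilon(u_*)\le\liminf_k F^{\mu_k}_\varepsilon(u_k)$. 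For the upper bound, fix an arbitrary $v\in 1+H^1_0(\Omega)\cap L^\infty(\Omega)$: by Remark~\ref{remgeodist} the function $x\mapsto{\bf D}(\delta_\varepsilon+v^2;a_0,x)$ is (Lipschitz, hence) continuous on $\overline\Omega_0$ and does not depend on $k$, so $\mu_k\to\mu$ weakly-$*$ gives $F^{\mu_k}_\varepsilon(v)\to F^\mu_\varepsilon(v)$; since $u_k$ minimizes $F^{\mu_k}_\varepsilon$ we get $\liminf_k F^{\mu_k}_\varepsilon(u_k)\le\lim_k F^{\mu_k}_\varepsilon(v)=F^\mu_\varepsilon(v)$, and taking the infimum over $v$ yields $\liminf_k F^{\mu_k}_\varepsilon(u_k)\le\inf_{1+H^1_0(\Omega)\cap L^\infty(\Omega)}F^\mu_\varepsilon$. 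Chaining the two inequalities gives $F^\mu_\varepsilon(u_*)\le\inf F^\mu_\varepsilon$, so $u_*$ solves \eqref{minprobgen}. The only genuinely delicate point is the metric-continuity estimate of the second paragraph; the rest is standard compactness and lower semicontinuity.
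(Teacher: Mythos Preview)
Your proof is correct and follows essentially the same route as the paper: approximate $\mu$ by the finitely supported measures of Lemma~\ref{approxmeas}, invoke the existence theory of Section~\ref{newsection} together with the uniform $C^{0,\alpha}$ bound from Proposition~\ref{Holder}, and pass to the limit using weak-$*$ convergence of $\mu_k$ and uniform convergence of the distance functions. The only (cosmetic) difference is in the key step: where the paper shows ${\bf D}(\delta_\varepsilon+u_k^2;a_0,\cdot)\to{\bf D}(\delta_\varepsilon+u_*^2;a_0,\cdot)$ by a pointwise $\limsup/\liminf$ argument upgraded via equi-Lipschitzness, you give a direct quantitative estimate $|{\bf D}(w_k)-{\bf D}(w_*)|\le C\delta_\varepsilon^{-1}\|w_k-w_*\|_{L^\infty}$, which is a slightly cleaner packaging of the same idea (both rely on the lower bound $w\ge\delta_\varepsilon$ to control curve lengths).
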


 \begin{proof}
 We consider  the sequence of discrete measures $\{\mu_k\}_{k\in\mathbb{N}}$ provided by Lemma~\ref{approxmeas}.
 For each $k\in\mathbb{N}$, we consider a solution $u_k$ of the minimization problem
 $$\min_{u\in 1+H^1_0(\Omega)} F^{\mu_k}_\varepsilon(u) \,,$$
for some base point $a^k_0\in \overline\Omega_0$ satisfying $a^k_0\to a_0$.   Since $\|\mu_k\|$ is bounded, by Proposition~\ref{Holder}, the sequence $\{u_k\}_{k\in\mathbb{N}}$ is bounded in $C^{0,\alpha}(\Omega)$ for every $\alpha\in(0,1)$, and $0\leq u_k\leq 1$. Moreover, choosing a ($k$-independent) $C^1$-function to test the minimality of $u_k$, we infer that $F^{\mu_k}_\varepsilon(u_k)\leq C$ for some constant $C$ independent of $k$. As a consequence,  $\{u_k\}_{k\in\mathbb{N}}$ is bounded in $H^1(\Omega)$. Therefore, we can find a (not relabeled) subsequence such that $u_k\to u_*$ in $C^{0,\alpha}(\Omega)$ for every $\alpha\in(0,1)$ and  $u_k\rightharpoonup u_*$ weakly in $H^1(\Omega)$. Then, $u_*\in 1+H^1_0(\Omega)$ and 
\begin{equation}\label{limi1735}
\liminf_{k\to\infty} \varepsilon\int_{\Omega}|\nabla u_k|^2\,dx+\frac{1}{4\varepsilon}\int_{\Omega}(1-u_k)^2\,dx\geq \varepsilon\int_{\Omega}|\nabla u_*|^2\,dx+\frac{1}{4\varepsilon}\int_{\Omega}(1-u_*)^2\,dx\,.
\end{equation}
We now claim that the sequence of continuous  functions ${\bf d}_k:x\mapsto {\bf D}(\delta_\varepsilon+u_k^2;a^k_0,x)$ converges uniformly on $\Omega$ to ${\bf d}_*:x\mapsto {\bf D}(\delta_\varepsilon+u_*^2;a_0,x)$. Since $\|u_k\|_{L^\infty(\Omega)}\leq 1$, each function ${\bf d}_k$ is $(1+\delta_\eps)$-Lipschitz continuous. Hence the sequence $\{{\bf d}_k\}_{k\in\mathbb{N}}$ is uniformly equicontinuous, and it is enough to prove that ${\bf d}_k$ converges pointwise to ${\bf d}_*$. Let us then fix an arbitrary point $x\in\Omega$. For $\gamma\in\mathscr{P}(a_0,x)$, we have 
\begin{multline*}
{\bf d}_k(x)\leq  {\bf D}(\delta_\varepsilon+u_k^2;a_0,x)+(1+\delta_\eps)|a_0^k-a_0| \\
\leq  \int_{\Gamma(\gamma)}(\delta_\eps+u_k^2)\,d\mathcal{H}^1 +(1+\delta_\eps)|a_0^k-a_0|\,,
\end{multline*}
and we obtain by dominated convergence, 
$$\limsup_{k\to\infty}{\bf d}_k(x)\leq  \int_{\Gamma(\gamma)}(\delta_\eps+u_*^2)\,d\mathcal{H}^1 \,. $$
Taking the infimum over $\gamma$ shows that $\limsup_k {\bf d}_k(x)\leq {\bf d}_*(x)$. On the other hand, if $\sigma\in(0,1)$, we can find $\gamma_k\in\mathscr{P}(a^k_0,x)$ such that 
$$\int_{\Gamma(\gamma_k)}(\delta_\eps+u_k^2)\,d\mathcal{H}^1 \leq  {\bf d}_k(x)+\sigma\,.$$
In particular, $\mathcal{H}^1(\Gamma(\gamma_k))\leq \delta_\eps^{-1}( {\bf d}_k(x)+\sigma)\leq C$. Since $u_k\to u_*$ uniformly, we have $u_k^2\geq u_*^2-\sigma$ whenever $k$ is large enough. For such $k$'s, we estimate 
\begin{multline*}
 {\bf d}_k(x) \geq \int_{\Gamma(\gamma_k)}(\delta_\eps+u_*^2)\,d\mathcal{H}^1 -\big(1+\mathcal{H}^1(\Gamma(\gamma_k))\big)\sigma\\
 \geq   {\bf D}(\delta_\varepsilon+u_*^2;a^k_0,x)-C\sigma\geq {\bf d}_*(x)-(1+\delta_\eps)|a_0^k-a_0|-C\sigma\,.
 \end{multline*}
 Letting  $k\uparrow\infty$ and then $\sigma\downarrow 0$, we deduce that $\liminf_k {\bf d}_k(x) \geq {\bf d}_*(x)$, whence ${\bf d}_k(x) \to  {\bf d}_*(x)$.

Now, as a consequence of this uniform convergence, we have
\begin{equation}\label{kloklo}
\int_{\overline\Omega_0} {\bf D}(\delta_\varepsilon+u_k^2;a_0^k,x)\,d\mu_k\longrightarrow  \int_{\overline\Omega_0} {\bf D}(\delta_\varepsilon+u_*^2;a_0,x)\,d\mu\,.
\end{equation}
Gathering \eqref{limi1735} and \eqref{kloklo} leads to 
$$\liminf_{k\to\infty} F^{\mu_k}_\varepsilon(u_k)\geq F^\mu_\varepsilon(u_*)\,. $$
To conclude, we consider an arbitrary $\varphi\in 1+H^1_0(\Omega)\cap L^\infty(\Omega)$. Since 
$$\big|{\bf D}(\delta_\varepsilon+\varphi^2;a_0,x)-{\bf D}(\delta_\varepsilon+\varphi^2;a^k_0,x)\big|\leq (\delta_\eps+\|\varphi\|^2_{L^\infty(\Omega)})|a_0^k-a_0|\to 0\,,$$
we have  $\int {\bf D}(\delta_\varepsilon+\varphi^2;a^k_0,x)\,d\mu_k\to \int {\bf D}(\delta_\varepsilon+\varphi^2;a_0,x)\,d\mu$, and thus $F_\eps^{\mu_k}(\varphi)\to F_\eps^{\mu}(\varphi)$.  By minimality of $u_k$, we conclude that 
$$ F^\mu_\varepsilon(u_*)\leq \liminf_{k\to\infty} F^{\mu_k}_\varepsilon(u_k)\leq \limsup_{k\to\infty} F^{\mu_k}_\varepsilon(u_k)\leq \lim_{k\to\infty}F_\eps^{\mu_k}(\varphi)=  F_\eps^{\mu}(\varphi)\,. $$
Consequently, $u_*$ is minimizing $F^\mu_\varepsilon$, and (choosing $\varphi=u_*$) $F_\eps^{\mu_k}(u_k)\to F_\eps^{\mu}(u_*)$. For later use, we also observe that the $\liminf$ in \eqref{limi1735} now becomes a limit (in view of \eqref{kloklo}), and the inequality turns into an equality, i.e., 
$$\lim_{k\to\infty} \varepsilon\int_{\Omega}|\nabla u_k|^2\,dx+\frac{1}{4\varepsilon}\int_{\Omega}(1-u_k)^2\,dx= \varepsilon\int_{\Omega}|\nabla u_*|^2\,dx+\frac{1}{4\varepsilon}\int_{\Omega}(1-u_*)^2\,dx\,. $$ 
From this identity, it classicaly follows that $u_k\to u_* $ strongly in $H^1(\Omega)$. 
 \end{proof}

 Note that the previous proof not only produces a minimizer of $F_\eps^\mu$, but it produces a $W^{1,p}$-minimizer. Our next theorem shows that, in fact,  any minimizer shares the same regularity. 
 
 \begin{theorem}\label{mainmain}
 Any solution of the minimization problem \eqref{minprobgen} belongs to $W^{1,p}(\Omega)$ for every $p<\infty$ (and in particular to $C^{0,\alpha}(\Omega)$ for every $\alpha\in(0,1)$). 
 \end{theorem}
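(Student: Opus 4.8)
The plan is to reduce the general-measure case to the finitely-supported case via a careful diagonal/approximation argument, in the spirit of the proof of Corollary \ref{thmminF}, but now with the measure varying as well. First I would fix an arbitrary solution $u_\varepsilon$ of \eqref{minprobgen}. By the truncation argument already used in the proof of Theorem \ref{thmexisgen} (and Lemma \ref{bound1}), we know $0\le u_\varepsilon\le 1$ a.e. Then I would pick the approximating discrete measures $\{\mu_k\}$ of Lemma \ref{approxmeas}, together with base points $a_0^k\to a_0$. The key observation is that $F^{\mu_k}_\varepsilon(u_\varepsilon)\to F^\mu_\varepsilon(u_\varepsilon)$; indeed, since $u_\varepsilon$ is $(\delta_\varepsilon+1)$-Lipschitz-controlled in the $\mathbf D$-term (Remark \ref{remgeodist}) the functions $x\mapsto \mathbf D(\delta_\varepsilon+u_\varepsilon^2;a_0^k,x)$ are equi-Lipschitz and converge uniformly to $x\mapsto\mathbf D(\delta_\varepsilon+u_\varepsilon^2;a_0,x)$, and $\mu_k\overset{*}{\rightharpoonup}\mu$ with $\mathrm{spt}\,\mu_k\to\mathrm{spt}\,\mu$ in Hausdorff distance, so the integrals converge. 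Hence $F^{\mu_k}_\varepsilon(u_\varepsilon)=F^\mu_\varepsilon(u_\varepsilon)+o(1)$.

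Next, for each $k$ I would use the definition of $F^{\mu_k}_\varepsilon$ to choose $\overrightarrow{\boldsymbol\gamma}_k\in\mathscr P(a_0^k,\mu_k)$ with $E^{\mu_k}_\varepsilon(u_\varepsilon,\overrightarrow{\boldsymbol\gamma}_k)\le F^{\mu_k}_\varepsilon(u_\varepsilon)+2^{-k}$, then run the regularization of Lemma \ref{goodpairs}/Lemma \ref{geodlem} to replace $(u_\varepsilon,\overrightarrow{\boldsymbol\gamma}_k)$ by a pair $(u_k,\overrightarrow{\boldsymbol\gamma}_{\sharp,k})$ with $u_k\in(1+H^1_0(\Omega))\cap C^1(\overline\Omega)$, $0\le u_k\le 1$, $\|u_k-u_\varepsilon\|_{H^1(\Omega)}\le 2^{-k}$, $\overrightarrow{\boldsymbol\gamma}_{\sharp,k}\in\mathscr P_{\Lambda_\varepsilon}(a_0^k,\mu_k)$ (injective or constant, constant-speed curves), and $E^{\mu_k}_\varepsilon(u_k,\overrightarrow{\boldsymbol\gamma}_{\sharp,k})\le F^{\mu_k}_\varepsilon(u_\varepsilon)+C2^{-k}$. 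Passing to the potential $v_k:=u_{\overrightarrow{\boldsymbol\gamma}_{\sharp,k}}$ of $\overrightarrow{\boldsymbol\gamma}_{\sharp,k}$ only lowers the energy, so $E^{\mu_k}_\varepsilon(v_k,\overrightarrow{\boldsymbol\gamma}_{\sharp,k})\le F^{\mu_k}_\varepsilon(u_\varepsilon)+C2^{-k}\to F^\mu_\varepsilon(u_\varepsilon)$. Now, exactly as in Corollary \ref{thmminF}, writing $w_k:=u_k-v_k\in H^1_0(\Omega)$ and using the Euler--Lagrange equation \eqref{eqEL} for $v_k$ (with the bilinear form $B_{\mu_k}[\overrightarrow{\boldsymbol\gamma}_{\sharp,k}]$), the difference $E^{\mu_k}_\varepsilon(u_k,\overrightarrow{\boldsymbol\gamma}_{\sharp,k})-E^{\mu_k}_\varepsilon(v_k,\overrightarrow{\boldsymbol\gamma}_{\sharp,k})$ equals $\varepsilon\|\nabla w_k\|^2_{L^2}+\tfrac1{4\varepsilon}\|w_k\|^2_{L^2}+\tfrac1{\lambda_\varepsilon}B_{\mu_k}[\overrightarrow{\boldsymbol\gamma}_{\sharp,k}](w_k,w_k)$, which forces $\|w_k\|_{H^1(\Omega)}\le C_\varepsilon 2^{-k/2}$; combined with $\|u_k-u_\varepsilon\|_{H^1(\Omega)}\le 2^{-k}$ this gives $v_k\to u_\varepsilon$ strongly in $H^1(\Omega)$.

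To finish, I would invoke the uniform $W^{1,p}$ bound of Proposition \ref{W1p}: since $\overrightarrow{\boldsymbol\gamma}_{\sharp,k}\in\mathscr P_{\Lambda_\varepsilon}(a_0^k,\mu_k)$, $\|\mu_k\|=\|\mu\|$, and $\Lambda_\varepsilon$, $\varepsilon$, $\lambda_\varepsilon$ are all fixed, the estimate there is uniform in $k$ (the only $\overrightarrow{\boldsymbol\gamma}$-dependence is through $\mathcal H^1(\Gamma(\overrightarrow{\boldsymbol\gamma}_{\sharp,k}))$ which appears only in a $\min$ and is in any case bounded via $\mathcal H^1(\Gamma(\overrightarrow{\boldsymbol\gamma}_{\sharp,k}))\le \tfrac{\lambda_\varepsilon}{\delta_\varepsilon}E^{\mu_k}_\varepsilon(v_k,\overrightarrow{\boldsymbol\gamma}_{\sharp,k})\le C_\varepsilon$). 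Hence $\{v_k\}$ is bounded in $W^{1,p}(\Omega)$ for every $p<\infty$; since $v_k\to u_\varepsilon$ in $H^1(\Omega)$, weak-$W^{1,p}$ compactness and uniqueness of limits give $u_\varepsilon\in W^{1,p}(\Omega)$ for every $p<\infty$, and the Sobolev embedding yields $u_\varepsilon\in C^{0,\alpha}(\Omega)$ for every $\alpha\in(0,1)$. The one point requiring care — and the main technical obstacle — is verifying that Proposition \ref{W1p} is genuinely uniform with respect to the varying pair $(\overrightarrow{\boldsymbol\gamma}_{\sharp,k},\mu_k)$, in particular that the Alhfors bound $\Lambda_\varepsilon$ and the total mass $\|\mu_k\|=\|\mu\|$ are the only measure-dependent quantities entering the estimate; once this is granted, everything else is a routine repetition of the arguments in Section \ref{newsection}.
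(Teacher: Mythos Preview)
Your overall strategy is sound and genuinely different from the paper's, but there is a real gap in the step where you bound $\|w_k\|_{H^1}$. You write that the identity
\[
E^{\mu_k}_\varepsilon(u_k,\overrightarrow{\boldsymbol\gamma}_{\sharp,k})-E^{\mu_k}_\varepsilon(v_k,\overrightarrow{\boldsymbol\gamma}_{\sharp,k})
=\varepsilon\|\nabla w_k\|_{L^2}^2+\tfrac{1}{4\varepsilon}\|w_k\|_{L^2}^2+\tfrac{1}{\lambda_\varepsilon}B_{\mu_k}[\overrightarrow{\boldsymbol\gamma}_{\sharp,k}](w_k,w_k)
\]
``forces $\|w_k\|_{H^1}\le C_\varepsilon 2^{-k/2}$''. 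This would require the left-hand side to be $O(2^{-k})$, and in Corollary~\ref{thmminF} that follows from $E^\mu_\varepsilon(v_k,\overrightarrow{\boldsymbol\gamma}_{\sharp,k})\ge F^\mu_\varepsilon(v_k)\ge F^\mu_\varepsilon(u_\varepsilon)$, the last inequality being the minimality of $u_\varepsilon$ for $F^\mu_\varepsilon$. Here, however, $u_\varepsilon$ is \emph{not} a minimizer of $F^{\mu_k}_\varepsilon$, so you only get $E^{\mu_k}_\varepsilon(v_k,\overrightarrow{\boldsymbol\gamma}_{\sharp,k})\ge \min F^{\mu_k}_\varepsilon$, and hence the left-hand side is bounded above only by
\[
F^{\mu_k}_\varepsilon(u_\varepsilon)+C\,2^{-k}-\min F^{\mu_k}_\varepsilon\,.
\]
This is \emph{not} $O(2^{-k})$ in general. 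The fix is easy but must be stated: by Proposition~\ref{limitweak} (or the proof of Theorem~\ref{thmexisgen}) one has $\min F^{\mu_k}_\varepsilon\to\min F^\mu_\varepsilon=F^\mu_\varepsilon(u_\varepsilon)$, and you already observed $F^{\mu_k}_\varepsilon(u_\varepsilon)\to F^\mu_\varepsilon(u_\varepsilon)$, so the right-hand side above is $o(1)$. This gives $\|w_k\|_{H^1}=o(1)$, whence $v_k\to u_\varepsilon$ in $H^1(\Omega)$, and your conclusion via the uniform $W^{1,p}$ bound of Proposition~\ref{W1p} goes through unchanged.

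For comparison, the paper takes a different route precisely to avoid this issue: it introduces the penalized functionals $\widehat F^{\mu}_\varepsilon(u):=F^{\mu}_\varepsilon(u)+\tfrac14\|u-u_\varepsilon\|_{L^2}^2$ (and likewise $\widehat F^{\mu_k}_\varepsilon$), so that $u_\varepsilon$ becomes the \emph{unique} minimizer of $\widehat F^{\mu}_\varepsilon$. One then constructs minimizers $u_k$ of $\widehat F^{\mu_k}_\varepsilon$ with uniform $W^{1,p}$ bounds (reproving the analogues of Section~\ref{newsection} for the modified equation), and the stability argument of Theorem~\ref{thmexisgen} shows that any subsequential limit of $\{u_k\}$ minimizes $\widehat F^\mu_\varepsilon$, hence equals $u_\varepsilon$ by uniqueness. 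The penalization buys automatic identification of the limit without tracking energy defects; your approach is more direct (no need to redo the PDE estimates for a modified equation) but requires the extra input $\min F^{\mu_k}_\varepsilon\to\min F^\mu_\varepsilon$, which you should make explicit.
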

 
 \begin{proof}
 Consider $u_*$ a solution of \eqref{minprobgen}. First we claim that $0\leq u_*\leq 1$ a.e. in $\Omega$. Indeed, if this is not the case, then we consider the competitor $\bar u:=\max(\min(u_*,1),0)$. Arguing as in the proof of Lemma \ref{bound1}, we have ${\bf D}(\delta_\eps+(\bar u)^2;a_0,x)\leq {\bf D}(\delta_\eps+u_*^2;a_0,x)$ for every $x\in\Omega$. 
 Then, as in the proof of Corollary \ref{thmminF}, it leads to $F_\eps^{\mu}(\bar u)<F_\eps^{\mu}(u_*)$, in contradiction with the minimality of $u_*$. 

Now the strategy consists in introducing the modified functionals $\widehat F^{\mu}_\eps:H^1(\Omega)\cap L^\infty(\Omega)\to[0,\infty)$ defined by 
$$\widehat F^{\mu}_\eps(u):=F^{\mu}_\eps(u)+\frac{1}{4}\int_\Omega|u-u_*|^2\,dx\,. $$ 
Since $u_*$ is minimizing $F^{\mu}_\eps$, it is also the {\it unique} minimizer of $\widehat F^{\mu}_\eps$ over $1+H^1_0(\Omega)\cap L^\infty(\Omega)$. 
Then we consider the sequence of discrete measures $\{\mu_k\}_{k\in\mathbb{N}}$ provided by Lemma~\ref{approxmeas}, and the corresponding functionals $\widehat F^{\mu_k}_\eps:H^1(\Omega)\cap L^\infty(\Omega)\to[0,\infty)$ given by 
$$\widehat F^{\mu_k}_\eps(u):=F^{\mu_k}_\eps(u)+\frac{1}{4}\int_\Omega|u-u_*|^2\,dx\,, $$
with base point $a_0^k\in{\rm spt}\,\mu_k$. We aim to address the minimization problems
\begin{equation}\label{pbpertk}
\min_{u\in 1+H^1_0(\Omega)\cap L^\infty(\Omega)} \widehat  F^{\mu_k}_\eps(u)\,.
\end{equation}
We shall prove existence and regularity of minimizers for \eqref{pbpertk} following the main lines of Section \ref{newsection}. More precisely, we will prove that the $W^{1,p}$-norm of a constructed  minimizer $u_k$ of $\widehat F^{\mu_k}_\eps$ remains bounded for every $p<\infty$ independently of $k$ (and thus also the $C^{0,\alpha}$-norm for every $\alpha\in(0,1)$). Assuming that this is indeed the case, we can run the proof of Theorem \ref{thmexisgen} noticing the additional term $\|u-u_*\|^2_{L^2(\Omega)}$  is   continuous with respect to weak $H^1$-convergence. In other words, we can extract from the resulting sequence $\{u_k\}_{k\in\mathbb{N}}$, a subsequence converging strongly in $H^1(\Omega)$ (and in $C^{0,\alpha}$) to a limiting function $u_0\in 1+H^1_0(\Omega)\cap L^\infty(\Omega)$ minimizing $ \widehat F^{\mu}_\eps$.  Since $u_*$ is the unique minimizer  of  $ \widehat F^{\mu}_\eps$ over $1+H^1_0(\Omega)\cap L^\infty(\Omega)$, we have $u_0=u_*$ and $u_k\to u_*$. Finally, since $\{u_k\}_{k\in\mathbb{N}}$ remains bounded in $W^{1,p}(\Omega)$, it shows that $u_*\in W^{1,p}(\Omega)$ for every $p<\infty$. 
\vskip3pt

Now comes the analysis of problem \eqref{pbpertk}: 
\vskip3pt

\noindent{\it Step 1: Minimization with prescribed curves.} We write 
$$\mu_k=\sum^{N_k}_{i=0}\beta^k_{i}\delta_{a_i^k}\,,$$
with $\beta_i^k>0$. 
For $\overrightarrow{\boldsymbol{\gamma}}\in \mathscr{P}(a_0^k,\mu_k)$, we consider the  functional $\widehat E^{\mu_k}_\varepsilon(\cdot,\overrightarrow{\boldsymbol{\gamma}}):H^1(\Omega)\to [0,+\infty]$ defined by 
$$\widehat E^{\mu_k}_\varepsilon(u,\overrightarrow{\boldsymbol{\gamma}}):=E^{\mu_k}_\varepsilon(u,\overrightarrow{\boldsymbol{\gamma}})+\frac{1}{4}\int_\Omega|u-u_*|^2\,dx\,, $$
where $E^{\mu_k}_\varepsilon(u,\overrightarrow{\boldsymbol{\gamma}})$ is given by \eqref{defEmu}. Then,  
\begin{equation}\label{samsoir1740}
\widehat F^{\mu_k}_\eps(u)=\inf_{\overrightarrow{\boldsymbol{\gamma}}\in \mathscr{P}(a_0^k,\mu_k)} \widehat E^{\mu_k}_\varepsilon(u,\overrightarrow{\boldsymbol{\gamma}})\qquad\forall u\in H^1(\Omega)\cap L^\infty(\Omega)\,.
\end{equation}
Let us now fix  $\overrightarrow{\boldsymbol{\gamma}}\in \mathscr{P}_\Lambda(a_0,\mu_k)$ for some $\Lambda\ge 2$. By Lemma \ref{lemfond}, the minimization problem 
$$\min_{u\in 1+H^1_0(\Omega)} \widehat E^{\mu_k}_\varepsilon(u,\overrightarrow{\boldsymbol{\gamma}}) $$ 
admits a unique solution  $\widehat  u_{\overrightarrow{\boldsymbol{\gamma}}}$ solving 
$$\begin{cases}
\displaystyle -\eps^2\Delta \widehat u_{\overrightarrow{\boldsymbol{\gamma}}}= \frac{1}{4} (1- \widetilde u_{\overrightarrow{\boldsymbol{\gamma}}}) +\frac{\eps}{4}(u_*- \widehat u_{\overrightarrow{\boldsymbol{\gamma}}})-\frac{\varepsilon}{\lambda_\varepsilon}B_\mu[\overrightarrow{\boldsymbol{\gamma}}](\widehat u_{\overrightarrow{\boldsymbol{\gamma}}},\cdot) &\text{in $H^{-1}(\Omega)$}\,,\\[8pt]
 \widehat u_{\overrightarrow{\boldsymbol{\gamma}}}= 1 & \text{on $\partial\Omega$}\,.
\end{cases}$$
In addition, since $0\leq u_*\leq 1$, the truncation argument in the proof of Lemma \ref{bound1} shows that $0\leq \widehat u_{\overrightarrow{\boldsymbol{\gamma}}}\leq 1$ a.e. in $\Omega$. As a consequence, $|u_*- \widehat u_{\overrightarrow{\boldsymbol{\gamma}}}|\leq 1$ a.e. in $\Omega$. By elliptic regularity, we then infer that 
 $ \widehat u_{\overrightarrow{\boldsymbol{\gamma}}} \in C^{1,\alpha}_{\rm loc}\big(\overline\Omega \setminus \Gamma(\overrightarrow{\boldsymbol{\gamma}})\big)$ for every $\alpha\in(0,1)$.
 
Considering the function $\widehat v:=1- \widehat u_{\overrightarrow{\boldsymbol{\gamma}}} $, we notice that 
 $$ \begin{cases} 
 -4\eps^2\Delta \widehat v+\widehat v\leq \eps & \text{in $\Omega\setminus \Gamma(\overrightarrow{\boldsymbol{\gamma}})$}\,,\\
 0\leq \widehat v\leq 1 & \text{in $\Omega$}\,. 
\end{cases}$$
Then a straightforward modification of Lemma  \ref{comp} shows that 
$$0\leq 1- \widehat u_{\overrightarrow{\boldsymbol{\gamma}}}(x_0)\leq \eps +\exp\left(-\frac{3\,{\rm dist}(x_0,\Gamma(\overrightarrow{\boldsymbol{\gamma}}))}{32\varepsilon}\right)$$ 
at every $x_0\in \Omega\setminus \Gamma(\overrightarrow{\boldsymbol{\gamma}})$ satisfying ${\rm dist}(x_0,\Gamma(\overrightarrow{\boldsymbol{\gamma}}))\geq 12\varepsilon$. As in Lemma \ref{estigradhessloin}, this leads to the gradient estimate 
\begin{equation}\label{pwtgradpert}
\big|\nabla \widehat u_{\overrightarrow{\boldsymbol{\gamma}}}(x_0)\big| \leq C_{\boldsymbol{\eta}_0}\left( 1+\frac{1}{\varepsilon}  \exp\left(-\frac{{\rm dist}(x_0,\Gamma(\overrightarrow{\boldsymbol{\gamma}}))}{32\varepsilon}\right)\right)
\end{equation}
at every $x_0\in \overline\Omega\setminus \Gamma(\overrightarrow{\boldsymbol{\gamma}})$ satisfying ${\rm dist}(x_0,\Gamma(\overrightarrow{\boldsymbol{\gamma}}))\geq 13\varepsilon$ (with $\boldsymbol{\eta}_0$ given by Lemma \ref{estigradhessloin}).

Since $\|u_*-\widehat u_{\overrightarrow{\boldsymbol{\gamma}}}\|_{L^\infty(\Omega)}\leq 1$, we can reproduce the proof of Proposition \ref{W1p} with minor modifications  to prove that 
$\widehat u_{\overrightarrow{\boldsymbol{\gamma}}}\in W^{1,p}(\Omega)$ for every $2<p<\infty$ together with the estimate
$$\|\nabla \widehat u_{\overrightarrow{\boldsymbol{\gamma}}}\|_{L^p(V_{32\eps|\log\eps|})} \\
 \leq C_{p,\boldsymbol{\eta}_0}\left(\frac{|\log\eps|}{\varepsilon}+\frac{\Lambda\|\mu_k\|}{\lambda_\varepsilon\varepsilon}\right)\,,$$
where $V_{32\eps|\log\eps|}:=\{x\in\Omega:{\rm dist}(x,\Gamma(\overrightarrow{\boldsymbol{\gamma}}))<32\eps|\log\eps|\}$. On the other hand, \eqref{pwtgradpert} yields the estimate  $|\nabla \widehat u_{\overrightarrow{\boldsymbol{\gamma}}}|\leq C_{\boldsymbol{\eta}_0}$ on $\Omega\setminus V_{32\eps|\log\eps|}$. Therefore, 
$$\|\nabla \widehat u_{\overrightarrow{\boldsymbol{\gamma}}}\|_{L^p(\Omega)} \\
 \leq C_{p,\boldsymbol{\eta}_0}\left(\frac{|\log\eps|}{\varepsilon}+\frac{\Lambda\|\mu_k\|}{\lambda_\varepsilon\varepsilon}\right)\quad\text{for  $2<p<\infty$}\,. $$ 
 Since $\|\mu_k\|$ is bounded, we have thus proved that $\|\widehat u_{\overrightarrow{\boldsymbol{\gamma}}}\|_{W^{1,p}(\Omega)}$ is bounded independently of $k$ for each $p<\infty$. 
 \vskip3pt

\noindent{\it Step 2: Existence of minimizing pairs.} Define $\Lambda_\eps$ as in \eqref{Lambdaeps}. Then we  notice that Lemma \ref{replace}, Lemma \ref{geodlem}, and Lemma \ref{goodpairs} hold with $\widehat E^{\mu_k}_\varepsilon$ in place of  $E^{\mu_k}_\varepsilon$. Hence we can follow the proof of Theorem \ref{existminpairs} to find  
$\overrightarrow{\boldsymbol{\gamma}}_k\in \mathscr{P}_{\Lambda_\eps}(a_0^k,\mu_k)$ such that the pair $(\widehat u_{\overrightarrow{\boldsymbol{\gamma}}_k},\overrightarrow{\boldsymbol{\gamma}}_k)$ is minimizing $\widehat E^{\mu_k}_\varepsilon$ over $(1+H^1_0(\Omega))\times \mathscr{P}(a_0^k,\mu_k)$. 

 \vskip3pt

\noindent{\it Step 3: Conclusion.} Set $u_k:=\widehat  u_{\overrightarrow{\boldsymbol{\gamma}}_k}$. Since $u_k\in L^\infty(\Omega)$, we infer from \eqref{samsoir1740} that $\widehat F^{\mu_k}_\eps(u_k)= \widehat E^{\mu_k}_\varepsilon(u_k,\overrightarrow{\boldsymbol{\gamma}}_k)$, and thus $u_k$ is minimizing $\widehat F^{\mu_k}_\eps$ over 
$1+H^1_0(\Omega)\cap L^\infty(\Omega)$.  Finally, it follows from Step 1 that $\|u_k\|_{W^{1,p}(\Omega)}$ is bounded independently of $k$ for every $p<\infty$.   
\end{proof}
 
 \begin{remark}
 The proof of Theorem \ref{mainmain} (together with the results in Subsection \ref{subsecprescrcurv}) shows that any minimizer $u_\eps$ of $F_\eps^\mu$ over $1+H^1_0(\Omega)\cap L^\infty(\Omega)$ satisfies the following estimates
 $$ \|\nabla u_\eps\|_{L^p(\Omega)}\leq C_{p,\boldsymbol{\eta}_0}\left(\frac{|\log\eps|}{\varepsilon}+\frac{\|\mu\|}{\delta_\eps\lambda_\varepsilon\varepsilon}\right)\qquad\forall p\in(2,\infty)\,,$$
 and 
 $$\|u_\eps\|_{C^{0,\alpha}(\Omega)}\leq C_{\alpha,\boldsymbol{\eta}_0}\frac{1+\|\mu\|\delta_\eps^{-1}\lambda^{-1}_\varepsilon}{\varepsilon^\alpha} \qquad\forall \alpha\in(0,1)\,, $$
 for some constants $C_{p,\boldsymbol{\eta}_0}$ and $C_{\alpha,\boldsymbol{\eta}_0}$ depending only on $p$, $\alpha$, and $\boldsymbol{\eta}_0$ (given in  Lemma \ref{estigradhessloin}). Even if those estimates are not optimal with respect to $\eps$ (but nearly), they only depends on the total mass of $\mu$, and not on the internal structure of $\mu$. 
 \end{remark}
 
  In view of the uniform estimates above, one can reproduce (verbatim) the proof of Theorem~\ref{thmexisgen} to show the following stability result.
 
 \begin{proposition}\label{limitweak}
 Let $\{\mu_k\}_{k\in\mathbb{N}}$ be a sequence of finite measures supported on $\overline\Omega_0$, and $\{a_0^k\}_{k\in\mathbb{N}}\subset \overline\Omega_0$.  Assume that $\mu_k\mathop{\rightharpoonup}\limits^*\mu$ as measures and $a_0^k\to a_0$. If $u_k$ is a minimizer of $F_\eps^{\mu_k}$ with base point $a_0^k$ over $1+H^1_0(\Omega)\cap L^\infty(\Omega)$, then the sequence $\{u_k\}_{k\in\mathbb{N}}$ admits a (not relabeled) subsequence converging strongly in $H^1(\Omega)$ and  in $C^{0,\alpha}(\Omega)$ for every $\alpha\in(0,1)$ to a minimizer $u_*$ of $F_\eps^{\mu}$ with base point $a_0$ over $1+H^1_0(\Omega)\cap L^\infty(\Omega)$. In addition, $F_\eps^{\mu_k}(u_k)\to F_\eps^{\mu}(u_*)$. 
 \end{proposition}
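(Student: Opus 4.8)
The plan is to reproduce, nearly verbatim, the proof of Theorem~\ref{thmexisgen}, replacing the dyadic discretizations by the given weakly-$*$ convergent sequence and allowing the base point to move. The first step is to record the uniform bounds. Since $\mu_k\mathop{\rightharpoonup}\limits^*\mu$, the uniform boundedness principle gives $\sup_k\|\mu_k\|<\infty$. Testing the minimality of $u_k$ against a fixed $\varphi\in(1+H^1_0(\Omega))\cap C^1(\overline\Omega)$ yields $F^{\mu_k}_\eps(u_k)\leq C$ with $C$ independent of $k$, hence $\{u_k\}_{k\in\mathbb{N}}$ is bounded in $H^1(\Omega)$; moreover, by the a priori estimates established in the proof of Theorem~\ref{mainmain} (and recalled in the subsequent remark), which depend on $\mu_k$ only through $\|\mu_k\|$, the sequence $\{u_k\}_{k\in\mathbb{N}}$ is bounded in $W^{1,p}(\Omega)$ for every $p<\infty$, in particular in $C^{0,\alpha}(\Omega)$ for every $\alpha\in(0,1)$, and $0\leq u_k\leq1$. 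We may thus extract a (not relabeled) subsequence with $u_k\to u_*$ in $C^{0,\alpha}(\Omega)$ for every $\alpha\in(0,1)$ and $u_k\rightharpoonup u_*$ weakly in $H^1(\Omega)$, with $u_*\in 1+H^1_0(\Omega)$ and $0\leq u_*\leq1$.

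The heart of the argument is the convergence of the distance terms, which is precisely the computation carried out in the proof of Theorem~\ref{thmexisgen}. Setting ${\bf d}_k(x):={\bf D}(\delta_\eps+u_k^2;a_0^k,x)$ and ${\bf d}_*(x):={\bf D}(\delta_\eps+u_*^2;a_0,x)$, each ${\bf d}_k$ is $(1+\delta_\eps)$-Lipschitz because $\|u_k\|_{L^\infty(\Omega)}\leq1$; pointwise convergence ${\bf d}_k\to{\bf d}_*$ follows from the uniform convergence $u_k\to u_*$ together with the a priori length bound $\mathcal{H}^1(\Gamma(\gamma_k))\leq\delta_\eps^{-1}({\bf d}_k(x)+1)$ on near-minimizing curves and the fact that $|a_0^k-a_0|\to0$, exactly as in Theorem~\ref{thmexisgen}. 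By Arzel\`a--Ascoli, ${\bf d}_k\to{\bf d}_*$ uniformly on $\overline\Omega_0$. Writing
$$\int_{\overline\Omega_0}{\bf d}_k\,d\mu_k-\int_{\overline\Omega_0}{\bf d}_*\,d\mu=\int_{\overline\Omega_0}({\bf d}_k-{\bf d}_*)\,d\mu_k+\left(\int_{\overline\Omega_0}{\bf d}_*\,d\mu_k-\int_{\overline\Omega_0}{\bf d}_*\,d\mu\right),$$
the first term is bounded by $\|{\bf d}_k-{\bf d}_*\|_{L^\infty(\overline\Omega_0)}\|\mu_k\|\to0$, and the second tends to $0$ because ${\bf d}_*\in C^0(\overline\Omega_0)$ and $\mu_k\mathop{\rightharpoonup}\limits^*\mu$. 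Together with the weak $H^1$-lower semicontinuity of the diffuse part, this gives $\liminf_k F^{\mu_k}_\eps(u_k)\geq F^\mu_\eps(u_*)$.

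For the matching upper bound, fix an arbitrary $\varphi\in(1+H^1_0(\Omega))\cap L^\infty(\Omega)$. The estimate $\big|{\bf D}(\delta_\eps+\varphi^2;a_0,x)-{\bf D}(\delta_\eps+\varphi^2;a_0^k,x)\big|\leq(\delta_\eps+\|\varphi\|^2_{L^\infty(\Omega)})|a_0^k-a_0|$, combined with $\mu_k\mathop{\rightharpoonup}\limits^*\mu$ (splitting as above), yields $F^{\mu_k}_\eps(\varphi)\to F^\mu_\eps(\varphi)$. By minimality of $u_k$ we obtain $F^\mu_\eps(u_*)\leq\liminf_k F^{\mu_k}_\eps(u_k)\leq\limsup_k F^{\mu_k}_\eps(u_k)\leq\lim_k F^{\mu_k}_\eps(\varphi)=F^\mu_\eps(\varphi)$, so $u_*$ minimizes $F^\mu_\eps$ over $1+H^1_0(\Omega)\cap L^\infty(\Omega)$ with base point $a_0$; choosing $\varphi=u_*$ gives $F^{\mu_k}_\eps(u_k)\to F^\mu_\eps(u_*)$. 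Finally, since the distance terms converge, the diffuse parts converge as well, so the $\liminf$ in the lower semicontinuity step is a limit and equality holds; hence $\|\nabla u_k\|_{L^2(\Omega)}\to\|\nabla u_*\|_{L^2(\Omega)}$, which upgrades the weak $H^1$-convergence to strong convergence in $H^1(\Omega)$ (the $C^{0,\alpha}$-convergence being already in hand). The only point that is not entirely routine is the uniform convergence of the geodesic distances with moving base point; but this is exactly the estimate already proved inside Theorem~\ref{thmexisgen}, so there is no genuinely new difficulty here — the proposition merely packages that argument with the $k$-uniform bounds coming from Theorem~\ref{mainmain}.
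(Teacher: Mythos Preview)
Your proof is correct and follows essentially the same approach as the paper, which in fact says nothing more than ``reproduce (verbatim) the proof of Theorem~\ref{thmexisgen}'' together with the uniform $W^{1,p}$-estimates from the remark after Theorem~\ref{mainmain}. You have spelled out the argument with the appropriate care, in particular the splitting of $\int{\bf d}_k\,d\mu_k-\int{\bf d}_*\,d\mu$ and the use of the estimates that depend on $\mu_k$ only through $\|\mu_k\|$.
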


 \subsection{Application to the average distance and optimal compliance problems}\label{AverdistCompsec}
 
In this subsection, we briefly review and complement two applications suggested in \cite{BLS}:  the average distance problem and the optimal compliance problem. 
\vskip3pt

\noindent{\it (1) The average distance problem.} Given a nonnegative density $f\in L^1(\Omega_0)$,  it consists in finding a connected compact set $K_\sharp\subset \overline\Omega_0$ minimizing the functional 
$${\bf AVD}(K):=\int_{\Omega_0}{\rm dist}(x,K)f(x)\,dx+\mathcal{H}^1(K) $$
among all connected and compact subsets $K$ of $\overline\Omega_0$. 
\vskip3pt

\noindent{\it (2) The optimal compliance problem.} Given a nonnegative $f\in L^2(\Omega_0)$,  it consists in finding a connected compact set $K_\sharp\subset \overline\Omega_0$ minimizing the functional 
$${\bf OPC}(K):=\frac{1}{2}\int_{\Omega_0}  f u_K\,dx +\mathcal{H}^1(K) $$
among all connected and compact subsets $K$ of $\overline\Omega_0$ of positive $\mathcal{H}^1$-measure, where $u_K\in H^1(\Omega_0)$ denotes the unique solution of the minimization problem
$$\min\Big\{\frac{1}{2}\int_{\Omega_0}|\nabla u|^2\,dx-\int_{\Omega_0}fu\,dx: u\in H^1(\Omega_0)\,,\;u=0\text{ on $K$}\Big\}\,. $$
\vskip3pt
 
\noindent{\it Reformulating problems (1) and (2).} The starting point in \cite{BLS} is a suitable reformulation of the average distance and optimal compliance problems by a duality argument. To describe in detail these reformulations, we need first to introduced the functional spaces involved. We fix a {\sl base point} $a_0\in\overline\Omega_0$. Setting $\mathscr{M}(\overline\Omega_0)$, respectively $\mathscr{M}(\overline\Omega_0;\mathbb{R}^2)$, the space of (finite) $\R$-valued, respectively  $\R^2$-valued, measures on $\R^2$ supported on $\overline\Omega_0$, we consider the following families of (generalized) vector fields
$$\mathscr{V}_{\rm avd}(\Omega_0):=\Big\{v\in \mathscr{M}(\overline\Omega_0;\mathbb{R}^2): {\rm div}\,v\in\mathscr{M}(\overline\Omega_0) \ {\rm and}\  {\rm div}\,v(\overline\Omega_0)=0\Big\}\,, $$
and
$$\mathscr{V}_{\rm opc}(\Omega_0):=\Big\{v\in L^2(\Omega_0;\mathbb{R}^2):{\rm div}(\chi_{\Omega_0}v) \in\mathscr{M}(\overline\Omega_0)  \ {\rm and}\  {\rm div}(\chi_{\Omega_0}v)(\overline\Omega_0)=0 \Big\}\,. $$
For such a vector field $v$, we associate the (finite) nonnegative measure
$$\mu(v):=\begin{cases} 
|{\rm div}\,v+\chi_{\Omega_0}f| & \text{if $v\in \mathscr{V}_{\rm avd}(\Omega_0)$}\,,\\
|{\rm div}(\chi_{\Omega_0}v)+\chi_{\Omega_0}f| & \text{if $v\in\mathscr{V}_{\rm opc}(\Omega_0)$}\,.
\end{cases}$$
We define the pointed functionals $\mathscr{F}_{\rm avd}:\overline\Omega_0\times\mathscr{M}(\overline\Omega_0;\R^2)\to[0,\infty]$ and $\mathscr{F}_{\rm opc}:\overline\Omega_0\times L^2(\Omega_0;\R^2)\to[0,\infty]$ by 
$$\mathscr{F}_{\rm avd}(a_0,v):=\begin{cases}
\|v\|+\|{\rm div}\,v\|+ \mathscr{S}\big(\{a_0\}\cup{\rm spt}\,\mu(v)\big) & \text{if $v\in \mathscr{V}_{\rm avd}(\Omega_0)$}\,,\\
+\infty & \text{otherwise}\,,
\end{cases}$$
and 
$$\mathscr{F}_{\rm opc}(a_0,v):=\begin{cases}
\displaystyle \frac{1}{2}\int_{\Omega_0}|v|^2\,dx+\|{\rm div}\,v\|+ \mathscr{S}\big(\{a_0\}\cup{\rm spt}\,\mu(v)\big) & \text{if $v\in \mathscr{V}_{\rm opc}(\Omega_0)$}\,,\\[5pt] 
+\infty & \text{otherwise}\,,
\end{cases}$$ 
where  $\|v\|$ and $\|{\rm div}\,v\|$ denote the total variations of $v$ and ${\rm div}\,v$, and 
$$\mathscr{S}\big(\{a_0\}\cup{\rm spt}\,\mu(v)\big):=\inf\Big\{\mathcal{H}^1(K): K\subset\overline\Omega_0 \text{ compact connected, } K\supset \{a_0\}\cup{\rm spt}\,\mu(v)\Big\} $$
(the infimum being infinite if the class of competitors is empty). 

Following \cite[proof of Proposition 5.6]{BLS}, the variational problems 
$$\min_{a_0\in\overline\Omega_0}\Big(\min_{\mathscr{V}_{\rm avd}(\Omega_0)}\mathscr{F}_{\rm avd}(v,a_0)\Big)  \quad \text{and} \quad \min_{a_0\in\overline\Omega_0}\Big(\min_{\mathscr{V}_{\rm opc}(\Omega_0)}\mathscr{F}_{\rm opc}(v,a_0) \Big)$$
admit at least one solution $(a^\sharp_0,v^\sharp_{\rm avd})$ and $(a^\sharp_0,v^\sharp_{\rm opc})$, respectively. According to \cite[Section 5.1]{BLS}, their resolution is equivalent to problems (1) and (2), respectively\footnote{In the original formulation of \cite{BLS}, one requires $a_0\in{\rm spt}\,\mu(v)$ in the definition $\mathscr{F}_{\rm avd}(a_0,v)$ or $\mathscr{F}_{\rm opc}(a_0,v)$. A quick inspection of \cite[Section 5.1]{BLS} reveals that this condition can be dropped when considering $\mathscr{S}\big(\{a_0\}\cup{\rm spt}\,\mu(v)\big)$ instead of $\mathscr{S}\big({\rm spt}\,\mu(v)\big)$.}. As our purpose is not focused on this equivalent formulation, we only indicate the following implication: if $K_{\rm avd}^\sharp$ and  $K_{\rm opc}^\sharp$ are compact connected subsets of $\overline\Omega_0$ satisfying   
\begin{equation}\label{avion1}
\mathcal{H}^1(K_{\rm avd}^\sharp)=\mathscr{S}\big(\{a_0^\sharp\}\cup{\rm spt}\,\mu(v_{\rm avd}^\sharp)\big)\quad\text{and}\quad \mathcal{H}^1(K_{\rm opc}^\sharp)=\mathscr{S}\big(\{a_0^\sharp\}\cup{\rm spt}\,\mu(v_{\rm opc}^\sharp)\big)\,,
\end{equation}
then, 
\begin{equation}\label{avion2}
{\bf AVD}(K_{\rm avd}^\sharp)=\min\,{\bf AVD}\quad \text{and} \quad {\bf OPC}(K_{\rm opc}^\sharp)=\min\,{\bf OPC}\,. 
\end{equation}
In other words, $K_{\rm avd}^\sharp$ and  $K_{\rm opc}^\sharp$ solve problem (1) and problem (2) respectively. 
\vskip3pt

\noindent{\it The phase field approximation.} The phase field approximation introduced in \cite{BLS} to solve problem (1) or (2) consists in replacing the term $\mathscr{S}\big(\{a_0\}\cup{\rm spt}\,\mu(\cdot)\big)$ in $\mathscr{F}_{\rm avd}(\cdot,a_0)$ or  $\mathscr{F}_{\rm opc}(\cdot,a_0)$ by the functional $\widetilde F_\eps^{\mu(\cdot)}$ defined in \eqref{defftild}. As explained in the introduction (see also \cite[Section 5.4]{BLS}), the possible lack of lower semicontinuity of $\widetilde F_\eps^{\mu(\cdot)}$ prevents one to obtain existence of minimizers for the resulting phase field functionals. 

Here we follow the approach of \cite{BLS} using the functional $F_\eps^{\mu(\cdot)}$ instead of $\widetilde F_\eps^{\mu(\cdot)}$. More precisely, we consider the functionals $\mathscr{F}^\eps_{\rm avd}: \overline\Omega_0\times\mathscr{M}(\overline\Omega_0;\R^2)\times \big(1+H_0^1(\Omega)\cap L^\infty(\Omega)\big)\to [0,\infty]$ and 
$\mathscr{F}^\eps_{\rm opc}: \overline\Omega_0\times L^2(\Omega_0;\R^2)\times \big(1+H_0^1(\Omega)\cap L^\infty(\Omega)\big)\to [0,\infty]$ given by 
\begin{equation}\label{defpfavd}
 \mathscr{F}^\eps_{\rm avd}(a_0,v,u):=\begin{cases}
\|v\|+\|{\rm div}\,v\|+F^{\mu(v)}_\eps(u) & \text{if $v\in \mathscr{V}_{\rm avd}(\Omega_0)$}\,,\\
+\infty & \text{otherwise}\,,
\end{cases}
\end{equation}
and 
\begin{equation}\label{defpfopc}
 \mathscr{F}^\eps_{\rm opc}(a_0,v,u):=\begin{cases}
\displaystyle \frac{1}{2}\int_{\Omega_0}|v|^2\,dx+\|{\rm div}\,v\|+F^{\mu(v)}_\eps(u) & \text{if $v\in \mathscr{V}_{\rm opc}(\Omega_0)$}\,,\\
+\infty & \text{otherwise}\,,
\end{cases}
\end{equation}
where $a_0$ is the base point in $F^{\mu(v)}_\eps$. As a consequence of Theorem \ref{thmexisgen} and Proposition \ref{limitweak}, we have the following existence result of minimizers. Their convergence  as $\eps\to 0$ towards minimizers of $\mathscr{F}_{\rm avd}$ or $\mathscr{F}_{\rm opc}$ (essentially proved in \cite{BLS}) shall be discussed for completeness in Subsection~\ref{asymptavdopc}. 

\begin{theorem}
The functionals $ \mathscr{F}^\eps_{\rm avd}$ and $\mathscr{F}^\eps_{\rm opc}$ admit at least one minimizer. 
\end{theorem}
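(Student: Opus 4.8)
The plan is to apply the direct method in the triple $(a_0,v,u)$, using Theorem~\ref{thmexisgen} to take care of the variable $u$ and Proposition~\ref{limitweak} to pass to the limit in the $F$-term. We shall treat $\mathscr{F}^\eps_{\rm avd}$ and $\mathscr{F}^\eps_{\rm opc}$ in parallel, the only difference being that along a minimizing sequence $v_k$ will be extracted weakly-$*$ in $\mathscr{M}(\overline\Omega_0;\R^2)$ in the first case, and weakly in $L^2(\Omega_0;\R^2)$ in the second.

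First we would fix a minimizing sequence $(a_0^k,v_k,u_k)$ (it is nonempty, e.g.\ the choice $v=0$ produces the finite measure $\mu(0)=\chi_{\Omega_0}f\,\mathcal{L}^2$, for which $F^{\mu(0)}_\eps$ has finite infimum), and, invoking Theorem~\ref{thmexisgen}, we may assume that each $u_k$ is a minimizer of $F^{\mu(v_k)}_\eps$ over $1+H^1_0(\Omega)\cap L^\infty(\Omega)$ with base point $a_0^k$ (replacing $u_k$ by such a minimizer only lowers the energy). Along the minimizing sequence, $\|v_k\|+\|{\rm div}\,v_k\|$ (resp.\ $\tfrac12\int_{\Omega_0}|v_k|^2\,dx+\|{\rm div}(\chi_{\Omega_0}v_k)\|$) stays bounded, hence so does $\|\mu(v_k)\|$. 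Extracting a subsequence, we obtain $a_0^k\to a_0^*\in\overline\Omega_0$, $v_k\mathop{\rightharpoonup}\limits^* v_*$ (resp.\ $v_k\rightharpoonup v_*$ in $L^2$), ${\rm div}\,v_k\mathop{\rightharpoonup}\limits^*{\rm div}\,v_*\in\mathscr{M}(\overline\Omega_0)$ (resp.\ its opc analogue, using that the divergence is continuous for these convergences and that $\{{\rm div}\,v_k\}$ is bounded in $\mathscr{M}(\overline\Omega_0)$), and $\mu(v_k)\mathop{\rightharpoonup}\limits^*\sigma$ for some finite nonnegative measure $\sigma$ supported on $\overline\Omega_0$. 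Testing the divergence against a cutoff $\equiv 1$ near $\overline\Omega_0$ shows that the zero-mass constraint passes to the limit, so $v_*\in\mathscr{V}_{\rm avd}(\Omega_0)$ (resp.\ $v_*\in\mathscr{V}_{\rm opc}(\Omega_0)$). The crucial observation is that, writing $\nu_k:={\rm div}\,v_k+\chi_{\Omega_0}f\,\mathcal{L}^2$ (resp.\ its opc analogue) so that $\mu(v_k)=|\nu_k|$ and $\nu_k\mathop{\rightharpoonup}\limits^*\nu_*:={\rm div}\,v_*+\chi_{\Omega_0}f\,\mathcal{L}^2$, one has
\begin{equation*}
\sigma\geq|\nu_*|=\mu(v_*)\qquad\text{as measures,}
\end{equation*}
which follows from the duality identity $\int\phi\,d|\nu_*|=\sup\{\int\psi\,d\nu_*:\psi\in C^0(\overline\Omega_0),\ |\psi|\leq\phi\}$ (valid for $\phi\geq0$ continuous) combined with $\int\psi\,d\nu_k\to\int\psi\,d\nu_*$ and $\int\phi\,d|\nu_k|\to\int\phi\,d\sigma$.

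We can then conclude. By Proposition~\ref{limitweak} applied to the measures $\mu(v_k)\mathop{\rightharpoonup}\limits^*\sigma$ and the base points $a_0^k\to a_0^*$, we have $F^{\mu(v_k)}_\eps(u_k)\to F^{\sigma}_\eps(u_\sigma)$ where $u_\sigma$ minimizes $F^{\sigma}_\eps$ with base point $a_0^*$. Since $F^\mu_\eps$ is nondecreasing in $\mu$ (the ${\bf D}$-term has a nonnegative integrand) and $\sigma\geq\mu(v_*)$, this gives
\begin{equation*}
F^{\sigma}_\eps(u_\sigma)=\min_{1+H^1_0(\Omega)\cap L^\infty(\Omega)}F^{\sigma}_\eps\;\geq\;\min_{1+H^1_0(\Omega)\cap L^\infty(\Omega)}F^{\mu(v_*)}_\eps=F^{\mu(v_*)}_\eps(u_{v_*})\,,
\end{equation*}
for some minimizer $u_{v_*}$ of $F^{\mu(v_*)}_\eps$ with base point $a_0^*$ (which exists by Theorem~\ref{thmexisgen}). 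Together with the lower semicontinuity of $\|v_k\|$, $\|{\rm div}\,v_k\|$ (resp.\ $\tfrac12\int_{\Omega_0}|v_k|^2$, $\|{\rm div}(\chi_{\Omega_0}v_k)\|$) for the relevant convergences, we obtain
\begin{equation*}
\inf\mathscr{F}^\eps_{\rm avd}=\lim_{k\to\infty}\mathscr{F}^\eps_{\rm avd}(a_0^k,v_k,u_k)\;\geq\;\mathscr{F}^\eps_{\rm avd}(a_0^*,v_*,u_{v_*})\;\geq\;\inf\mathscr{F}^\eps_{\rm avd}\,,
\end{equation*}
so $(a_0^*,v_*,u_{v_*})$ is a minimizer; the argument for $\mathscr{F}^\eps_{\rm opc}$ is identical.

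The main obstacle is exactly the step isolated above: the map $v\mapsto\mu(v)$, hence the $F$-term, is \emph{not} continuous for the weak topologies used on $v$, since only the signed measures $\nu_k$ converge and not their total variations; it is merely weakly-$*$ lower semicontinuous, and what rescues the argument is that this is precisely the useful direction of the inequality, owing to the monotonicity of $F^\mu_\eps$ in $\mu$. All the remaining ingredients (boundedness and compactness of minimizing sequences, semicontinuity of the $v$-terms, identification of the limit field in $\mathscr{V}_{\rm avd}$ or $\mathscr{V}_{\rm opc}$) are routine once the existence and stability results of Section~\ref{genmeassec} are in hand.
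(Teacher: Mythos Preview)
Your argument is correct and follows the same overall strategy as the paper: replace $u_k$ by a minimizer of $F^{\mu(v_k)}_\eps$ via Theorem~\ref{thmexisgen}, extract a weakly convergent subsequence in $v$ and $a_0$, invoke Proposition~\ref{limitweak} for the $F$-term, and conclude by lower semicontinuity of the $v$-terms.

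The one genuine difference is your treatment of $\mu(v_k)$. The paper simply asserts $\mu(v_k)\mathop{\rightharpoonup}\limits^*\mu(v_\eps)$ and applies Proposition~\ref{limitweak} directly. As you correctly point out, this is not automatic: from $\nu_k:={\rm div}\,v_k+\chi_{\Omega_0}f\mathop{\rightharpoonup}\limits^*\nu_*$ one only gets (after extraction) $|\nu_k|\mathop{\rightharpoonup}\limits^*\sigma$ with $\sigma\geq|\nu_*|=\mu(v_*)$, not equality. Your fix---apply Proposition~\ref{limitweak} with limit measure $\sigma$, then use the monotonicity of $\mu\mapsto F^\mu_\eps(u)$ (the ${\bf D}$-integrand is nonnegative) to deduce $\min F^\sigma_\eps\geq\min F^{\mu(v_*)}_\eps$---is the clean way to close this step. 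In effect you have made the paper's argument rigorous at a point where it is slightly elliptic; the rest of your proof matches the paper's line by line.
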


\begin{proof}
First notice that, for $a\in\overline\Omega_0$, the competitor $(a,0,1)$ has a finite energy, so that the infimum of $ \mathscr{F}^\eps_{\rm avd}$ and $\mathscr{F}^\eps_{\rm opc}$ are finite.  Let us now consider an arbitrary minimizing sequence $\{(a_0^k,v_k,\widetilde u_k)\}_{k\in\mathbb{N}}$ for $ \mathscr{F}^\eps_{\rm avd}$ or $ \mathscr{F}^\eps_{\rm opc}$. By Theorem \ref{thmexisgen}, we can find for each $k\in\mathbb{N}$ a minimizer $u_k$ of $F_\eps^{\mu(v_k)}$ with base point $a_0^k$ over $1+H^1_0(\Omega)\cap L^\infty(\Omega)$. Then, 
$F_\eps^{\mu(v_k)}(u_k)\leq F_\eps^{\mu(v_k)}(\widetilde u_k)$, so that  $\{(a_0^k,v_k,u_k)\}_{k\in\mathbb{N}}$ is also a minimizing sequence. 
\vskip3pt

\noindent{\it Case 1: minimizing $ \mathscr{F}^\eps_{\rm avd}$.} Since  $\sup_k  \mathscr{F}^\eps_{\rm avd}(a_0^k,v_k,u_k)<\infty$, we can find a (not relabeled) subsequence such that $v_k\mathop{\rightharpoonup}\limits^* v_\eps$ and ${\rm div}\,v_k\mathop{\rightharpoonup}\limits^* {\rm div}\,v_\eps$ as measures for some $v_\eps\in\mathscr{V}_{\rm avd}$ (note that the divergence free condition is closed under those weak* convergences), and $a_0^k\to a_0^\eps$ for some $a_0^\eps\in\overline\Omega_0$. Since $\mu(v_k)\mathop{\rightharpoonup}\limits^*\mu(v_\eps)$, we infer from Proposition \ref{limitweak} that (up to a further subsequence) $u_k\to u_\eps$ strongly in $H^1(\Omega)$ to some minimizer $u_\eps$ of  $F_\eps^{\mu(v_\eps)}$ with base point $a_0^\eps$ over $1+H^1_0(\Omega)\cap L^\infty(\Omega)$, and $F_\eps^{\mu(v_k)}(u_k)\to F_\eps^{\mu(v_\eps)}(u_\eps)$. Since the total variation is lower semicontinuous with respect to the weak* convergence of measures, we can now deduce that 
$$\mathscr{F}^\eps_{\rm avd}(a_0^\eps,v_\eps,u_\eps)\leq \lim_{k\to\infty} \mathscr{F}^\eps_{\rm avd}(a_0^k,v_k,u_k) = \inf  \mathscr{F}^\eps_{\rm avd}\,, $$
and $(a_0^\eps,v_\eps,u_\eps)$ is a minimizer of $\mathscr{F}^\eps_{\rm avd}$. 
\vskip3pt

\noindent{\it Case 2: minimizing $ \mathscr{F}^\eps_{\rm opc}$.} We argue as in Case 1, replacing the weak* convergence of the $v_k$'s by the weak convergence in $L^2(\Omega_0)$. 
\end{proof}

\begin{remark}
If $(a_0^\eps,v_\eps,u_\eps)$ is a minimizer of $\mathscr{F}^\eps_{\rm avd}$ or $\mathscr{F}^\eps_{\rm opc}$, then $u_\eps$ is a minimizer of  $F_\eps^{\mu(v_\eps)}$ with base point $a_0^\eps$ over $1+H^1_0(\Omega)\cap L^\infty(\Omega)$. Therefore, $u_\eps\in W^{1,p}(\Omega)$ for every $p<\infty$ (in particular, $u_\eps\in C^{0,\alpha}(\Omega)$ for every $\alpha\in(0,1)$). We did not investigate the regularity of the vector field $v_\eps$, and this question remains essentially open. 
\end{remark}

%================================================================================================================ 
%================================================================================================================ 
 
 \section{Asymptotic of minimizers}\label{sectasympt}  
 
%================================================================================================================ 
%================================================================================================================ 

\subsection{Towards the Steiner problem}

 The objective of this subsection is to prove Theorem \ref{thmmain2}. We start with elementary comments about the Steiner problem \eqref{classStein}.  Setting 
$$\mathscr{S}(\{a_0\}\cup{\rm spt}\,\mu):=\inf\Big\{\mathcal{H}^1(K): K\subset\R^2 \text{ compact connected, } K\supset\{a_0\}\cup{\rm spt}\,\mu \Big\} \,, $$
one has $\mathscr{S}(\{a_0\}\cup{\rm spt}\,\mu)<\infty$ if and only if $\mathcal{H}^1({\rm spt}\,\mu)<\infty$.  In addition, if we denote by $\boldsymbol{\pi}_0$ the orthogonal projection on the convex set $\overline\Omega_0$, then $\mathcal{H}^1(\boldsymbol{\pi}_0(K))\leq \mathcal{H}^1(K)$ for any admissible competitor $K\subset\R^2$, with equality if and only if $K$ in contained in $\overline\Omega_0$.  Obviously $\boldsymbol{\pi}_0(K)$ is still an admissible competitor, and we infer that any solution of the Steiner problem \eqref{classStein} is contained~$\overline\Omega_0$. Hence, 
\begin{multline}\label{idSteinpb}
\mathscr{S}(\{a_0\}\cup{\rm spt}\,\mu)=\min\Big\{\mathcal{H}^1(K): K\subset\overline\Omega_0 \text{ compact connected,}\\
 K\supset\{a_0\}\cup{\rm spt}\,\mu \Big\} <\infty\,,
\end{multline}
and existence easily follows from Blaschke and Golab theorems (see e.g. \cite{AT}). 
\vskip3pt

The proof of Theorem \ref{thmmain2} departs from the results in \cite{BLS}. The first ingredient is the following lower estimate taken from \cite[Lemma 3.1]{BLS}.

 \begin{lemma}[\cite{BLS}]\label{lemliminf}
 Let $\{v_k\}_{k\in\mathbb{N}}\subset 1+H^1_0(\Omega)\cap C^0(\overline\Omega)$ satisfying $0\leq v_k\leq 1$, and 
 \begin{equation}\label{aprestliminf}
 \sup_{k\in\mathbb{N}}\left(\eps_k\int_\Omega|\nabla v_k|^2\,dx+\frac{1}{4\eps_k}\int_\Omega(1-v_k)^2\,dx+\frac{1}{\alpha_k}\int_{\overline\Omega_0}{\bf D}(v_k;a_0,x)\,d\mu\right)<\infty\,,
 \end{equation}
 for some sequence  $\alpha_k\to0$  of positive numbers. Assume that the sequence $x\mapsto {\bf D}(v_k;a_0,x)$ converges uniformly on $\overline\Omega_0$ to some function ${\bf d}_*:\overline\Omega_0\to [0,\infty)$. Then, $K_*:=\{{\bf d_*}=0\}$ is a compact connected subset of $\overline\Omega_0$ containing $\{a_0\}\cup{\rm spt}\,\mu$, and 
\begin{equation}\label{liminfrecseq}
\mathcal{H}^1(K_*)\leq \liminf_{k\to\infty}\left(\eps_k\int_\Omega|\nabla v_k|^2\,dx+\frac{1}{4\eps_k}\int_\Omega(1-v_k)^2\,dx\right)\,.
\end{equation}
\end{lemma}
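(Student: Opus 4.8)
The plan is to follow the strategy of \cite{BLS} and reduce the lower bound to a one-dimensional slicing argument combined with the co-area formula, using the hypothesis that the geodesic distances ${\bf D}(v_k;a_0,\cdot)$ converge uniformly. First I would establish that $K_*:=\{{\bf d}_*=0\}$ is compact, connected, and contains $\{a_0\}\cup{\rm spt}\,\mu$. Compactness is clear since ${\bf d}_*$ is continuous on the compact set $\overline\Omega_0$. That $a_0\in K_*$ is immediate from ${\bf D}(v_k;a_0,a_0)=0$. For ${\rm spt}\,\mu\subset K_*$ one uses \eqref{aprestliminf}: since $\tfrac{1}{\alpha_k}\int_{\overline\Omega_0}{\bf D}(v_k;a_0,x)\,d\mu\leq C$ and $\alpha_k\to0$, the functions ${\bf D}(v_k;a_0,\cdot)$ tend to $0$ in $L^1(\mu)$, hence (by uniform convergence) ${\bf d}_*=0$ $\mu$-a.e., and since ${\bf d}_*$ is continuous and ${\rm spt}\,\mu$ is the support, ${\bf d}_*\equiv0$ on ${\rm spt}\,\mu$. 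Connectedness of $K_*$ is the more delicate point: if $K_*$ were disconnected, one could separate it into two nonempty relatively closed pieces at positive distance, contradicting the fact that for each $x$ with ${\bf d}_*(x)=0$ there is, for large $k$, an almost-geodesic from $a_0$ to $x$ of small $(\delta+v_k^2)$- (here just $v_k$-) weighted length; the sublevel sets $\{{\bf d}_*\leq t\}$ for small $t$ are connected because they are connected through the near-zero region of the $v_k$, and $K_*=\bigcap_{t>0}\{{\bf d}_*\leq t\}$ is a decreasing intersection of compact connected sets, hence connected.

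The heart of the proof is the energy inequality \eqref{liminfrecseq}. The plan is to relate $\mathcal H^1(K_*)$ to the sublevel sets of $v_k$ and then to the diffuse energy. Fix $t\in(0,1)$ and write $A_k^t:=\{v_k\leq t\}$; the key geometric claim, borrowed from \cite[Lemma 3.1]{BLS}, is that $A_k^t$ (or a slight enlargement of it) ``captures'' $K_*$ in the sense that $K_*\subset \overline{A_k^t}$ up to a set of small diameter, or more precisely that $\mathcal H^1(K_*)\le \liminf_k \mathrm{per}$-type quantity of $A_k^t$. This follows because a point $x$ with ${\bf d}_*(x)=0$ satisfies ${\bf D}(v_k;a_0,x)\to0$, so an almost-minimizing curve $\Gamma_k$ from $a_0$ to $x$ has $\int_{\Gamma_k}v_k\,d\mathcal H^1\to0$; consequently $\Gamma_k$ spends most of its length in $\{v_k\le t\}$, which forces the connected component of $\{v_k\le t\}$ containing $a_0$ to come close to $x$. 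Passing to the Hausdorff limit of these components (using Blaschke's selection theorem and Golab's theorem on lower semicontinuity of $\mathcal H^1$ under Hausdorff convergence of connected compact sets) produces a connected compact set $K_t\supseteq K_*$ with $\mathcal H^1(K_t)\le \liminf_k \mathcal H^1(\partial\{v_k\le t\})$ or a comparable bound. Then the Modica--Mortola / Cahn--Hilliard lower bound via the co-area formula gives, for a.e. $t$,
\begin{equation*}
\liminf_{k\to\infty}\left(\eps_k\int_\Omega|\nabla v_k|^2\,dx+\frac{1}{4\eps_k}\int_\Omega(1-v_k)^2\,dx\right)\ \ge\ \liminf_{k\to\infty}\int_\Omega 2\,\sqrt{\tfrac14(1-v_k)^2}\,|\nabla v_k|\,dx\ =\ \liminf_{k\to\infty}\int_0^1(1-t)\,\mathcal H^1(\{v_k=t\})\,dt,
\end{equation*}
and by Fatou together with the capture property $\mathcal H^1(\{v_k=t\})\gtrsim \mathcal H^1(K_*)$ for a.e. small $t$ (after letting the separate threshold parameters tend to their limits), one recovers \eqref{liminfrecseq}.

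The main obstacle I expect is the connectedness-capture step: making rigorous that the sublevel sets $\{v_k\le t\}$, which a priori could have many components and wild geometry, have a component whose Hausdorff limit both contains $K_*$ and has controlled $\mathcal H^1$-measure. This is exactly where the uniform convergence of ${\bf D}(v_k;a_0,\cdot)$ is essential — it is what glues the relevant component to all of $K_*$ simultaneously — and where one must be careful to choose the threshold $t$ (and, in the full $F_\eps^\mu$ setting, the auxiliary parameter $\delta_\eps$) in a good range so that Chebyshev-type estimates on $\int_\Omega(1-v_k)^2$ control the measure of $\{v_k\le t\}$ away from $K_*$. Since this lemma is quoted verbatim from \cite[Lemma 3.1]{BLS}, in the paper itself I would simply cite that reference; the sketch above indicates the argument one would reproduce if a self-contained proof were required.
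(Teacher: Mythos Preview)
The paper does not prove this lemma itself; it is quoted directly from \cite[Lemma~3.1]{BLS}, precisely as you note in your concluding sentence, so your proposal matches the paper's treatment exactly. Your sketch of the underlying BLS argument is in the right spirit; the only point to watch if you were to reproduce it in full is the constant in the co-area step: one needs $\mathcal{H}^1(\{v_k=t\})$ bounded below by essentially $2\mathcal{H}^1(K_*)$ (the level curves enclose the one-dimensional set $K_*$ on both sides), since $\int_0^1(1-t)\,dt=\tfrac12$, and without this factor of two your inequality would only yield $\tfrac12\mathcal{H}^1(K_*)$.
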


The second ingredient is an explicit construction of a ``recovery sequence'' showing the sharpness of the previous lemma. The construction  is provided by \cite[Lemma 2.8]{BLS} (see also \cite{ATo1}) that we (slightly) reformulate as 
 
 \begin{lemma}[\cite{BLS}]\label{lemlimsup}
 Let $K\subset \overline\Omega_0$ be a compact connected set containing $\{a_0\}\cup{\rm spt}\,\mu$ and such that $\mathcal{H}^1(K)<\infty$. 
 There exists a sequence $\{\varphi_k\}_{k\in\mathbb{N}}\subset H^1(\Omega)\cap C^{0}_c(\Omega)$ satisfying $\varphi_k=1$ on~$K$, and 
 \begin{equation}\label{limsuprecseq}
 \limsup_{k\to\infty} \left(\eps_k\int_\Omega|\nabla\varphi_k|^2\,dx +\frac{1}{4\eps_k}\int_\Omega|\varphi_k|^2\,dx\right)\leq \mathcal{H}^1(K)\,.
 \end{equation}
 \end{lemma}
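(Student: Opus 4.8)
The plan is to produce the standard Ambrosio--Tortorelli / Modica--Mortola recovery sequence, obtained from the distance function to $K$ together with an essentially optimal one-dimensional transition profile. The single geometric ingredient is the identification of Minkowski content with length for a continuum of finite $\mathcal{H}^1$-measure: a compact connected set $K$ with $\mathcal{H}^1(K)<\infty$ is arcwise connected and is the image of a Lipschitz path (see e.g. \cite{AT}), and for such a curve the classical Minkowski content theorem gives
$$\lim_{r\downarrow0}\frac{\mathcal{L}^2\big(\{x:\mathrm{dist}(x,K)<r\}\big)}{2r}=\mathcal{H}^1(K)\,.$$
Note that Lemma~\ref{lemgeom} yields only the one-sided bound with a non-sharp constant, so the sharp statement above is genuinely needed.

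For the profile, fix thresholds $T_k\uparrow+\infty$ with $\varepsilon_kT_k\to0$ (for instance $T_k:=|\log\varepsilon_k|$), and let $\phi_k:[0,\infty)\to[0,1]$ be the truncated exponential
$$\phi_k(t):=\frac{e^{-t/2}-e^{-T_k/2}}{1-e^{-T_k/2}}\ \ \text{for }0\le t\le T_k\,,\qquad \phi_k(t):=0\ \ \text{for }t\ge T_k\,,$$
which is Lipschitz, non-increasing, equals $1$ at $t=0$ and is supported in $[0,T_k]$. Set $\varphi_k(x):=\phi_k\big(\mathrm{dist}(x,K)/\varepsilon_k\big)$. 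Then $\varphi_k=1$ on $K$, and since $\mathrm{dist}(\cdot,K)$ is $1$-Lipschitz while $K\subset\overline\Omega_0\Subset\Omega$, for $k$ large $\varphi_k$ is Lipschitz with $\mathrm{supp}\,\varphi_k\subset\{\mathrm{dist}(\cdot,K)\le\varepsilon_kT_k\}\Subset\Omega$, hence $\varphi_k\in H^1(\Omega)\cap C^0_c(\Omega)$; for the finitely many remaining indices one takes any fixed element of $C^0_c(\Omega)$ equal to $1$ on $K$, which does not affect the $\limsup$.

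For the energy, write $d:=\mathrm{dist}(\cdot,K)$ and use $|\nabla d|\le1$ a.e. together with the chain rule:
$$\varepsilon_k\int_\Omega|\nabla\varphi_k|^2\,dx+\frac{1}{4\varepsilon_k}\int_\Omega\varphi_k^2\,dx\ \le\ \frac{1}{\varepsilon_k}\int_\Omega g_k\!\left(\frac{d(x)}{\varepsilon_k}\right)dx\,,\qquad g_k:=|\phi_k'|^2+\tfrac14\phi_k^2\,.$$
Here $g_k$ is non-increasing, supported in $[0,T_k]$, bounded by $2e^{-t}$ for $k$ large, and satisfies $\int_0^\infty g_k(s)\,ds\to\tfrac12$. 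By the layer-cake formula applied to $g_k$, the right-hand side equals $\int_0^\infty\varepsilon_k^{-1}m\big(\varepsilon_kG_k(\lambda)\big)\,d\lambda$, with $m(r):=\mathcal{L}^2(\{d<r\})$ and $G_k(\lambda):=\sup\{s:g_k(s)>\lambda\}$. Since $0\le G_k(\lambda)\le\max(0,\log(2/\lambda))\in L^1(0,\infty)$, $\varepsilon_kG_k(\lambda)\to0$, $m(r)\le C_Kr$ for small $r$ (a covering argument), and $G_k(\lambda)\to\sup\{s:\tfrac12e^{-s}>\lambda\}$, dominated convergence together with the Minkowski content identity gives
$$\limsup_{k\to\infty}\Big(\varepsilon_k\int_\Omega|\nabla\varphi_k|^2\,dx+\frac{1}{4\varepsilon_k}\int_\Omega\varphi_k^2\,dx\Big)\ \le\ 2\,\mathcal{H}^1(K)\int_0^\infty\tfrac12e^{-s}\,ds\ =\ \mathcal{H}^1(K)\,,$$
i.e. \eqref{limsuprecseq}.

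The only step that is not routine bookkeeping is the sharp Minkowski content estimate for continua (in particular, getting the constant $2$ rather than the $20\pi$ of Lemma~\ref{lemgeom}); the profile computation, the chain rule, the layer-cake rearrangement and the limit passage are elementary. One may alternatively bypass the whole discussion and simply invoke \cite[Lemma~2.8]{BLS} (equivalently \cite{ATo1}), where exactly this construction is carried out.
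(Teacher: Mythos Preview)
The paper does not give its own proof of this lemma: it is stated with a citation to \cite[Lemma~2.8]{BLS} (and implicitly \cite{ATo1}). Your proposal supplies precisely the standard Ambrosio--Tortorelli recovery construction that underlies that cited result --- distance function to $K$, truncated optimal exponential profile, Minkowski content identity for rectifiable continua, layer-cake / dominated convergence --- and the computations are correct (in particular $g_k$ is indeed non-increasing on $[0,T_k]$, your dominating function $\max(0,\log(2/\lambda))$ is integrable, and the final constant comes out right). As you note yourself in the last sentence, this is exactly what one finds in \cite{BLS}, so your argument and the paper's (cited) proof coincide.
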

 
\begin{remark}\label{H1finitenergbd} 
 As we shall see below, Lemmas \ref{lemliminf} \& \ref{lemlimsup} imply that assumption $\mathcal{H}^1({\rm spt}\,\mu)<\infty$ is necessary and sufficient to ensure that the minimum value of $ F^\mu_\eps$ over $1+H^1_0(\Omega)$ remains bounded as $\eps\downarrow 0$.  
 \end{remark}

\begin{proof}[Proof of Theorem \ref{thmmain2}]
{\it Step 1.} As discussed above, our assumption $\mathcal{H}^1({\rm spt}\,\mu)<\infty$ implies  $\mathscr{S}(\{a_0\}\cup{\rm spt}\,\mu)<\infty$. Now, given an arbitrary compact connected $K\subset\overline\Omega_0$ containing $\{a_0\}\cup{\rm spt}\,\mu$ and such that $\mathcal{H}^1(K)<\infty$, we consider the sequence $\{\varphi_k\}_{k\in\mathbb{N}}$ provided by Lemma \ref{lemlimsup}, and we set $v_k:=1-\varphi_k\in 1+H^1_0(\Omega)\cap C^0(\overline\Omega)$. We claim that 
\begin{equation}\label{contrrecseq}
\int_{\overline\Omega_0}{\bf D}(\delta_{\eps_k}+v_k^2;a_0,x)\,d\mu \leq \delta_{\eps_k}\mathcal{H}^1(K)\|\mu\|\,.
\end{equation}
Indeed, since $K$ is connected and $\mathcal{H}^1(K)<\infty$,  \cite[Theorem~4.4.7]{AT} yields the existence for every $x\in{\rm spt}\,\mu$ of a curve $\gamma_x\in\mathscr{P}(a_0,x)$ such that $\Gamma(\gamma_x)\subset K$. Since $v_k=0$ on $K$, we deduce that 
$${\bf D}(\delta_{\eps_k}+v_k^2;a_0,x)\leq \int_{\Gamma(\gamma_x)} (\delta_{\eps_k}+v_k^2)\,d\mathcal{H}^1= \delta_{\eps_k}\mathcal{H}^1(\Gamma(\gamma_x))\leq  \delta_{\eps_k}\mathcal{H}^1(K)\quad\forall x\in{\rm spt}\,\mu\,.$$
Integrating this inequality with respect to $\mu$ leads to \eqref{contrrecseq}. Since $\delta_{\eps_k}/\lambda_{\eps_k}\to 0$, we infer from \eqref{limsuprecseq} and \eqref{contrrecseq} that $\limsup_{k}F^\mu_{\eps_k}(v_k)\leq \mathcal{H}^1(K)$. 
On the other hand, $F^\mu_{\eps_k}(u_k)\leq F^\mu_{\eps_k}(v_k)$ by minimality of $u_k$, and we deduce that $\limsup_{k}F^\mu_{\eps_k}(u_k)\leq \mathcal{H}^1(K)$. From the arbitrariness of $K$ and \eqref{idSteinpb}, we conclude that 
\begin{equation}\label{bornsupuk}
\limsup_{k\to\infty}F^\mu_{\eps_k}(u_k)\leq \mathscr{S}(\{a_0\}\cup{\rm spt}\,\mu)<\infty\,.
\end{equation}
\vskip3pt

\noindent{\it Step 2.} Since $0\leq u_k\leq 1$, the sequence  $x\mapsto {\bf D}(\delta_{\eps_k}+u^2_k;a_0,x)$ is a sequence of $(1+\delta_{\eps_k})$-Lipschitz functions on $\overline\Omega_0$, all vanishing at the point $a_0$. By the Arzel\`a-Ascoli Theorem, we can find a (not relabeled) subsequence such that $x\mapsto {\bf D}(\delta_\eps+u^2_k;a_0,x)$ converges uniformly on $\overline\Omega_0$ to some function ${\bf d}_*:\overline\Omega_0\to [0,\infty)$.

Let us now set $\alpha_k:=\lambda_{\eps_k}/(2\sqrt{\delta_{\eps_k}})$. Since $\delta_{\eps_k}=\lambda^\beta_{\eps_k}$ with $\beta\in(1,2)$, we have $\alpha_k\to 0$. Noticing that 
$2\sqrt{\delta_{\eps_k}}u_k\leq \delta_{\eps_k}+u^2_k$,  we have $2\sqrt{\delta_{\eps_k}}{\bf D}(u_k;a_0,x)\leq {\bf D}(\delta_{\eps_k}+u^2_k;a_0,x)$ for every $x\in\overline\Omega_0$. 
In view of \eqref{bornsupuk}, we conclude that 
\begin{equation}\label{intermestasy}
\eps_k\int_\Omega|\nabla u_k|^2\,dx+\frac{1}{4\eps_k}\int_\Omega(1-u_k)^2\,dx+\frac{1}{\alpha_k}\int_{\overline\Omega_0}{\bf D}(u_k;a_0,x)\,d\mu\leq F^\mu_{\eps_k}(u_k)\leq C\,, 
\end{equation}
 for some constant $C$ independent of $k$. By Lemma \ref{lemliminf}, the compact set $K_*:=\{{\bf d}_*=0\}$ is connected and  contains $\{a_0\}\cup{\rm spt}\,\mu$. Gathering  \eqref{liminfrecseq}, \eqref{bornsupuk}, and \eqref{intermestasy} yields
 $$\mathcal{H}^1(K_*)\leq \liminf_{k\to\infty} F^\mu_{\eps_k}(u_k)\leq \limsup_{k\to\infty}  F^\mu_{\eps_k}(u_k)\leq  \mathscr{S}(\{a_0\}\cup{\rm spt}\,\mu)\,.$$
 Therefore, $\mathcal{H}^1(K_*)= \mathscr{S}(\{a_0\}\cup{\rm spt}\,\mu)$ (i.e., $K_*$ solves the Steiner problem relative to $\{a_0\}\cup{\rm spt}\,\mu$), 
 and $F^\mu_{\eps_k}(u_k)\to \mathcal{H}^1(K_*)$. 
 \vskip3pt
 
 \noindent{\it Step 3.} For a radius $r\in (0,\boldsymbol{\eta}_0/2)$ (where $\boldsymbol{\eta}_0$ is given in Lemma \ref{estigradhessloin}), we denote by $V_r$ the open tubular neighborhood of $K_*$ of radius $r$.  Since $K_*\subset\overline\Omega_0$, we have $\overline V_{r/2}\subset\overline V_r\subset \Omega$. We claim that for every $r\in(0,\boldsymbol{\eta}_0/2)$ there exists $k_0(r)\in\mathbb{N}$ such that for every $k\geq k_0(r)$,
 \begin{equation}\label{eqdsVr}
 -\eps_k^2\Delta u_k=\frac{1}{4}(1-u_k) \quad \text{in $\mathscr{D}^\prime(\Omega\setminus \overline V_{r/2})$}\,.
 \end{equation}
To establish \eqref{eqdsVr}, we first invoke the continuity of ${\bf d}_*$ to find $\tau_r>0$ such that $\{{\bf d}_*< 3\tau_r\}\subset V_{r/2}$. Since  $x\mapsto {\bf D}(\delta_{\eps_k}+u^2_k;a_0,x)$ converges uniformly to ${\bf d}_*$, we can find $k_1(r)\in\mathbb{N}$ such that 
 \begin{equation}\label{tardtard2212bis}
 \Big\{x\in\overline\Omega_0: {\bf D}(\delta_{\eps_k}+u^2_k;a_0,x)\leq2\tau_r\Big\}\subset  \{{\bf d}_*< 3\tau_r\} \subset V_{r/2}\quad\forall k\geq k_1(r)\,.
 \end{equation}
On the other hand, since $x\mapsto {\bf D}(\delta_{\eps_k}+u^2_k;a_0,x)$ converges uniformly to $0$ on $K_*\supset {\rm spt}\,\mu$, we can find $k_2(r)\in\mathbb{N}$ such that 
 \begin{equation}\label{tardtard2212}
 {\rm spt}\,\mu\subset  \Big\{x\in\overline\Omega_0: {\bf D}(\delta_{\eps_k}+u^2_k;a_0,x)\leq\tau_r\Big\}\quad\forall k\geq k_2(r)\,.
 \end{equation}
Set $k_0(r):=\max(k_1(r),k_2(r))$, and let us prove that for $k\geq k_0(r)$, 
\begin{multline}\label{longsent}
\text{\sl for all }x\in {\rm spt}\,\mu\;\text{\sl and all }\kappa\in(0,\tau_r)\,,\; 
\text{\sl there exists }\gamma^\kappa_{x}\in\mathscr{P}(a_0,x)\;\text{\sl satisfying}\\
\Gamma(\gamma_x^\kappa)\subset V_{r/2}\;\text{\sl and }\int_{\Gamma(\gamma_x^\kappa)}(\delta_{\eps_k}+u_k^2)\,d\mathcal{H}^1\leq {\bf D}(\delta_{\eps_k}+u^2_k;a_0,x)+\kappa\,. 
\end{multline}
Obviously, for $x\in {\rm spt}\,\mu$ and $\kappa\in(0,\tau_r)$ given, we can find $\gamma^\kappa_{x}\in\mathscr{P}(a_0,x)$ satisfying the second condition, and it suffices to check that 
$\Gamma(\gamma_x^\kappa)\subset V_{r/2}$. Fix $y\in \Gamma(\gamma_x^\kappa)$, and consider $\theta_y\in[0,1]$ such that $\gamma_x^\kappa(\theta_y)=y$. Setting 
$\widetilde \gamma_y(t):=\gamma_x^\kappa(t\theta_y)$, we have $\widetilde\gamma_y\in\mathscr{P}(a_0,y)$ and  $\Gamma(\widetilde\gamma_y)\subset \Gamma(\gamma_x^\kappa)$. Consequently, 
\begin{multline*}
{\bf D}(\delta_{\eps_k}+u^2_k;a_0,y)\leq \int_{\Gamma(\widetilde\gamma_y)}(\delta_{\eps_k}+u_k^2)\,d\mathcal{H}^1\\
\leq \int_{\Gamma(\gamma_x^\kappa)}(\delta_{\eps_k}+u_k^2)\,d\mathcal{H}^1\leq {\bf D}(\delta_{\eps_k}+u^2_k;a_0,x)+\tau_r\leq 2\tau_r\,,
\end{multline*}
by \eqref{tardtard2212}. In view of \eqref{tardtard2212bis}, we have $y\in V_{r/2}$. Hence $\Gamma(\gamma_x^\kappa)\subset V_{r/2}$, and \eqref{longsent} is proved. 

From now on, we assume that $k\geq k_0(r)$. Fix an arbitrary $\varphi\in\mathscr{D}(\Omega\setminus \overline V_{r/2})$, $t\in\mathbb{R}\setminus\{0\}$, and set $w_k:=u_k+t\varphi$. 
Since $w_k=u_k$ in $V_{r/2}$, we infer from \eqref{longsent} that for every $x\in{\rm spt}\,\mu$, 
\begin{multline*}
{\bf D}(\delta_{\eps_k}+w_k^2;a_0,x)\leq \int_{\Gamma(\gamma_x^\kappa)}(\delta_{\eps_k}+w_k^2)\,d\mathcal{H}^1\\
=\int_{\Gamma(\gamma_x^\kappa)}(\delta_{\eps_k}+u_k^2)\,d\mathcal{H}^1 \leq  {\bf D}(\delta_{\eps_k}+u^2_k;a_0,x)+\kappa\quad  \forall\kappa\in(0,\tau_r)\,.
\end{multline*}
Letting $\kappa\downarrow 0$ leads to ${\bf D}(\delta_{\eps_k}+w_k^2;a_0,x)\leq   {\bf D}(\delta_{\eps_k}+u^2_k;a_0,x)$ for every $x\in{\rm spt}\,\mu$. Therefore, 
\begin{equation}\label{matin}
\int_{\overline\Omega_0} {\bf D}(\delta_{\eps_k}+w_k^2;a_0,x)\,d\mu\leq  \int_{\overline\Omega_0} {\bf D}(\delta_{\eps_k}+u_k^2;a_0,x)\,d\mu\,.
\end{equation}
By minimality of $u_k$ we have $F^\mu_{\eps_k}(w_k)-F^\mu_{\eps_k}(u_k)\geq 0$, and  inserting \eqref{matin} in this inequality leads to 
$$2t\eps_k\int_\Omega\nabla u_k\nabla\varphi\,dx+\frac{t}{2\eps_k}\int_\Omega(1-u_k)\varphi\,dx+t^2\eps_k\int_{\Omega} |\nabla\varphi|^2\,dx+\frac{t^2}{\eps^2}\int_{\Omega} |\varphi|^2\,dx \geq 0\,.$$
Dividing this inequality by $t$, and letting $t\downarrow 0$ and $t\uparrow 0$ yields
$$2\eps_k\int_\Omega \nabla u_k\nabla\varphi\,dx+\frac{1}{2\eps_k}\int_\Omega(1-u_k)\varphi\,dx=0\,,$$
and \eqref{eqdsVr} is proved. 
\vskip3pt

\noindent{\it Step 4.} Let us fix $r\in (0,\boldsymbol{\eta}_0/2)$. From \eqref{eqdsVr} and standard elliptic regularity, we infer that $u_k\in C^\infty(\overline\Omega\setminus\overline V_{r/2})$ whenever $k\geq k_0(r)$. Then, arguing as in Lemma \ref{comp}, we derive from  \eqref{eqdsVr} that for $k\geq k_0(r)$,
\begin{equation}\label{estiexpgen}
0\leq 1-u_k(x)\leq \exp\Big(-C_r/\eps_k\Big)\qquad\forall x\in\Omega\setminus V_{3r/4} \,,
\end{equation}
for some constant $C_r>0$ independent of $\eps_k$. Inserting estimate \eqref{estiexpgen} in  \eqref{eqdsVr}, we deduce as in Lemma \ref{estigradhessloin} that for  $k\geq k_0(r)$,
$$\eps_k|\nabla u_k|+\eps_k^2|\nabla^2u_k |\leq C_{r,\boldsymbol{\eta_0}}\exp \Big(-C^\prime_r/\eps_k\Big)\quad\text{in $\Omega\setminus V_{r}$} \,,$$
for some constants $C_{r,\boldsymbol{\eta_0}}$ and $C^\prime_r>0$ independent of $\eps_k$. Hence $u_k\to 1$ in $C^2(\overline\Omega\setminus V_r)$. 
\vskip3pt

\noindent{\it Step 5.} Let us fix $t\in(0,1)$, and show that $\{u_k\leq t\}\to K_*$ in the Hausdorff  sense.  To this purpose, we fix a radius $r>0$. From Step 4 above, we first deduce that $\{u_k\leq t\}\subset V_r$ whenever $k$ is large enough. Before going further, notice that $\{u_k\leq t\}\not=\emptyset$ for $k$ large. Indeed, if $\{u_k\leq t\}=\emptyset$ for infinitely many $k$'s, then 
$$\int_{\overline\Omega_0}{\bf D}(\delta_{\eps}+u_k^2;a_0,x)\,d\mu\geq t^2\int_{\overline\Omega_0}|x-a_0|\,d\mu \quad\text{for infinitely many $k$'s}\,.$$
Since ${\rm spt}\,\mu$ is not reduced to $\{a_0\}$, the right hand side does not vanish, while the left goes to $0$ as $k\to\infty$ by \eqref{bornsupuk}, a contradiction. 

We now denote by $W^k_r$ the open tubular neighborhood of $\{u_k\leq t\}$ of radius $r$. We aim to show that $K_*\subset W^k_r$ for $k$ sufficiently large. Assume by contradiction that for some subsequence $\{k_j\}$, we have $K_*\not\subset W^{k_j}_r$. Then we can find a sequence $\{x_j\}\subset K_*$ such that $x_j\not\in W^{k_j}_r$ for every $j\in\mathbb{N}$. Extracting a subsequence if necessary, we can assume that $x_j\to x_*$ for some point $x_*\in K_*$. Since $\{u_{k_j}\leq t\}\subset \Omega$, by Blaschke's theorem we can also assume that $\{u_{k_j}\leq t\}\to S_t$ in the Hausdorff  sense for some compact set $S_t$. Then ${\rm dist}(x_*, S_t)\geq r$, and we cand find  $j_0(r)\in\mathbb{N}$ such that $\overline B(x_*,r/2)\cap \{u_{k_j}\leq t\}=\emptyset$ for $j\geq j_0(r)$. 
We now distinguish two cases. 

\noindent{\it Case 1.} If $x_*\not=a_0$, set $\tau:=1/2\min(r,|x_*-a_0|)$. Then for every $\gamma\in\mathscr{P}(a_0,x_*)$ we can find $t_\gamma\in(0,1)$ such that $\gamma(t_\gamma)\in\partial B(x_*,\tau)$ and $\gamma([t_\gamma,1])\subset \overline B(x_*,\tau)$. Consequently, for $j\geq j_0(r)$ we have 
$$\int_{\Gamma(\gamma)}(\delta_{\eps_{k_j}}+u^2_{k_j})\,d\mathcal{H}^1 \geq t^2\mathcal{H}^1\big(\gamma([t_\gamma,1])\big)\geq t^2\tau\quad\forall \gamma\in\mathscr{P}(a_0,x_*)\,. $$
In particular ${\bf D}(\delta_{\eps_{k_j}}+u^2_{k_j};a_0,x_*)\geq t^2\tau$ for $j\geq j_0(r)$. Letting $j\to\infty$ yields ${\bf d}_*(x_*)\geq t^2\tau$ which contradicts the fact $x_*\in K_*:=\{{\bf d}_*=0\}$. 

\noindent{\it Case 2.} Assume that $x_*=a_0$. Then the same argument as in Case 1 (applied to $x\in{\rm spt}\,\mu$ instead of $x_*$) 
shows that if $j\geq j_0(r)$, then 
$${\bf D}(\delta_{\eps_{k_j}}+u^2_{k_j};a_0,x)\geq \frac{t^2}{2}\min(r,|x-a_0|)\quad\forall x\in{\rm spt}\,\mu\,. $$
Since ${\rm spt}\,\mu$ is not reduced to $\{a_0\}$ by assumption, we have for $j\geq j_0(r)$, 
$$\int_{\overline\Omega_0} {\bf D}(\delta_{\eps_{k_j}}+u^2_{k_j};a_0,x)\,d\mu\geq \frac{t^2}{2}\int_{\overline\Omega_0} \min(r,|x-a_0|)\,d\mu>0\,. $$
Once again, the left hand side of this inequality goes to $0$ as $j\to\infty$ by \eqref{bornsupuk}, which provides the desired contradiction. 
\vskip3pt

\noindent{\it Step 6.} To complete the proof of Theorem \ref{thmmain2}, it only remains to show that ${\bf d}_*(x)={\rm dist}(x,K_*)$. Since $K_*:=\{{\bf d}_*=0\}$, we only have to show this identity for $x\not\in K_*$. First, since ${\bf d}_*$ is a $1$-Lipschitz function (as pointwise limite of $(1+\delta_{\eps_k})$-Lipschitz functions), we obviously have ${\bf d}_*(x)\leq {\rm dist}(x,K_*)$. Now fix a point $x\in\overline\Omega_0\setminus K_*$, an arbitrary $\tau \in\big(0,{\rm dist}(x,K_*)\big)$, and an arbitrary $t\in(0,1)$. We infer from Step 5 that 
$u_k^2\geq t^2$ in $\overline B(x,\tau)$ for $k$ large enough. Then, arguing as in Step 5, Case 1, we obtain ${\bf D}(\delta_{\eps_{k}}+u^2_{k};a_0,x)\geq t^2\tau$ for $k$ large enough. 
Letting $k\to\infty$ yields ${\bf d}_*(x)\geq t^2\tau$. From the arbitrariness of $\tau$ and $t$, we conclude that ${\bf d}_*(x)\geq {\rm dist}(x,K_*)$. 
\end{proof} 
 
 \begin{remark}\label{rembad}
 In the spirit of Proposition \ref{limitweak}, one can study the asymptotic behavior of minimizers of $F_\eps^{\mu_\eps}$ over $1+H^1_0(\Omega)$, for some sequence of measures  $\mu_\eps\mathop{\rightharpoonup}\limits^*\mu$ as $\eps\to0$, and eventually varying base points $a_0^\eps\to a_0$. In this general setting, it is necessary  to assume that 
$\sup_{\eps\in(0,1)} F_\eps^{\mu_\eps}(u_\eps)<\infty$, where $u_\eps$ denotes a minimizer of $F_\eps^{\mu_\eps}$ over $1+H^1_0(\Omega)$. Since \cite[Lemma 3.1]{BLS} actually allows for such $\eps$-dependence in the a priori estimate \eqref{aprestliminf}, Steps 1 \& 2 in the previous proof carry over. Hence, up to a subsequence, $x\mapsto {\bf D}(\delta_\eps+u^2_\eps;a_0^\eps,x)$ converges uniformly on $\overline\Omega_0$  as $\eps\to 0$ to some $1$-Lipschitz function ${\bf d}_*$, the compact set $K_*:=\{{\bf d}_*=0\}$  is connected and $\{a_0\}\cup{\rm spt}\,\mu\subset K_*$. Then, $K_*$ solves the Steiner problem relative to $\{a_0\}\cup{\rm spt}\,\mu$, and $F_\eps^{\mu_\eps}(u_\eps)\to\mathcal{H}^1(K_*)$.  
   
 If we assume that 
\begin{equation}\label{condcvspt}
\text{${\rm spt}\,\mu_\eps\to {\rm spt}\,\mu$ in the Hausdorff  sense}\,,
\end{equation} 
 then (all) the other conclusions of Theorem \ref{thmmain2} remain. The argument follows essentially the  same lines as above. 
Note that \eqref{condcvspt} includes the case where $\mu_\varepsilon$ is a discrete approximation of $\mu$ as in Lemma \ref{approxmeas}. 
 
On the other hand, if one drops condition \eqref{condcvspt}, then Hausdorff convergence of sublevel sets of minimizers can fail (their Hausdorff limit can be different from any Steiner set relative to $\{a_0\}\cup{\rm spt}\,\mu$). To illustrate this fact, let us consider the following example. Let $a_0,a_1,a_2\in \overline \Omega_0$ be three distinct points such that $a_1\in(a_0,a_2)$, and set $\mu_\kappa:=\delta_{a_0}+\delta_{a_1}+\kappa\delta_{a_2}$ with $\kappa\in[0,1]$. For each $\kappa>0$, the segment $[a_0,a_2]$ is the unique solution of the Steiner problem \eqref{classStein} relative to $\mu_{\kappa}$, while $[a_0,a_1]$ is the unique solution relative to $\mu_0$. Obviously, $\mu_{\kappa}\mathop{\rightharpoonup}\limits^*\mu_0$ as $\kappa\downarrow0$, but ${\rm spt}\,\mu_\kappa=\{a_0,a_1,a_2\}\not\to {\rm spt}\,\mu_0=\{a_0,a_1\}$. Now, consider two sequences $\kappa_j\downarrow0$ and $\eps_n\downarrow 0$, and for each $(j,n)\in\mathbb{N}^2$, a minimizer $u_{j,n}\in 1+H^1_0(\Omega)$ of $F_{\eps_n}^{\mu_{\kappa_j}}$ (with base point $a_0$). By Theorem \ref{thmmain2}, $\{u_{n,j}\leq 1/2\}\to [a_0,a_2]$ in the Hausdorff  sense as $n\to\infty$ for every $j\in\mathbb{N}$. Consequently, we can find a subsequence $\{n_j\}$ such that $\{u_{n_j,j}\leq 1/2\}\to [a_0,a_2]$ in the Hausdorff  sense as $j\to\infty$. 
\end{remark}

 \subsection{Towards the average distance and optimal compliance problems}\label{asymptavdopc}
 
 In this last subsection, we discuss the asymptotic behavior as $\eps\to 0$ of the functionals $\mathscr{F}^\eps_{\rm avd}$ and $\mathscr{F}^\eps_{\rm opc}$ defined in  
\eqref{defpfavd} and \eqref{defpfopc}, and of their minimizers. For this purpose, it is more convenient to consider the reduced functionals $\widetilde{\mathscr{F}}^\eps_{\rm avd}:\overline\Omega_0\times\mathscr{M}(\overline\Omega_0;\R^2)\to [0,\infty]$ and $\widetilde{\mathscr{F}}^\eps_{\rm opc}:\overline\Omega_0\times L^2(\Omega_0;\R^2)\to [0,\infty]$ given by 
$$\widetilde{\mathscr{F}}^\eps_{\rm avd}(a_0,v):=\min_{u\in 1+H^1_0(\Omega)\cap L^\infty(\Omega)} \mathscr{F}^\eps_{\rm avd}(a_0,v,u)\,,$$
and 
$$\widetilde{\mathscr{F}}^\eps_{\rm opc}(a_0,v):=\min_{u\in 1+H^1_0(\Omega)\cap L^\infty(\Omega)} \mathscr{F}^\eps_{\rm opc}(a_0,v,u)\,.$$
By Theorem \ref{mainmain}, for every $(a_0,v)\in \overline\Omega_0\times\mathscr{M}(\overline\Omega_0;\R^2)$, respectively every $(a_0,v)\in \overline\Omega_0\times L^2(\Omega_0;\R^2)$, there exists $u_\eps=u_\eps(a_0,v)\in 1+H^1_0(\Omega)\cap L^\infty(\Omega)$ such that 
$$\widetilde{\mathscr{F}}^\eps_{\rm avd}(a_0,v)= \mathscr{F}^\eps_{\rm avd}(a_0,v,u_\eps)\text{ or } \widetilde{\mathscr{F}}^\eps_{\rm avd}(a_0,v)=\mathscr{F}^\eps_{\rm opc}(a_0,v,u_\eps)\,.$$
Assuming that \eqref{deppara} holds, Theorem \ref{thmmain2} and Remark \ref{H1finitenergbd} then imply that $\widetilde{\mathscr{F}}^\eps_{\rm avd}$ and  $\widetilde{\mathscr{F}}^\eps_{\rm opc}$ converge pointwise as $\eps\to0$ to $\mathscr{F}_{\rm avd}$ and  $\mathscr{F}_{\rm opc}$, respectively. 

Beyond this pointwise convergence, one can reproduce the proof of \cite[Theorem 5.7]{BLS} (using assumption \eqref{deppara} as in Step 2 of the proof of Theorem \ref{thmmain2}) to show that $\widetilde{\mathscr{F}}^\eps_{\rm avd}$ actually $\Gamma$-converges to $\mathscr{F}_{\rm avd}$ (for the ($\overline\Omega_0\times$weak*)-topology), and $\widetilde{\mathscr{F}}^\eps_{\rm opc}$ $\Gamma$-converges to $\mathscr{F}_{\rm opc}$ (for the ($\overline\Omega_0\times$weak)-topology). In addition, if $\{(a_0^\eps,v_\eps)\}_{\eps>0}$ is a recovery sequence of a configuration $(a_0,v)$ of finite energy, and $\widetilde{\mathscr{F}}^\eps_{\rm avd}(a^\eps_0,v_\eps)= \mathscr{F}^\eps_{\rm avd}(a^\eps_0,v_\eps,u_\eps)$  or    $\widetilde{\mathscr{F}}^\eps_{\rm opc}(a^\eps_0,v_\eps)= \mathscr{F}^\eps_{\rm opc}(a^\eps_0,v_\eps,u_\eps)$, then $F_\eps^{\mu(v_\eps)}(u_\eps)\to \mathscr{S}(\{a_0\}\cup{\rm spt}\,\mu(v))$ as $\eps\to 0$, and the sequence $x\mapsto{\bf D}(\delta_\eps+u_\eps^2;a_0^\eps,x)$ converges uniformly on $\overline\Omega_0$ to some function ${\bf d_*}$. The set $K_*:=\{{\bf d_*}=0\}$ is connected, $\{a_0\}\cup{\rm spt}\,\mu(v)\subset K_*$, and $\mathcal{H}^1(K_*)= \mathscr{S}(\{a_0\}\cup{\rm spt}\,\mu(v))$, see Remark \ref{rembad}. 

The same consideration applies  in case $(a_0^\eps,v_\eps,u_\eps)$ is a minimizer of either $\mathscr{F}^\eps_{\rm avd}$ or $\mathscr{F}^\eps_{\rm opc}$. By $\Gamma$-convergence,  $(a_0^\eps,v_\eps)$ (sub)-converges as $\eps\to0$ to a minimizer $(a_0^\sharp,v^\sharp)$ of $\mathscr{F}_{\rm avd}$ or $\mathscr{F}_{\rm opc}$, respectively. 
Consequently, $K_*=K^\sharp_{\rm avd}$ or $K_*=K^\sharp_{\rm opc}$ as in \eqref{avion1}--\eqref{avion2}, i.e., $K_*$ solves the average distance problem or the optimal compliance problem, respectively. To conclude, one may wonder wether or not the sublevel sets $\{u_\eps\leq t\}$ Hausdorff converge to $K_*$, as in Theorem~\ref{thmmain2}. In view of Remark \ref{rembad}, this question remains quite unclear, and it certainly requires a specific analysis taking full advantage of the minimality of the pair $(v_\eps,u_\eps)$.

\section*{Acknowledgments}
The authors have been supported by the {\sl Agence Nationale de la Recherche} through the grants  ANR-12-BS01-0014-01 (Geometrya) and ANR-14-CE25-0009-01 (MAToS), and by  the PGMO research projects MACRO and COCA.

%=======================
% BIBLIOGRAPHY AND INDEX
%=======================


\begin{thebibliography}{100}
\bibitem{AdF} {\sc R.A. Adams, J.F. Fournier} : {\it Sobolev spaces}, Pure and Applied Mathematics {\bf 140}, Second Edition, Elsevier/Academic Press, Amsterdam (2003).
\vskip3pt
\bibitem{AFP} {\sc L. Ambrosio, N. Fusco, D. Pallara} : {\it Functions of Bounded Variation and Free Discontinuity Problems}, Oxford University Press, New York (2000).
%
\vskip3pt
\bibitem{ATo1} {\sc L. Ambrosio, V. M. Tortorelli} : On the approximation of free discontinuity problems, {\it Bollettino U. M. I} {\bf 7} (1992), 105--123.
%
\vskip3pt
\bibitem{ATo2} {\sc L. Ambrosio, V. M. Tortorelli} : Approximation of functionals depending on jumps by elliptic functionals by
$\Gamma$-convergence, {\it Comm. Pure Appl. Math.} {\bf 43} (1990), 999--1036.
%
\vskip3pt
\bibitem{AT} {\sc L. Ambrosio, P. Tilli} : {\it Topics on analysis in metric spaces}, Oxford Lecture Series in Mathematics and its Applications {\bf 25}, Oxford University Press, Oxford (2004).
%
\vskip3pt
\bibitem{BBH1} {\sc F. Bethuel, H. Brezis, F. H\'elein} : Asymptotics for the minimization of a Ginzburg-Landau functional, {\it Calc. Var. Partial Differential Equations} {\bf 1} (1993),  123--148.
%
 \vskip3pt
\bibitem{Bucur} {\sc B. Bogosel, D. Bucur, A. Giacomini} : Optimal shapes maximizing the Steklov eigenvalues,  preprint (avaible at \texttt{http://www.cvgmt.sns.it/}). 
 \vskip3pt
\bibitem{BOO} {\sc M. Bonafini, G. Orlandi, E. Oudet} : Variational approximation of functionals defined on $1$-dimensional connected sets: the planar case, preprint \texttt{arXiv:1610.03839}. 
%
\vskip3pt
\bibitem{BLS} {\sc M. Bonnivard, A. Lemenant, F. Santambrogio} : Approximation of length minimization problems among compact connected sets, {\it SIAM J. Math. Anal.} {\bf 47} (2015), 1489--1529.
%
\vskip3pt
\bibitem{BLM2} {\sc M. Bonnivard, A. Lemenant, V. Millot} : On the numerical analysis of a phase field approximation for the planar Steiner problem,  in preparation. 
%
\vskip3pt
\bibitem{BFM1} {\sc  B. Bourdin, G.A. Francfort, J.J. Marigo} : Numerical experiments in revisited brittle fracture, {\it J. Mech. Phys. Solids} {\bf 48} (2000), 797--826.
%
\vskip3pt
\bibitem{BFM2} {\sc  B. Bourdin, G.A. Francfort, J.J. Marigo} : The variational approach to fracture, {\it J. Elasticity} {\bf 9} (2008), 5--148.
%
\vskip3pt
\bibitem{BB} {\sc D. Bucur, G. Buttazzo} :  {\it Variational methods in shape optimization problems}, Progress in Nonlinear Differential Equations and their
              Applications {\bf 65}, Birkh\"auser Boston, Inc., Boston, MA, (2005).
%
\vskip3pt
\bibitem{BGH} {\sc G. Buttazzo, M. Giaquinta, S. Hildebrandt} : {\it One-dimensional variational problems}, Oxford Lecture Series in Mathematics and its Applications {\bf 15}, The Clarendon Press, Oxford University Press, New York (1998).
%
 \vskip3pt
\bibitem{chamb} {\sc A. Chambolle, B. Merlet, L. Ferrari} : A simple phase-field approximation of the Steiner problem in dimension two, preprint \texttt{arXiv:1609.00519}.
%
\vskip3pt
\bibitem{DPLM} {\sc T. De Pauw, A. Lemenant, V. Millot} : On sets minimizing their weighted length in uniformly convex separable Banach spaces, to appear in {\it Adv. Math.}
\vskip3pt
\bibitem{DLS} {\sc P. Dondl, A. Lemenant, S. Wojtowytsch} : Phase field models for thin elastic structures with topological constraint, to appear in {\it Arch. Ration. Mech. Anal.}, preprint \texttt{arXiv:1507.01856}. 
%
\vskip3pt
\bibitem{EvGa} {\sc L.C. Evans, R.F. Gariepy} : {\it Measure theory and fine properties of functions}, Studies in Advanced Mathematics, CRC Press, Boca Raton FL (1992).
%
\vskip3pt
\bibitem{GiTr} {\sc D. Gilbarg, N.S. Trudinger} : {\it Elliptic Partial Differential Equations of Second Order}, Classics in Mathematics, Springer-Verlag, Berlin (2001).
%
\vskip3pt
\bibitem{Gurt} {\sc M.E. Gurtin} : On a theory of phase transitions with interfacial energy, {\it Arch. Ration. Mech. Anal.} {\bf 87} (1984), 187--212.
%
\vskip3pt
\bibitem{Karp} {\sc R. Karp} : {\it Reducibility among combinatorial problems}, Complexity of Computer Computations, Plenum Press, (1972).
%
\vskip3pt
\bibitem{LS} {\sc A. Lemenant, F. Santambrogio} : A Modica-Mortola approximation for the Steiner problem, {\it C. R. Math. Acad. Sci. Paris} {\bf 352} (2014), 451--454.
%
\vskip3pt
\bibitem{MaSb} {\sc P. Marcellini, C. Sbordone} : Semicontinuity problems in the calculus of variations,  {\it Nonlinear Anal.} {\bf 4} (1980), 241--257. 
%
\vskip3pt
\bibitem{Mattila} {\sc P. Mattila} : {\it Geometry of Sets and Measures in Euclidean Spaces: Fractals and rectifiability}, Cambridge studies in advanced mathematics, Cambridge University Press, Cambridge (1995).

%
\vskip3pt
\bibitem{MeZi} {\sc N.G. Meyers, W.P. Ziemer} : Integral inequalities of Poincar\'e and Wirtinger type for $BV$-functions, {\it Amer. J. of Math.} {\bf 99} (1977), 1345--1360. 
%
\vskip3pt
\bibitem{Mod} {\sc L. Modica} : The gradient theory of phase transitions and the minimal interface criterion, {\it Arch. Rational Mech.
Anal.} {\bf 98} (1987), 123--142.
%
\vskip3pt
\bibitem{ModMort} {\sc L. Modica, S. Mortola} : Un esempio di $\Gamma$-convergenza, {\it Boll. Un. Mat. Ital. B} {\bf 14} (1977), 285--299.
%
\vskip3pt
\bibitem{MumSh} {\sc D. Mumford, J.Shah} : Optimal approximation by piecewise smooth functions and associated variational problems, {\it Comm. Pure Appl. Math.} {\bf 17} (1989),  577--685.
%
\vskip3pt
\bibitem{Zi} {\sc W.P. Ziemer} : {\it Weakly differentiable functions}, Graduate Texts in Mathematics, Springer-Verlag, New York (1989).
\end{thebibliography}
\end{document}